\documentclass[12pt]{article}
\usepackage{mathrsfs}
\usepackage{graphicx}
\usepackage{amssymb,amsmath,amsthm, amsfonts}
\usepackage{epsfig}
\usepackage{tikz}
\usepackage{mathrsfs}
\usepackage{setspace}
\usepackage{etoolbox}
\AfterEndEnvironment{proof}{\vspace{2mm}} 
\def\C{\mathop{\bf C\kern 0pt}\nolimits}
\def\DD{\mathop{\bf D\kern 0pt}\nolimits}
\def\K{\mathop{\bf K\kern 0pt}\nolimits}
\def\N{\mathop{\bf N\kern 0pt}\nolimits}
\def\Q{\mathop{\bf Q\kern 0pt}\nolimits}
\def\R{\mathop{\bf R\kern 0pt}\nolimits}

\newcommand{\beq}{\begin{equation}}
\newcommand{\eeq}{\end{equation}}
\newcommand{\ben}{\begin{eqnarray}}
\newcommand{\een}{\end{eqnarray}}
\newcommand{\beno}{\begin{eqnarray*}}
\newcommand{\eeno}{\end{eqnarray*}}

\newtheorem{theorem}{Theorem}[section]
\newtheorem{lemma}{Lemma}[section]

\newtheorem{proposition}[theorem]{Proposition}

\theoremstyle{remark}
\newtheorem{remark}{Remark}

\numberwithin{equation}{section}

\allowdisplaybreaks \numberwithin{equation} {section}

\begin{document}
\title{
Boundedness and exponential stabilization in a degenerate migration-consumption system with logistic damping
}
\author{Ai Huang$^{1}$, Jing Li$^{2}$, \ Yifu Wang$^{1\dag}$ \ \\[2mm]
 { \small $^{1}$School of Mathematics and Statistics, Beijing Institute of Technology}\\
  { \small Beijing 100081, People's Republic of China}\\
 { \small $^{2}$College of Science, Minzu University of China}\\
  { \small Beijing 100081, People's Republic of China}}
         \date{}
            \maketitle
\begin{abstract}
We study the no-flux problem for a taxis-type migration-consumption system with logistic damping in a bounded planar domain, where the motility function \(\phi\) vanishes at zero and degenerates algebraically of order \(\alpha\ge1\). We prove uniform boundedness in \(L^\infty(\Omega)\times W^{1,\infty}(\Omega)\) for every fixed damping coefficient \(b>0\), without any smallness assumption on the initial data, thus improving the previous \(L^2\)-boundedness results. For \(\alpha>1\), we further prove that the solution converges exponentially to the spatially homogeneous state \((a/b,0)\), with an explicit admissible rate. The proof combines a weighted Gagliardo--Nirenberg type
inequality adapted to the degeneracy with the dissipative structure of the signal equation. Therefore logistic damping drives the system toward homogeneity, despite the loss of uniform elliptic regularization.
\end{abstract}

\vskip   0.2cm \noindent{\bf Keywords:} Signal-dependent motility;  logistic damping;  $L^\infty$-bounds;
exponential stabilization.

\vskip   0.2cm\noindent{\bf Mathematics Subject Classification:} 35K65; 92C17; 35K55; 35Q92.

\vskip   0.2cm \footnotetext[1]{$^\dag$Corresponding author. E-mail addresses: wangyifu@bit.edu.cn}

\section{Introduction}
\setcounter{section}{1}\setcounter{equation}{0}

Chemotaxis, the directed movement of organisms along chemical gradients,
is a central mechanism in bacterial aggregation and spatial heterogeneity
(\cite{Colin,Maini}). A classical minimal description is the
Keller--Segel-type system
\begin{equation}\label{1.1a}
	\left\{\begin{array}{ll}
		u_{t}=\nabla \cdot (D(v)\nabla u)-\nabla \cdot (uS(v)\nabla v),\\[1mm]
		v_{t}=\Delta v +f(u,v),
	\end{array}
	\right.
\end{equation}
where \(u\) denotes the cell density and \(v\) the chemical signal.
When $D(v)$ and $S(v)$  are positive constants, the system reduces to the classical Keller-–Segel model,
whose behavior depends strongly
on the spatial dimension; see \cite{Bellomo} for a comprehensive account. When the signal is consumed, global boundedness in two dimensions and eventual
smoothness in three dimensions have been established
(\cite{Tao Winkler(2012)}), while in higher dimensions, global boundedness of
classical solutions has been established under smallness conditions
on \(\|v_0\|_{L^\infty(\Omega)}\) \cite{Tao2011}; see also the survey \cite{Lankeit}.
Logistic sources
have also been shown to suppress blow-up in various chemotaxis systems
(\cite{Lankeit Wang(2017), Osaki(2002), Winkler(2010CPDE), Zhang(2021)}).

In many bacterial populations, motility is not constant but depends on
the local signal concentration and may be suppressed at high signal
levels (\cite{Fu(2012),Liu2011}). This is captured by setting
\(D(v)=\phi(v)\) and \(S(v)=-\phi'(v)\) in \eqref{1.1a}, so that the
first equation becomes the Fokker--Planck type equation
\(u_t=\Delta(\phi(v)u)\). For signal-production models with various
motility functions---including bounded, algebraically decaying, and
exponentially decaying ones---global boundedness, critical mass
phenomena, and the preventive effect of logistic damping have been
extensively studied; see
\cite{Tao Winkler(2017),Xiao2023,Coon2017,Jin2020,Fujie2021,Jin2018,Xiao2024}
and references therein.

In consumption models, the nutrient \(v\) is actively degraded by the
cells via contact, driving its concentration toward zero.
Accordingly, the motility function \(\phi\) is assumed to vanish at
zero, \(\phi(0)=0\), reflecting the suppression of active movement at
low nutrient levels (\cite{Kawasaki,Leyva(2013)}). This degenerate
motility is also motivated by experiments and individual-level models
(\cite{Gude2020,CKL}). When \(\phi\) is positive on \([0,\infty)\),
solutions are known to stabilize toward spatially homogeneous steady
states (\cite{Li Zhao2021,Li Winkler2022,Li Winkler2022R}), while
logistic sources are known to suppress blow-up and improve regularity
(\cite{Li Wang Pan2021,Lv2020,Wang2023JDE,Wang2026}).

For degenerate systems without logistic damping,
global bounded or continuous solutions have been constructed for mild
degeneracies \(\phi(v)\simeq v\) in one and two dimensions (\cite{Winkler2024}), and global solvability has been
obtained for broader ranges of \(\alpha>0\) in generalized frameworks
(\cite{Winkler2022Moser,WinklerANS,WinklerZAMP}).
For
\(\phi(v)\simeq v^\alpha\) with \(\alpha\ge1\), weak-\(*\) stabilization
of the transformed density was established in \cite{Winkler(2023Non)}.
In contrast, the effect of logistic damping was studied in
\cite{Wang2025}, where uniform \(L^p\)-bounds were obtained in higher
dimensions for superquadratic degradation or sufficiently strong
quadratic damping.
However, for the quadratic source $au-bu^2$ in the
two-dimensional setting, the uniform \(L^\infty\)-boundedness of \(u\)
has remained open, with only \(L^2\)-bounds available so far
(\cite{Wang2025}).

The main difficulty stems from the degeneracy \(\phi(0)=0\), which
weakens the diffusion dissipation
\(\int_\Omega\phi(v)u^{p-2}|\nabla u|^2\) in the nutrient-depleted regime
\(v\simeq0\) toward which the system is expected to evolve. The
cross-diffusive term arising from \(\Delta(\phi(v)u)\) is indefinite in
sign and cannot be absorbed by this weakened dissipation. The logistic
damping \(-bu^2\) provides dissipation for \(u\) itself but does not
directly compensate for this loss of elliptic regularization. The
central question is whether it can nevertheless do so, and whether a
positive threshold on \(b\) is required.

\textbf{Main results.}
While logistic damping is known to suppress blow-up in related
chemotaxis systems
(\cite{Lankeit Wang(2017),Osaki(2002),Winkler(2010CPDE),Zhang(2021)}),
its effect in the present degenerate consumption system has remained
elusive; in particular, whether \(L^\infty\)-boundedness holds has been
an open question, to the best of our knowledge. To this end, we
consider the initial-boundary value problem
\begin{equation}\label{1.1}
	\left\{
	\begin{array}{ll}
		u_t=\Delta(\phi(v)u)+au-bu^2, & x\in\Omega,\ t>0,\\[2mm]
		v_t=\Delta v-vu, & x\in\Omega,\ t>0,\\[2mm]
		\partial_\nu u=\partial_\nu v=0, & x\in\partial\Omega,\ t>0,\\[2mm]
		u(\cdot,0)=u_0,\quad v(\cdot,0)=v_0, & x\in\Omega,
	\end{array}
	\right.
\end{equation}
where \(\Omega\subset\mathbb R^2\) is a bounded domain with smooth
boundary.
Here \(a>0\), \(b>0\), and the initial data \((u_0,v_0)\)
satisfy
\begin{equation}\label{1.8}
	\left\{\begin{array}{ll}
		u_0\in W^{1,\infty}(\Omega)\mbox{ is nonnegative with }u_0\not\equiv0,\\[1mm]
		v_0\in W^{1,\infty}(\Omega)\mbox{ with }v_0>0\mbox{ in }\overline{\Omega}.
	\end{array}
	\right.
\end{equation}
The motility function \(\phi\) is assumed to be algebraically
degenerate near zero in the sense that
\begin{equation}\label{1.5}
    \phi\in C^0([0,\infty))\cap C^3((0,\infty))\mbox{ is such that }\phi(0)=0\mbox{ and }\phi>0\mbox{ on }(0,\infty),
\end{equation}
and that for some $\alpha\geq 1$,
\begin{equation}\label{1.6}
	\liminf_{s\searrow0}\frac{\phi(s)}{s^\alpha}>0,\qquad
	\limsup_{s\searrow0}\frac{|\phi'(s)|}{s^{\alpha-1}}<\infty.
\end{equation}
The existence of a  classical solution to \eqref{1.1} was established in \cite{Wang2025}; see Proposition \ref{pro1.1} below.

Our first result shows that, despite the loss of ellipticity of $\Delta(\phi(v)u)$ as $v\to0$, the quadratic degradation term $ -bu^2$ is sufficient
to guarantee that any globally classical solution
remains bounded in \(L^\infty(\Omega)\times W^{1,\infty}(\Omega)\) for
every \(b>0\), thereby improving the previous \(L^2\)-bound obtained in
\cite[Lemma~3.7]{Wang2025}.

\begin{theorem}\label{theorem1.1}
Under assumptions \eqref{1.8}--\eqref{1.6}, any globally classical solution $(u,v)$ of problem \eqref{1.1} satisfies
\begin{equation}
    \|u(\cdot,t)\|_{L^{\infty}(\Omega)}+\|v(\cdot,t)\|_{W^{1,\infty}(\Omega)}\leq C~~\mbox{for all}~~t>0,
\end{equation}
where $C>0$ depends only on the initial data and parameters.
\end{theorem}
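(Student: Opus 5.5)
We will derive the bounds through a chain of a priori estimates, starting from the basic ones. First, $0<v\le\|v_0\|_{L^\infty(\Omega)}$ follows from the maximum principle. Integrating the first equation gives $\frac{d}{dt}\int_\Omega u\le a\int_\Omega u-\frac{b}{|\Omega|}(\int_\Omega u)^2$, hence $\sup_t\int_\Omega u<\infty$, together with the space-time bound $\int_t^{t+1}\int_\Omega u^2\le C$. Testing the second equation against $v$ and against $-\Delta v$ (in the planar setting, using $\int_t^{t+1}\int_\Omega u^2\le C$) yields $\int_t^{t+1}\int_\Omega |\Delta v|^2\le C$ and $\sup_t\int_\Omega|\nabla v|^2\le C$. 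The $L^2$-bound for $u$ from \cite[Lemma~3.7]{Wang2025} is also available as a starting point.

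The core step is an $L^p$ estimate for $u$ for every finite $p$, uniform in time. Testing the first equation with $u^{p-1}$ gives
\[
\frac1p\frac{d}{dt}\int_\Omega u^p+(p-1)\int_\Omega \phi(v)u^{p-2}|\nabla u|^2=-(p-1)\int_\Omega \phi'(v)u^{p-1}\nabla u\cdot\nabla v+a\int_\Omega u^p-b\int_\Omega u^{p+1}.
\]
We use Young's inequality to absorb half of the cross term into the weighted dissipation. This leaves $C_p\int_\Omega \frac{\phi'(v)^2}{\phi(v)}u^p|\nabla v|^2$, and by \eqref{1.6} we have $\frac{\phi'^2}{\phi}\le Cv^{\alpha-2}$ near $0$. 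Since $\alpha\ge1$, this weight can blow up like $v^{-1}$ when $\alpha=1$. To handle this, we couple the estimate with a functional of the signal, namely $\int_\Omega v^{-\beta}|\nabla v|^{2q}$ or $\int_\Omega |\nabla v|^{2q}/v^{q}$, i.e.\ estimates for $\nabla\sqrt v$ or $\nabla \log v$. These are well behaved for consumption systems because $w=-\log(v/\|v_0\|_{L^\infty(\Omega)})$ solves $w_t=\Delta w-|\nabla w|^2+u$. The dissipation of this functional contains terms such as $\int_\Omega v^{-\beta}|\nabla v|^{2q-2}|D^2v|^2$. We would then apply a weighted Gagliardo--Nirenberg inequality in two dimensions to bound $\int_\Omega v^{\alpha-2}u^p|\nabla v|^2$ by $\varepsilon\int_\Omega u^{p+1}+\varepsilon\,(\text{signal dissipation})+C$. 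The first term is absorbed by the logistic dissipation $-b\int_\Omega u^{p+1}$, which works for every $b>0$ since $\varepsilon$ is arbitrary. The remaining requirement is that the $u$-dependent source in the signal functional, roughly $\int_\Omega u\,v^{-\beta}|\nabla v|^{2q}$, is controlled by $\varepsilon\int_\Omega u^{p+1}$ plus the signal dissipation. Adding $\int_\Omega u^p+\int_\Omega u^{p}$ to both sides and using an ODE comparison then gives $\sup_t\|u\|_{L^p(\Omega)}\le C(p)$.

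This weighted absorption is the main obstacle. The degeneracy $\phi(v)\simeq v^\alpha$ means that the good dissipation $\int_\Omega\phi(v)u^{p-2}|\nabla u|^2$ cannot be used to control $u$ near $v=0$. Everything must therefore be carried by $-bu^{p+1}$ and by the dissipation of the signal functional, and the exponents must match with $\alpha\ge1$. My first attempt is to choose $q=p$, or $q$ proportional to $p$, and to use the pointwise inequality $|\nabla v|^4/v^{3}\lesssim$ (controlled by $|D^2\log v|^2 v$) together with the known identity $\int_\Omega v|D^2\log v|^2\ge c\int_\Omega|\nabla v|^4/v^3$.

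Once $u\in L^p$ is bounded uniformly for some $p>2$, standard Neumann heat-semigroup $L^p$--$L^q$ estimates applied to $v_t=\Delta v-uv$ give $\sup_t\|\nabla v\|_{L^\infty(\Omega)}\le C$. For $u$, we rewrite the first equation as $u_t=\phi(v)\Delta u+2\nabla\phi(v)\cdot\nabla u+\Delta\phi(v)\,u+au-bu^2$. Because $\phi(v)$ may degenerate, we do not use semigroup smoothing for $u$. Instead, we run a Moser--Alikakos iteration on the $L^p$ inequality above, with $p=2^k$ and constants growing at most geometrically in $p$. Here $|\nabla v|$ is now bounded, and the weight $\phi'^2/\phi\lesssim v^{\alpha-2}$ is handled as in the previous step. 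Alternatively, we use that $\|u\|_{L^p}\le C$ for all $p$ with $C$ independent of $p$. Together with the control of $\|v\|_{W^{1,\infty}(\Omega)}$ obtained above, this yields the statement of Theorem \ref{theorem1.1}.
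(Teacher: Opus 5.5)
Your architecture is the paper's: couple $\int_\Omega u^p$ with a singular gradient functional of $v$, then run a Moser iteration. However, the mechanism you propose for the $L^p$ step fails for general $b>0$. It breaks exactly at ``this works for every $b>0$ since $\varepsilon$ is arbitrary''.

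Count powers of $u$ for the functional $\int_\Omega v^{1-q}|\nabla v|^q$ (any other $v$-weight gives the same $u$-count). Its dissipation is $\int_\Omega v^{-q-1}|\nabla v|^{q+2}$, plus a $D^2v$-term of the same homogeneity. Its genuinely bad coupling terms have the form $u\,v^{2-q}|\nabla v|^{q-2}|D^2v|$. After Young they leave the source $\int_\Omega u^{\frac{q+2}{2}}v$, which $-b\int_\Omega u^{p+1}$ absorbs for arbitrary $b$ only if $q<2p$.

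The cross term behaves the opposite way. Splitting $\int_\Omega u^pv^{\alpha-2}|\nabla v|^2$ against that dissipation leaves $\int_\Omega u^{p+\frac{2p}{q}}v^{\frac{\alpha(q+2)-2}{q}}$. Splitting it against $\int_\Omega u\,v^{1-q}|\nabla v|^q$ instead leaves $u^{\frac{pq-2}{q-2}}$. Either one is absorbed for arbitrary $b$ only if $q>2p$.

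So the two requirements are incompatible, and you are pinned at $q=2p$. There both contributions are fixed multiples of $\int_\Omega u^{p+1}$. Shrinking the cross-term share requires a large multiple of the signal functional, and that multiplies the source $\int_\Omega u^{p+1}v$ by the same factor. The outcome is a largeness condition on $b$ in terms of $p$ and $\|v_0\|_{L^\infty(\Omega)}$. That is precisely the restriction of \cite{Wang2025} which the theorem removes. Heuristically, $b$ is invariant under $u\mapsto\lambda^2u(\lambda x,\lambda^2t)$, $v\mapsto v(\lambda x,\lambda^2t)$, so logistic damping alone cannot dominate the taxis term.

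You also misread the structure. The term $\int_\Omega u\,v^{1-q}|\nabla v|^q$ sits on the left of \eqref{3.1**} with coefficient $(q-1)^2$. It is a gain, not a source to be controlled.

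The missing idea is the one you dismiss: the degenerate dissipation $\int_\Omega u^{p-2}v^\alpha|\nabla u|^2$ is usable, provided the weight $v^\alpha$ stays on the $u$-power. Using $v\le\|v_0\|_{L^\infty(\Omega)}$ and $\alpha\ge1$, the paper reduces the cross term to $\varepsilon\int_\Omega v^{-q-1}|\nabla v|^{q+2}+C\int_\Omega u^{\frac{p(q+2)}{q}}v^\alpha$. It then applies the weighted Gagliardo--Nirenberg inequality for $u^{p/2}v^{\alpha/2}$ (Lemmas \ref{le3.1} and \ref{le3.2}). That inequality is fed with the $L^2$-bound for $u$ and with the gain $\int_\Omega u\,v^{1-q}|\nabla v|^q$, which via Young handles the $\int_\Omega u^pv^{\alpha-2}|\nabla v|^2$ produced by $|\nabla(u^{p/2}v^{\alpha/2})|^2$. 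The result controls $\int_\Omega u^\beta v^\alpha$ for all $\beta<p+2$, beyond the logistic exponent. This opens the window $p+2<q<2p$, where $\frac{q+2}{2}<p+1$ and $\frac{p(q+2)}{q}<p+2$ hold simultaneously, for every $b>0$.

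Your iteration has the same hole. With cross-term coefficients of size $p^2$, absorption into $-bp\int_\Omega u^{p+1}$ yields only $\|u\|_{L^p(\Omega)}\lesssim p$. So diffusion must enter, and here it can enter only through $\int_\Omega|\nabla(u^{p/2}v^{\alpha/2})|^2$. Moreover, for $1\le\alpha<2$ the weight $v^{\alpha-2}$ is singular, so $\|\nabla v\|_{L^\infty(\Omega)}\le C$ does not tame $\int_\Omega u^pv^{\alpha-2}|\nabla v|^2$. The paper first proves $|\nabla v|\le Cv^{1-r}$ (Lemma \ref{lem3.5}, obtained from the $L^p$ step with large $q$ and the equation for $v^r$). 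This reduces the weight to $v^{\alpha-2r}$. It then interpolates $u^{p_k}v^{\alpha-2r}$ between $u^{p_k/2}v^{\alpha/2}$ and $u^{p_k/2}$ to reach $N_k\le\rho^kN_{k-1}^2+\rho^k\tau^{2^k}$. Your fallback, ``$\|u\|_{L^p}\le C$ with $C$ independent of $p$'', simply restates the $L^\infty$ claim.
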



The proof of Theorem \ref{theorem1.1} relies on a bootstrap argument.
The main difficulty is the indefinite-sign cross-diffusive term
\(\phi'(v)u^{p-1}\nabla v\cdot\nabla u\), which cannot be absorbed by
the standard dissipation because \(\phi(v)\) degenerates as \(v\to 0\).
To overcome this, we develop a weighted Gagliardo--Nirenberg type
inequality adapted to the degenerate structure (Lemma \ref{le3.2}).
This inequality controls mixed integrals \(\int_\Omega u^\beta v^\alpha\)
by the \(u\)-related dissipation and the weighted gradient dissipation
of \(v\) from Lemma \ref{lemma3.3}. Starting from the \(L^2\)-bound in
Lemma \ref{L2bound}, we first establish uniform \(L^p\)-bounds for \(u\)
for all \(p>2\). Then, using the pointwise gradient estimate
\(|\nabla v|\le C v^{1-r}\) from Lemma \ref{lem3.5}, we perform a
Moser-type iteration to obtain the desired \(L^\infty\)-bound for \(u\).

The a priori estimates from Theorem~\ref{theorem1.1} also allow us to
study the large-time behavior of solutions to \eqref{1.1}. In contrast
to the degenerate case without logistic damping, where only weak or
weak-\(*\) stabilization is known \cite{Winkler(2023Non)}, we show below
that the quadratic logistic damping is strong enough to force strong
exponential convergence.

 \begin{theorem}\label{theorem1.2}
Assume that \eqref{1.8}--\eqref{1.6} hold with \(\alpha>1\).
Then there exist constants \(C_v>0\) and \(\lambda>0\) such that
\begin{equation}\label{1.10}
\|v(\cdot,t)\|_{W^{1,\infty}(\Omega)}
\leq C_v e^{-\lambda t}
\qquad\text{for all }t>0.
\end{equation}
Moreover, for every \(1\leq p<\infty\), there exist constants
\(C_p>0\) and \(\eta_p>0\) such that
\begin{equation}\label{1.11a}
\left\|u(\cdot,t)-\frac{a}{b}\right\|_{L^p(\Omega)}
\leq C_p e^{-\eta_p t}
\qquad\text{for all }t>0.
\end{equation}
In particular,
\begin{equation}\label{1.12}
u(\cdot,t)\overset{*}{\rightharpoonup}\frac{a}{b}
\quad\text{in }L^\infty(\Omega)
\quad\text{as }t\to\infty.
\end{equation}
\end{theorem}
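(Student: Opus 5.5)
The plan is to first obtain exponential decay of $v$, then transfer it to $u$ through the logistic structure, and finally upgrade all rates with the uniform bounds of Theorem \ref{theorem1.1}.

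\textbf{Decay of $v$.} Since $v_t=\Delta v-uv$ and $u\ge0$, the comparison principle gives $0<v\le\|v_0\|_{L^\infty(\Omega)}$ with $v$ nonincreasing in its maximum. To get a rate, I need $u$ to be bounded below in an averaged sense. Integrating the $u$-equation gives
\[
\frac{d}{dt}\int_\Omega u=a\int_\Omega u-b\int_\Omega u^2\ge a\int_\Omega u-\frac{b}{|\Omega|}\cdot C\Big(\int_\Omega u\Big)\|u\|_{L^\infty(\Omega)}.
\]
This is too crude, so instead I will use
\[
\frac{d}{dt}\int_\Omega\ln u=\int_\Omega\frac{\phi(v)|\nabla u|^2}{u^2}+\ldots
\]
The cleanest route is likely the ODE comparison $\frac{d}{dt}\int_\Omega u\ge \int_\Omega u\,(a-b\|u\|_{L^\infty(\Omega)})$ combined with $\int_\Omega u\ge$ positive for all time. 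Concretely, I will show $\liminf_{t\to\infty}\int_\Omega u\ge c>0$ by contradiction: on intervals where $\int_\Omega u$ is small, $L^\infty$ bounds plus parabolic Hölder regularity (Theorem \ref{theorem1.1} and the bounds for $\nabla v$) imply $u$ is uniformly small, so $\frac{d}{dt}\int_\Omega u\ge\frac a2\int_\Omega u$.

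Then test the $v$-equation with $v$:
\[
\frac12\frac{d}{dt}\int_\Omega v^2+\int_\Omega|\nabla v|^2+\int_\Omega uv^2=0.
\]
Writing $v^2=(v-\bar v)^2+\ldots$, Poincaré and $\int_\Omega u\ge c$ together with $\|u\|_{L^\infty(\Omega)}\le C$ give $\int_\Omega|\nabla v|^2+\int_\Omega uv^2\ge\kappa\int_\Omega v^2$. This is a standard lemma: if $\int_\Omega v^2$ were not controlled, then $v$ would be close to its mean and $\int_\Omega uv^2\approx\bar v^2\int_\Omega u$. Hence $\|v\|_{L^2(\Omega)}\le Ce^{-\kappa t}$. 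Interpolating with the uniform $W^{1,\infty}$ bound, and using semigroup smoothing for $v_t=\Delta v-uv$ with bounded $u$, upgrades this to $\|v\|_{W^{1,\infty}(\Omega)}\le C_ve^{-\lambda t}$. Gradient decay comes from $\|\nabla v\|_{L^\infty(\Omega)}\le C\|v(\cdot,t-1)\|_{L^\infty(\Omega)}$ via Neumann heat semigroup estimates.

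\textbf{Convergence of $u$.} I will use the Lyapunov-type functional
\[
F(t)=\int_\Omega\Big(u-\frac ab-\frac ab\ln\frac{bu}{a}\Big).
\]
Its derivative is
\[
F'=-b\int_\Omega\Big(u-\frac ab\Big)^2-\frac ab\int_\Omega\nabla\frac1u\cdot\ldots
\]
More precisely, the diffusion term equals
\[
\frac ab\int_\Omega\frac{\nabla u\cdot\nabla(\phi(v)u)}{u^2}=\frac ab\int_\Omega\phi(v)\frac{|\nabla u|^2}{u^2}+\frac ab\int_\Omega\phi'(v)\frac{\nabla u\cdot\nabla v}{u}.
\]
This sign is wrong for dissipation (the $\ln$-term has a minus sign), so I must recheck it. Because $-\frac ab\ln u$ is convex, the contribution of $-\frac ab\int_\Omega\frac{\Delta(\phi u)}{u}$ is $-\frac ab\int_\Omega\frac{\nabla u\cdot\nabla(\phi u)}{u^2}$, which gives the good term $-\frac ab\int_\Omega\phi|\nabla u|^2/u^2$ plus a cross term. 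By Young's inequality the cross term is bounded by half the good term plus
\[
C\int_\Omega\frac{\phi'(v)^2}{\phi(v)}|\nabla v|^2.
\]
This is where $\alpha>1$ enters: by \eqref{1.6}, $\phi'^2/\phi\lesssim v^{\alpha-2}$. I then need $v^{\alpha-2}|\nabla v|^2$ to decay exponentially, and this is the main obstacle when $\alpha<2$. I will use the pointwise estimate $|\nabla v|\le Cv^{1-r}$ of Lemma \ref{lem3.5} with $r$ small, which gives
\[
v^{\alpha-2}|\nabla v|^2\le Cv^{\alpha-2r}\le Ce^{-(\alpha-2r)\lambda' t}
\]
when $r<\alpha/2$. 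Positivity of $u$ is also needed for $F$ to be finite; a positive lower bound on $u$ for $t\ge1$ should follow from the strong maximum principle, or one can start $F$ at $t=1$. Then $F'\le-b\int_\Omega(u-a/b)^2+Ce^{-\mu t}$, and since $F\le C\int_\Omega(u-a/b)^2$ on bounded ranges of $u$ (it is quadratic near $a/b$), we get $F\le Ce^{-\mu' t}$. Hence $\|u-a/b\|_{L^2(\Omega)}$ decays exponentially; the $L^p$ decay for all finite $p$ follows by interpolation with the $L^\infty$ bound, and \eqref{1.12} follows by density.
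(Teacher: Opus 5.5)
Your $u$-step has a genuine gap: the argument does not close. You claim $F\le C\int_\Omega(u-a/b)^2$ ``on bounded ranges of $u$'', but the only range you control is $0<u\le M$. There the density $u-u_*-u_*\ln(u/u_*)$, with $u_*=a/b$, tends to $+\infty$ as $u\searrow0$, while $(u-u_*)^2\le u_*^2$. The comparison therefore needs a bound $u\ge\delta>0$ uniform in $t$, and nothing supplies it. The strong maximum principle gives only $u(\cdot,t)>0$ for each fixed $t$. A minimum-principle argument for $u_t=\phi(v)\Delta u+2\phi'(v)\nabla v\cdot\nabla u+(\Delta\phi(v)+a-bu)u$ would need control of $\Delta\phi(v)=\phi'(v)\Delta v+\phi''(v)|\nabla v|^2$, i.e.\ of $\phi''$ near $0$ and of $\Delta v$ in $L^\infty$, and neither is available. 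So $F'\le-b\int_\Omega(u-u_*)^2+Ce^{-\mu t}$ yields only $\int_1^\infty\int_\Omega(u-u_*)^2<\infty$, with no rate. The structural reason is that the logistic dissipation $b(u-u_*)^2$ of the logarithmic entropy stays bounded as $u\to0$ while the entropy density blows up. The functional $\frac12\int_\Omega(u-u_*)^2$ fails for the mirror reason: its dissipation $b\int_\Omega u(u-u_*)^2$ vanishes at $u=0$. The missing idea is the paper's reciprocal-power functional $E(t)=\int_\Omega\Psi(u/u_*)$ with $\Psi(s)=s^{-4}+4s-5$. The identity $4(s^{-4}-s)(1-s)-2\Psi(s)=2(s-1)^4(2s^2+2s+1)s^{-4}\ge0$ shows that the logistic part contributes at most $-2aE$ for \emph{all} $s>0$. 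The cross-diffusive part is bounded by $5\int_\Omega s^{-4}\frac{|\phi'(v)|^2}{\phi(v)}|\nabla v|^2\le5G(t)(E+5|\Omega|)$ with $G(t)\le ce^{-(\alpha-2r)\lambda t}$; this is where your estimate $|\nabla v|\le Cv^{1-r}$ with $r<\alpha/2$ belongs. Hence $E'+(2a-5G)E\le25|\Omega|G$, so $E$ decays exponentially. Since the logistic dissipation now controls $E$ itself, only the harmless lower bound $\Psi(s)\ge10M^{-6}(s-1)^2$ on $(0,M]$ is needed to transfer the decay to $L^2$.

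Your $v$-step has a gap of its own. The bound $\liminf_{t\to\infty}\int_\Omega u>0$ cannot come from ``parabolic H\"older regularity'': the diffusion coefficient $\phi(v)$ degenerates as $v\to0$, and Theorem~\ref{theorem1.1} gives no uniform-in-time modulus of continuity for $u$ (the paper leaves even strong $L^\infty$-convergence open). So a small value of $\int_\Omega u$ does not force $\|u\|_{L^\infty(\Omega)}$ to be small. The repair, as in Lemmas~\ref{lem4.1} and \ref{lem4.2}, is to test the $v$-equation with $v^{\alpha-1}$, which gives $\int_0^\infty\int_\Omega v^{\alpha-2}|\nabla v|^2\le\frac{1}{\alpha(\alpha-1)}\int_\Omega v_0^\alpha$. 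This is the first use of $\alpha>1$, and it presupposes no decay of $v$, so it avoids the circularity of borrowing exponential decay from your second step. Your own entropy inequality then gives $F(t)\le C$ for $t\ge1$, hence a lower bound for $\int_\Omega\ln(u/u_*)$, and Jensen's inequality yields $\int_\Omega u\ge\kappa>0$. With this in hand the rest of your $v$-step is correct and differs slightly from the paper. Your coercivity bound $\int_\Omega|\nabla v|^2+\int_\Omega uv^2\ge\kappa'\int_\Omega v^2$ uses only $\int_\Omega u\ge\kappa$ and $\|u\|_{L^\infty(\Omega)}\le C$, so it bypasses the $L^2$-convergence $u\to a/b$ of Lemma~\ref{lemma4.3}. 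Your semigroup estimate $\|\nabla v(\cdot,t)\|_{L^\infty(\Omega)}\le C\|v(\cdot,t-1)\|_{L^\infty(\Omega)}$ transfers the decay rate of $\|v\|_{L^\infty(\Omega)}$ to $\nabla v$ without the loss factor $1-r$ incurred by $|\nabla v|\le Cv^{1-r}$.
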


The proof of Theorem~\ref{theorem1.2}
proceeds via a two-step relaxation mechanism. First,
the weighted dissipation of \(v\) and a relative entropy estimate give
\(u(\cdot,t)\to a/b\) in \(L^2(\Omega)\), which forces \(v\) to decay
exponentially; the uniform estimates from Theorem \ref{theorem1.1} upgrade
this to \(W^{1,\infty}(\Omega)\). Second, the exponential decay of \(v\)
makes the cross-diffusive term a small perturbation of the logistic damping.
In Lemma \ref{lem4.5}, the reciprocal-power functional
\(\Psi(s)=s^{-4}+4s-5\) is used to show that
\(E(t)=\int_\Omega\Psi(u/(a/b))\) decays exponentially, yielding exponential
convergence of \(u\) to \(a/b\) in every finite \(L^p\)-norm. The weak-\(*\)
limit in \(L^\infty(\Omega)\) then follows from the uniform \(L^\infty\)-bound
 together
with the strong \(L^1\)-convergence.

\begin{remark}\label{rem:rate}
	The proof provides an admissible choice of the decay rate in
	\eqref{1.11a}. More precisely, fix
	\(r\in(0,\min\{1,\alpha/2\})\), and let \(\lambda>0\) be the
	decay rate furnished by Lemma~\ref{lemma4.4}. Then, for any
	$
	0<\mu<\frac12\min\{2a,(\alpha-2r)\lambda\}$,	one may choose
	\(\eta_p=\mu\) when \(1\le p<2\) and
	\(\eta_p=2\mu/p\) when \(2\le p<\infty\).
	
\end{remark}

\begin{remark}\label{rem:macroscopic}
Theorem~\ref{theorem1.2} excludes persistent macroscopic patterns only in the following sense: since \(u(\cdot,t)\to a/b\) in \(L^p(\Omega)\) for every \(1\le p<\infty\), any \(O(1)\)-amplitude non-uniformity must be supported on sets of vanishing measure. However, small-measure heterogeneities are not excluded, and strong \(L^\infty\)-convergence remains open.
\end{remark}

The rest of this paper is organized as follows. Section 2 collects preliminary estimates and functional inequalities. In Section 3, we establish the uniform $L^\infty$-boundedness of solutions via a Moser-type iteration. Section 4 establishes the large-time behavior and exponential convergence
through integrability arguments, dissipative energy estimates and interpolation.
\vskip3mm

\section{Preliminaries}
In this section, we collect some preliminary results that will be used throughout the paper.
In the sequel, unless otherwise stated, $(u,v)$ denotes a classical solution to \eqref{1.1}, the existence of which is warranted by the following result.

\begin{proposition}(\cite[Theorem~1.2]{Wang2025})\label{pro1.1}
	Let $\Omega\subset\mathbb{R}^2$ be a bounded domain with smooth boundary. Assume that $\alpha\geq 1$, $\phi$
 satisfies \eqref{1.5}--\eqref{1.6}, and $(u_0,v_0)$ fulfills \eqref{1.8}. Then for $a>0$ and $b>0$,  problem \eqref{1.1} admits
  a  classical solution  $(u,v)\in (C^0(\overline\Omega\times [0,\infty))\cap  C^{2,1}(\overline\Omega \times (0,\infty)))^2$, where $u(x,t)>0$ and $v(x,t)>0$ for all $x\in \overline \Omega$ and $t>0$.
\end{proposition}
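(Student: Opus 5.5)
The statement is quoted from \cite[Theorem~1.2]{Wang2025}. If I had to reprove it, I would use a continuation argument whose key point is that the degeneracy \(\phi(0)=0\) is never reached in finite time.

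\textbf{Step 1: local existence.} Since \(v_0>0\) on \(\overline\Omega\), the equation \(u_t=\Delta(\phi(v)u)+au-bu^2\) is uniformly parabolic as long as \(v\) stays bounded away from zero. I would write it as
\[
u_t=\phi(v)\Delta u+2\phi'(v)\nabla v\cdot\nabla u+\Delta(\phi(v))\,u+au-bu^2 .
\]
A Banach fixed point argument in a space such as \(C^0([0,T];W^{1,\infty}(\Omega))\) then gives a local classical solution, following Amann's theory or the standard approach for signal-dependent motility. It comes with the extensibility criterion: if \(T_{\max}<\infty\), then
\[
\limsup_{t\nearrow T_{\max}}\Big(\|u(\cdot,t)\|_{L^\infty(\Omega)}+\|v(\cdot,t)\|_{W^{1,\infty}(\Omega)}+\|1/v(\cdot,t)\|_{L^\infty(\Omega)}\Big)=\infty .
\]
Positivity of \(u\) follows from the strong maximum principle applied to the linear equation for \(u\), using \(u_0\not\equiv 0\). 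Positivity of \(v\) and the bound \(v\le\|v_0\|_{L^\infty(\Omega)}\) follow from comparison, since \(u\ge0\).

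\textbf{Step 2: bounds on finite time intervals.} Fix \(T<\infty\) with \(T\le T_{\max}\).
\begin{itemize}
\item Integrating the first equation gives \(\frac{d}{dt}\int_\Omega u=a\int_\Omega u-b\int_\Omega u^2\). Hence \(\int_\Omega u\) is bounded and \(\int_0^T\!\!\int_\Omega u^2\le C(T)\).
\item To bound \(1/v\), set \(w:=-\ln(v/\|v_0\|_{L^\infty(\Omega)})\ge0\). It satisfies \(w_t=\Delta w-|\nabla w|^2+u\le\Delta w+u\), so
\[
w(\cdot,t)\le e^{t\Delta}w_0+\int_0^t e^{(t-s)\Delta}u(\cdot,s)\,ds .
\]
If \(u\in L^\infty((0,T);L^{p}(\Omega))\) for some \(p>1\), the two-dimensional smoothing estimate \(\|e^{\tau\Delta}f\|_{L^\infty}\le C(1+\tau^{-1/p})\|f\|_{L^p}\) makes the integral finite. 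This yields \(v\ge c(T)>0\) on \([0,T]\).
\item To get such a spatial \(L^p\) bound, I would use the quasi-energy argument of \cite[Lemma~3.7]{Wang2025} for the \(L^2\) bound. Testing the \(u\)-equation against \(u\) produces the dissipation \(\int_\Omega\phi(v)|\nabla u|^2\) and the cross term \(\int_\Omega\phi'(v)u\nabla v\cdot\nabla u\). The cross term is absorbed using \(|\phi'(v)|^2/\phi(v)\lesssim v^{\alpha-2}\), which follows from \eqref{1.6}, together with the weighted gradient estimate for \(v\) coming from the second equation. The \(-bu^3\) term controls the remainder.
\end{itemize}

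\textbf{Step 3: closing the argument.} Once \(v\ge c(T)\) on \([0,T]\), the \(u\)-equation is uniformly parabolic there with bounded coefficients. Standard \(L^p\)–Moser iteration, together with maximal Sobolev regularity for the \(v\)-equation, then gives \(u\in L^\infty\) and \(v\in W^{1,\infty}\) up to time \(T\). This contradicts the extensibility criterion, so \(T_{\max}=\infty\). The regularity \(C^{2,1}(\overline\Omega\times(0,\infty))\) follows from parabolic Schauder theory.

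\textbf{Main obstacle.} The hardest step is breaking the circularity in Step 2. The lower bound on \(v\) needs integrability of \(u\), while the usual energy estimates for \(u\) need nondegeneracy. I would first try the logarithmic transformation of \(v\), combined with the finite-time \(L^2\) estimate that relies only on the \(\phi'^2/\phi\) structure and the logistic term. Neither of these requires a positive lower bound on \(v\).
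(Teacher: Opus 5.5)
The paper does not prove Proposition~\ref{pro1.1}. It imports it from \cite[Theorem~1.2]{Wang2025}, so there is no in-text argument to compare yours against. Judged on its own, your outline is a sound reconstruction, and it is a genuinely different, cheaper route than anything the paper could offer.

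The load-bearing step is the one you identify. The finite-time $L^2$ bound comes out exactly as in Lemma~\ref{L2bound}. That proof uses only three things: $v>0$ together with the structure bound $|\phi'(v)|^2/\phi(v)\lesssim v^{\alpha-2}$; the dissipation of $\int_\Omega|\nabla v|^2/v$ from Lemma~\ref{lemma3.3} with $q=2$; and the bound $\int_0^T\!\int_\Omega u^2\le C(T)$ from the mass identity. On a bounded interval the inequality $y'\le yg$ with $g\in L^1(0,T)$ already closes the argument. So the logistic term enters only through that space--time bound, and $-bu^3$ is not needed. None of this requires $v\ge c>0$, so the circularity is indeed broken.

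Your insistence on $L^\infty((0,T);L^p(\Omega))$ with $p>1$ for the logarithmic transform is also the right call. With only $\int_0^T\!\int_\Omega u^2<\infty$ you would need $\tau^{-1/2}\in L^2(0,T)$, which fails, so in two dimensions the space--time bound alone is borderline.

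One minor imprecision: at the start of Step~3 the drift coefficient $\phi'(v)\nabla v$ is not yet bounded. At that point it lies only in $L^\theta$ for $\theta<\infty$, by Lemma~\ref{le2.5}. It becomes bounded after the first $L^p$ round of your bootstrap, which is routine in 2D.

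As for what each route buys: the Section~3 estimates (Lemmas~\ref{L2bound}, \ref{le3.6}, \ref{lem3.5}, \ref{le3.8Moser}) also never use a lower bound on $v$. They hold on $(0,T_{\max})$ with constants independent of $T_{\max}$. Combined with a local result whose extensibility criterion involves only $\|u\|_{L^\infty(\Omega)}$, they would give global existence directly. Positivity of $v$ is then automatic from $v\ge(\min_{\overline\Omega}v_0)e^{-\int_0^t\|u\|_{L^\infty}}$. Your detour through $v\ge c(T)$ and nondegenerate parabolic theory needs only $T$-dependent bounds and avoids the weighted Gagliardo--Nirenberg machinery of Lemma~\ref{le3.2}. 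That is all existence requires. The paper's machinery is heavier but yields uniform-in-time bounds as a by-product.
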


To exploit the singularly weighted dissipation rates arising from the evolution of $v$, we shall frequently make use of the following inequalities.

\begin{lemma}(\cite{WinklerDCDSB})\label{le2.3}
	Let $ q\geq2$, and $\varphi\in C^2(\overline{\Omega})$ be positive  satisfying $\frac{\partial\varphi}{\partial\nu}=0$ on $\partial\Omega$. Then
	\begin{equation*}
		\begin{split}			\int_{\Omega}\varphi^{-q-1}|\nabla\varphi|^{q+2}\leq(q+\sqrt{2})^2
\int_{\Omega}\varphi^{-q+3}|\nabla\varphi|^{q-2}|D^2\ln\varphi|^2
			\end{split}
	\end{equation*}
	and
	\begin{equation*}
		\begin{split}
			\int_{\Omega}\varphi^{-q+1}|\nabla\varphi|^{q-2}|D^2\varphi|^2\leq(q+\sqrt{2}+1)^2\int_{\Omega}\varphi^{-q+3}|\nabla\varphi|^{q-2}|D^2\ln\varphi|^2.
		\end{split}
	\end{equation*}
\end{lemma}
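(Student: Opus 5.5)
The plan is to prove both inequalities of Lemma \ref{le2.3} by a pointwise identity for $\ln\varphi$, followed by a single integration by parts that uses the Neumann condition. Set $w=\ln\varphi$, so that $\nabla\varphi=\varphi\nabla w$ and
\[
D^2\varphi=\varphi\bigl(D^2w+\nabla w\otimes\nabla w\bigr).
\]
In terms of $w$, the left-hand side of the first inequality becomes $\int_\Omega \varphi\,|\nabla w|^{q+2}$, and the right-hand side becomes $(q+\sqrt2)^2\int_\Omega \varphi\,|\nabla w|^{q-2}|D^2w|^2$. It therefore suffices to prove
\[
\int_\Omega \varphi|\nabla w|^{q+2}\le (q+\sqrt2)^2\int_\Omega\varphi|\nabla w|^{q-2}|D^2w|^2 .
\]

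For this, I will write $\varphi|\nabla w|^{q+2}=|\nabla w|^q\,\nabla\varphi\cdot\nabla w$ and integrate by parts. Since $\partial_\nu\varphi=0$, the boundary term vanishes, which gives
\[
\int_\Omega\varphi|\nabla w|^{q+2}=-\int_\Omega\varphi\,|\nabla w|^q\Delta w-q\int_\Omega \varphi|\nabla w|^{q-2}\,\nabla w\cdot D^2w\nabla w .
\]
In two dimensions $|\Delta w|\le\sqrt2|D^2w|$, and $|\nabla w\cdot D^2w\nabla w|\le|\nabla w|^2|D^2w|$. Hence the right-hand side is at most
\[
(q+\sqrt2)\int_\Omega\varphi|\nabla w|^q|D^2w| .
\]
By Cauchy--Schwarz, with the splitting $\varphi^{1/2}|\nabla w|^{(q+2)/2}\cdot\varphi^{1/2}|\nabla w|^{(q-2)/2}|D^2w|$, this is bounded by
\[
(q+\sqrt2)\Bigl(\int_\Omega\varphi|\nabla w|^{q+2}\Bigr)^{1/2}\Bigl(\int_\Omega\varphi|\nabla w|^{q-2}|D^2w|^2\Bigr)^{1/2}.
\]
Dividing through and squaring then yields the first inequality. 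The hypothesis $q\ge2$ guarantees that $|\nabla w|^{q-2}$ and $|\nabla w|^q$ are continuous and that the integration by parts is legitimate; for $q=2$ no singularity at $\nabla w=0$ arises, and in general the term $|\nabla w|^q$ is $C^1$.

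For the second inequality, I use the identity $D^2\varphi=\varphi(D^2w+\nabla w\otimes\nabla w)$ to write
\[
\varphi^{-q+1}|\nabla\varphi|^{q-2}|D^2\varphi|^2=\varphi\,|\nabla w|^{q-2}\,|D^2w+\nabla w\otimes\nabla w|^2 .
\]
Applying Minkowski's inequality in the weighted $L^2$ norm gives
\[
\Bigl(\int_\Omega\varphi^{-q+1}|\nabla\varphi|^{q-2}|D^2\varphi|^2\Bigr)^{1/2}\le\Bigl(\int_\Omega\varphi|\nabla w|^{q-2}|D^2w|^2\Bigr)^{1/2}+\Bigl(\int_\Omega\varphi|\nabla w|^{q+2}\Bigr)^{1/2}.
\]
By the first step, the second term on the right is at most $(q+\sqrt2)$ times the first. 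This produces the constant $(q+\sqrt2+1)^2$ after squaring.

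The only delicate point is the integration by parts with the weight $|\nabla w|^q$, together with keeping the sharp constants $\sqrt2$ (from $n=2$) and $q$. Because $\varphi$ is positive and $C^2$, $w$ is $C^2$ and no regularization is needed when $q\ge2$. I therefore do not expect a genuine obstacle: the main care lies in tracking the constants exactly.
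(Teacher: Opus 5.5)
Your proof is correct and complete. The paper gives no argument of its own but imports the lemma from \cite{WinklerDCDSB}, and your route is the standard argument for this estimate and reproduces the constants $(q+\sqrt2)^2$ and $(q+\sqrt2+1)^2$ exactly: rewrite everything in $w=\ln\varphi$, integrate $|\nabla w|^q\nabla w\cdot\nabla\varphi$ by parts using $\partial_\nu w=0$, bound with $|\Delta w|\le\sqrt2|D^2w|$ and Cauchy--Schwarz, and finally apply Minkowski's inequality to $D^2\varphi=\varphi(D^2w+\nabla w\otimes\nabla w)$.
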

We begin with some basic properties of $(u,v)$.
\begin{lemma}\label{le2.4}
Let $\Omega\subset\mathbb R^2$ be a bounded domain with smooth boundary, and let $a,b>0$. Under the
assumptions \eqref{1.8}--\eqref{1.6},  the following estimates hold:
		\begin{equation}\label{2.1}
		\int_{\Omega}u(\cdot,t)\leq m:=\max\{\int_{\Omega} u_0,\frac{a|\Omega|}{b}\}~~~~\mbox{for all}~t\in(0,\infty).
	\end{equation}
	\begin{equation}\label{2.5*}
		\int_t^{t+1}\int_{\Omega}u^2\leq\frac{(a+1)m}{b}~~~~\mbox{for all}~t\in(0,\infty).
	\end{equation}
as well as
	\begin{equation} \label{2.2}
		v(x,t)\leq\|v_{0}\|_{L^{\infty}(\Omega)}~~~~\mbox{for all}~(x,t)\in\bar{\Omega}\times [0,\infty).
	\end{equation}
\end{lemma}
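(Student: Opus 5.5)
The plan is to integrate the first equation of \eqref{1.1} over $\Omega$ and use the no-flux boundary condition. Since $\partial_\nu u=\partial_\nu v=0$, we have $\partial_\nu(\phi(v)u)=\phi'(v)u\,\partial_\nu v+\phi(v)\partial_\nu u=0$ on $\partial\Omega$. Hence $\int_\Omega\Delta(\phi(v)u)=0$, and $y(t):=\int_\Omega u(\cdot,t)$ satisfies
\[
y'(t)=a\int_\Omega u-b\int_\Omega u^2 .
\]
Since $\phi$ is only $C^3$ on $(0,\infty)$, this boundary computation uses the positivity $v>0$ from Proposition \ref{pro1.1}. Near $t=0$, where the solution is only continuous, we work on $(\tau,\infty)$ and let $\tau\searrow0$ using continuity of $u$ in $C^0(\overline\Omega\times[0,\infty))$.

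For \eqref{2.1}, the Cauchy--Schwarz inequality gives $\int_\Omega u^2\ge y^2/|\Omega|$, so $y'\le ay-\frac{b}{|\Omega|}y^2$. A standard ODE comparison then yields $y(t)\le\max\{y(0),a|\Omega|/b\}=m$. To see this, note that whenever $y>a|\Omega|/b$ the right-hand side is negative, so $y$ cannot exceed $m$.

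For \eqref{2.5*}, integrate the identity over $(t,t+1)$:
\[
b\int_t^{t+1}\!\!\int_\Omega u^2=y(t)-y(t+1)+a\int_t^{t+1}y\le m+am,
\]
using $y\ge0$ and \eqref{2.1}. This gives $\int_t^{t+1}\int_\Omega u^2\le (a+1)m/b$.

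For \eqref{2.2}, apply the parabolic maximum principle to $v_t=\Delta v-uv$. Since $u,v>0$, we have $v_t\le\Delta v$ with homogeneous Neumann data, so $v$ is a subsolution of the heat equation. Comparison with the constant $\|v_0\|_{L^\infty(\Omega)}$ then gives the bound. Alternatively, one can test with $(v-\|v_0\|_{L^\infty})_+$ and observe that its $L^2$ norm is nonincreasing.

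None of these steps is a genuine obstacle. The only delicate point is justifying the vanishing boundary flux and the regularity up to $t=0$, which is handled as described above.
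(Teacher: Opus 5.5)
Your proposal is correct and follows essentially the same route as the paper: integrate the $u$-equation, apply Cauchy--Schwarz and ODE comparison for \eqref{2.1}, integrate the same identity over $(t,t+1)$ for \eqref{2.5*}, and use the maximum principle (since $uv\ge0$) for \eqref{2.2}. Your version is slightly more careful than the paper's, since you justify the vanishing boundary flux via $v>0$ and handle the passage to $t=0$ explicitly.
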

\begin{proof}[Proof]
	By the H\"{o}lder inequality, integrating the first equation in \eqref{1.1} yields
	\begin{equation*}
		\frac{d}{dt}\int_{\Omega}u= a\int_{\Omega}u-b\int_{\Omega}u^2\leq a\int_{\Omega}u-\frac{b}{|\Omega|}\bigg(\int_{\Omega}u\bigg)^2 \quad\hbox{for all}~t>0,
	\end{equation*}
	which along with an ODE comparison argument clearly implies \eqref{2.1}.
Then \eqref{2.5*} follows by integrating the same identity and applying the Cauchy--Schwarz inequality. Finally, \eqref{2.2} is a direct consequence of the maximum principle applied to the second equation in \eqref{1.1}, since $uv\ge0$.
\end{proof}

Based on the classical $L^p-L^q$ estimates for the Neumann heat semigroup $(e^{t\Delta})_{t\geq0}$ in \cite{Winkler(2010)},
the regularity of $v$  can be enhanced through the higher integrability of $u$.

\begin{lemma}\label{le2.5} Let $\Omega \subset \mathbb R^2$ be  a smoothly bounded domain. Then for all $p\geq1$ and
	\begin{equation*}
		\begin{aligned}
			\theta\in\left\{
			\begin{array}{l}
				{\left({1,\frac{2p}{2-p}}\right)},\quad p< 2, \\
				{(1,\infty)},\quad\quad p=2,\\
				{(1,\infty]},\quad\quad p>2,
			\end{array}\right.
		\end{aligned}
	\end{equation*}
	there exists a constant $C>0$ such that
	\begin{equation}
		\|v(\cdot,t)\|_{W^{1,\theta}(\Omega)}\leq C(1+\sup\limits_{s>0}\|u(\cdot,s)\|_{L^p(\Omega)})~~\mbox{for all} ~t\in(0,\infty).
	\end{equation}
\end{lemma}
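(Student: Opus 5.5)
We would argue via the variation-of-constants representation of the second equation of \eqref{1.1}, namely
\[
v(\cdot,t)=e^{t\Delta}v_0-\int_0^t e^{(t-s)\Delta}\big(u(\cdot,s)v(\cdot,s)\big)\,ds ,
\]
combined with the Neumann heat semigroup $L^p$--$L^q$ estimates from \cite{Winkler(2010)}. Throughout, the source term is controlled using \eqref{2.2}:
\[
\|u v(\cdot,s)\|_{L^p(\Omega)}\le \|v_0\|_{L^\infty(\Omega)}\,\sup_{s>0}\|u(\cdot,s)\|_{L^p(\Omega)} .
\]
We may assume this supremum is finite; otherwise the statement is trivial. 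The $L^\theta$ part of the $W^{1,\theta}$ norm is immediate from \eqref{2.2}, since $0<v\le\|v_0\|_{L^\infty(\Omega)}$ and $\Omega$ is bounded. It therefore suffices to bound $\|\nabla v(\cdot,t)\|_{L^\theta(\Omega)}$ uniformly in $t$.

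For the gradient, we first treat the initial-data term. Since $v_0\in W^{1,\infty}(\Omega)$, we have
\[
\|\nabla e^{t\Delta}v_0\|_{L^\theta}\le C\|\nabla v_0\|_{L^\theta}\le C
\]
for all $t>0$. For the Duhamel integral we use the smoothing estimate
\[
\|\nabla e^{\tau\Delta}w\|_{L^\theta}\le C\big(1+\tau^{-\frac12-(\frac1p-\frac1\theta)}\big)e^{-\lambda_1\tau}\|w\|_{L^p},
\]
valid in dimension $n=2$ for $1\le p\le\theta\le\infty$ with $p<\infty$. Here $\lambda_1>0$ is the first nonzero Neumann eigenvalue. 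For $\theta<p$ we simply use the embedding $L^p\subset L^\theta$ on the bounded domain, which reduces that case to $\theta=p$. Applying this estimate with $\tau=t-s$ gives
\[
\|\nabla v(\cdot,t)\|_{L^\theta}\le C+C\sup_{s>0}\|u v\|_{L^p}\int_0^\infty\big(1+\tau^{-\frac12-\frac1p+\frac1\theta}\big)e^{-\lambda_1\tau}\,d\tau .
\]

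The main point to check is integrability of the singularity at $\tau=0$, which requires
\[
\frac12+\frac1p-\frac1\theta<1, \qquad\text{i.e.}\qquad \frac1\theta>\frac1p-\frac12 .
\]
For $p<2$ this reads $\theta<\frac{2p}{2-p}$. For $p=2$ it gives any finite $\theta$. For $p>2$ the right-hand side is negative, so $\theta=\infty$ is also allowed. These are exactly the three ranges in the statement.

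The exponential factor handles large $\tau$, and no mean-zero issue arises: the gradient of a constant vanishes, so the decay $e^{-\lambda_1\tau}$ is available for $\nabla e^{\tau\Delta}$ applied to any $L^p$ function. This yields
\[
\|v(\cdot,t)\|_{W^{1,\theta}(\Omega)}\le C\Big(1+\sup_{s>0}\|u(\cdot,s)\|_{L^p(\Omega)}\Big)
\]
with $C$ independent of $t$. The only delicate step is confirming the admissible exponent ranges, including the endpoint $\theta=\infty$ for $p>2$, for the gradient semigroup estimate; all the remaining steps are routine.
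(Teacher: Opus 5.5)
Your proposal is correct and is essentially the argument the paper relies on: the paper gives no separate proof and simply invokes the Neumann heat semigroup $L^p$--$L^q$ estimates of \cite{Winkler(2010)} applied to the Duhamel formula for $v$, with $\|uv\|_{L^p}\le\|v_0\|_{L^\infty}\|u\|_{L^p}$ and the integrability condition $\frac12+\frac1p-\frac1\theta<1$ giving exactly the stated ranges of $\theta$. One minor point is that the gradient-to-gradient bound $\|\nabla e^{t\Delta}v_0\|_{L^\theta}\le C$ in \cite{Winkler(2010)} covers only finite $\theta\ge2$; the case $\theta<2$ follows by H\"older, and $\theta=\infty$ uses the standard $W^{1,\infty}$ gradient bound that the paper itself invokes in Lemma \ref{lem3.5}.
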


\section{Estimates of $\|u(\cdot,t)\|_{L^{p}(\Omega)}$}

This section establishes higher-order integrability estimates for $u$ through a bootstrap argument. A key ingredient is the following lemma on the dissipative structure of the weighted functional $\int_\Omega v^{-q+1}|\nabla v|^q$ for $q\ge2$.
The dissipative property derived below, especially the third term on the left-side of \eqref{3.1**}, will
 be essential in compensating the cross-diffusive effects that arise in the $L^p$-testing procedure for $u$.

\begin{lemma}\label{lemma3.3}
	Let $\Omega\subset\mathbb{R}^2$ be a smoothly bounded domain and $q\geq 2$. Then there exists $C(q)>0$ such that for all $t\in(0,\infty)$,
	\begin{equation}\label{3.1**}
		\begin{split}
			&\frac{d}{dt}\int_{\Omega}\frac{|\nabla v|^q}{v^{q-1}}+\frac{q}{8(q+\sqrt{2})^2}\int_{\Omega}\frac{|\nabla v|^{q+2}}{v^{q+1}}+(q-1)^2\int_\Omega u\frac{|\nabla v|^q}{v^{q-1}}
			\leq  C(q)\bigg\{\int_{\Omega}u^{\frac{q+2}{2}}v+\int_\Omega v\bigg\}.
		\end{split}
	\end{equation}
\end{lemma}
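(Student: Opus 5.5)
We plan to differentiate $\int_\Omega v^{-q+1}|\nabla v|^q$ directly along the second equation in \eqref{1.1}, following Winkler's singular-weight calculus. Write $v_t=\Delta v-uv$. Then
\begin{equation*}
\frac{d}{dt}\int_\Omega \frac{|\nabla v|^q}{v^{q-1}}
= q\int_\Omega v^{1-q}|\nabla v|^{q-2}\nabla v\cdot\nabla(\Delta v-uv)
-(q-1)\int_\Omega v^{-q}|\nabla v|^q(\Delta v-uv).
\end{equation*}

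\textbf{The diffusion part.} We treat the contributions from $\Delta v$ the standard way. Integrate by parts, using $\nabla v\cdot\nabla\Delta v=\frac12\Delta|\nabla v|^2-|D^2v|^2$. Handle the boundary integral $\int_{\partial\Omega}v^{1-q}|\nabla v|^{q-2}\partial_\nu|\nabla v|^2$ through $\partial_\nu|\nabla v|^2\le 2\kappa|\nabla v|^2$ (convexity is not assumed, so we need the curvature bound) combined with a trace/interpolation inequality. This boundary term is then absorbed into a small fraction of the dissipation plus $C\int_\Omega v$.

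The interior terms, after completing squares, should reorganize into $-c_q\int_\Omega v^{-q+3}|\nabla v|^{q-2}|D^2\ln v|^2$ plus a harmless remainder. This is exactly the route of \cite{WinklerDCDSB}. Lemma \ref{le2.3} then converts the $|D^2\ln v|^2$ dissipation into the two quantities $\int_\Omega v^{-q-1}|\nabla v|^{q+2}$ and $\int_\Omega v^{-q+1}|\nabla v|^{q-2}|D^2v|^2$. Keeping track of constants should give the coefficient $\frac{q}{8(q+\sqrt2)^2}$ in front of the first.

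\textbf{The consumption part.} The $-(q-1)\int_\Omega v^{-q}|\nabla v|^q(-uv)$ piece gives $+(q-1)\int_\Omega u\, v^{1-q}|\nabla v|^q$. The term from $-q\int_\Omega v^{1-q}|\nabla v|^{q-2}\nabla v\cdot\nabla(uv)$ splits into two pieces:
\begin{itemize}
\item the $-q\int_\Omega u\, v^{1-q}|\nabla v|^q$ part, which gives net $-\int_\Omega u\,v^{1-q}|\nabla v|^q$ after combining with the previous term;
\item the part $-q\int_\Omega v^{2-q}|\nabla v|^{q-2}\nabla v\cdot\nabla u$, which contains $\nabla u$.
\end{itemize}
The latter must be integrated by parts to move the derivative off $u$. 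This produces
\begin{equation*}
q\int_\Omega u\,\nabla\cdot\big(v^{2-q}|\nabla v|^{q-2}\nabla v\big),
\end{equation*}
which contains $(2-q)\int_\Omega u\, v^{1-q}|\nabla v|^q$ and $\int_\Omega u\, v^{2-q}|\nabla v|^{q-2}|D^2v|$-type terms. Collecting the signs, the pure $u v^{1-q}|\nabla v|^q$ terms give $-(q-1)^2$ on the right-hand side, i.e.\ $+(q-1)^2$ on the left as claimed. I expect this to be the delicate bookkeeping step: the coefficients must combine exactly to $q(q-2)+1=(q-1)^2$.

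\textbf{Absorbing the Hessian-weighted term.} The remaining term $C\int_\Omega u\, v^{2-q}|\nabla v|^{q-2}|D^2v|$ is estimated by Young's inequality as
\begin{equation*}
\varepsilon\int_\Omega v^{1-q}|\nabla v|^{q-2}|D^2v|^2+C_\varepsilon\int_\Omega u^2 v^{3-q}|\nabla v|^{q-2}.
\end{equation*}
The first piece is absorbed via the second inequality of Lemma \ref{le2.3}. For the second piece, Young with exponents $\frac{q+2}{q-2}$ and $\frac{q+2}{4}$ bounds it by
\begin{equation*}
\varepsilon\int_\Omega v^{-q-1}|\nabla v|^{q+2}+C\int_\Omega u^{\frac{q+2}{2}}\,v^{\gamma}.
\end{equation*}
We check the power $\gamma$ via weight bookkeeping: $u^2v^{3-q}|\nabla v|^{q-2}=\big(v^{-(q+1)\frac{q-2}{q+2}}|\nabla v|^{q-2}\big)\cdot u^2 v^{\gamma'}$. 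This gives $\gamma'=\frac{8}{q+2}$, hence $\gamma=\gamma'\frac{q+2}{4}=2$. Since $v\le\|v_0\|_{L^\infty}$ by \eqref{2.2}, we have $v^2\le Cv$, which yields the $\int_\Omega u^{\frac{q+2}{2}}v$ term.

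The main obstacle is keeping enough dissipation after all absorptions, and handling the boundary term without convexity; the remainder $C\int_\Omega v$ comes from that boundary treatment.
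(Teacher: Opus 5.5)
Your proposal follows essentially the same route as the paper: you differentiate along the $v$-equation and integrate the $\nabla u$-term by parts to get the coefficient $-(q-1)^2$ and the Hessian-weighted $u$-terms; then you reorganize into the $|D^2\ln v|^2$ dissipation, apply Lemma~\ref{le2.3} and the boundary estimate of \cite[Lemma 3.5]{WinklerDCDSB}, and finish with two Young steps. The only slip is in your weight bookkeeping: the exponent is $\gamma'=\frac{4}{q+2}$ (not $\frac{8}{q+2}$), so the final Young step gives $\int_\Omega u^{\frac{q+2}{2}}v$ directly, and your appeal to $v\le\|v_0\|_{L^\infty(\Omega)}$ is unnecessary, though harmless.
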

\begin{proof}
	According to the second equation in \eqref{1.1} as well as the identities $\nabla v\cdot\nabla\Delta v=\frac{1}{2}\Delta|\nabla v|^2-|D^2v|^2$ and $\nabla|\nabla v|^2=2D^2v\cdot\nabla v$, several integrations by parts show that
	\begin{align}\label{3.9}
		& \frac{d}{dt}\int_\Omega \frac{|\nabla v|^q}{v^{q-1}}\nonumber\\
		=&-\frac{q(q-2)}{4}\int_\Omega v^{-q+1}|\nabla v|^{q-4}|\nabla|\nabla v|^2|^2+q(q-1)\int_\Omega v^{-q}|\nabla v|^{q-2}\nabla v\cdot\nabla|\nabla v|^2\nonumber\\
		&-q\int_\Omega v^{-q+1}|\nabla v|^{q-2}|D^2 v|^2-q(q-1)\int_\Omega \frac{|\nabla v|^{q+2}}{v^{q+1}}\\
		&+\frac{q}{2}\int_{\partial\Omega}v^{-q+1}|\nabla v|^{q-2}\cdot\frac{\partial|\nabla v|^2}{\partial\nu}+q(q-2)\int_\Omega u v^{-q+2}|\nabla v|^{q-4}\nabla v\cdot(D^2 v\cdot\nabla v)\nonumber\\
		&+q\int_\Omega u v^{-q+2}|\nabla v|^{q-2}\Delta v-(q-1)^2\int_\Omega u \frac{|\nabla v|^q}{v^{q-1}}\nonumber
	\end{align}
	for all $t\in(0,\infty)$. Due to the fact that for all $\varphi\in C^2(\Omega)$ with $\varphi>0$,
	\begin{equation*}
		|D^2\ln\varphi|^2=\frac{1}{\varphi^2}|D^2\varphi|^2-\frac{2}{\varphi^3}
  \nabla\varphi\cdot(D^2\varphi\cdot\nabla\varphi)
+\frac{1}{\varphi^4}|\nabla\varphi|^4,
	\end{equation*}
	we get
	\begin{equation*}
		|D^2v|^2=v^2|D^2\ln v|^2+\frac{1}{v}\nabla v\cdot\nabla|\nabla v|^2-\frac{1}{v^2}|\nabla v|^4,
	\end{equation*}
	and then rewrite
	\begin{align}\label{3.10}
		&-\frac{q(q-2)}{4}\int_\Omega v^{-q+1}|\nabla v|^{q-4}|\nabla|\nabla v|^2|^2+q(q-1)\int_\Omega v^{-q}|\nabla v|^{q-2}\nabla v\cdot\nabla|\nabla v|^2\nonumber\\
		&-q\int_\Omega v^{-q+1}|\nabla v|^{q-2}|D^2 v|^2-q(q-1)\int_\Omega \frac{|\nabla v|^{q+2}}{v^{q+1}}\nonumber\\
		=&-q\int_\Omega v^{-q+3}|\nabla v|^{q-2}|D^2\ln v|^2-\frac{q(q-2)}{4}\int_\Omega v^{-q+1}|\nabla v|^{q-4}|\nabla|\nabla v|^2|^2\\
		&+q(q-2)\int_\Omega v^{-q}|\nabla v|^{q-2}\nabla v\cdot\nabla|\nabla v|^2-q(q-2)\int_\Omega \frac{|\nabla v|^{q+2}}{v^{q+1}}\nonumber\\
		=&-q\int_\Omega v^{-q+3}|\nabla v|^{q-2}|D^2\ln v|^2-\frac{q(q-2)}{4}\int_\Omega v^{-q+1}|\nabla v|^{q-4}\bigg|\nabla|\nabla v|^2-\frac{2}{v}|\nabla v|^2\nabla v\bigg|^2,\nonumber
	\end{align}
	while estimating $|\Delta v|\leq\sqrt{2}|D^2v|$, we obtain that
	\begin{equation}\label{2.18}
		\begin{split}
			&q(q-2)\int_\Omega u v^{-q+2}|\nabla v|^{q-4}\nabla v\cdot(D^2 v\cdot\nabla v)+q\int_\Omega u v^{-q+2}|\nabla v|^{q-2}\Delta v\\
			\leq& q(q-2)\int_\Omega u v^{-q+2}|\nabla v|^{q-2}|D^2v|+q\sqrt{2}\int_\Omega u v^{-q+2}|\nabla v|^{q-2}|D^2v|
		\end{split}
	\end{equation}
	for all $t\in(0,\infty)$. According to \cite[Lemma 3.5]{WinklerDCDSB}, there exists $c_1(q)>0$ fulfilling
	\begin{equation}\label{2.19}
		\begin{split}
			&\frac{q}{2}\int_{\partial\Omega}v^{-q+1}|\nabla v|^{q-2}\cdot\frac{\partial|\nabla v|^2}{\partial\nu}\\
			\leq &\frac{q}{4(q+\sqrt{2}+1)^2}\int_\Omega v^{-q+1}|\nabla v|^{q-2}|D^2 v|^2+\frac{q}{4(q+\sqrt{2})^2}\int_\Omega \frac{|\nabla v|^{q+2}}{v^{q+1}}+c_1(q)\int_\Omega v.
		\end{split}
	\end{equation}
	Furthermore, by Lemma \ref{le2.3}, we have
	\begin{align}\label{3.13}
		&~~q\int_\Omega v^{-q+3}|\nabla v|^{q-2}|D^2\ln v|^2\\
&\geq \frac{q}{2(q+\sqrt{2}+1)^2}\int_\Omega v^{-q+1}|\nabla v|^{q-2}|D^2 v|^2+\frac{q}{2(q+\sqrt{2})^2}\int_\Omega \frac{|\nabla v|^{q+2}}{v^{q+1}}.\nonumber
	\end{align}
	Substituting \eqref{3.10}--\eqref{3.13} into \eqref{3.9}, we get
	\begin{align}\label{3.14}
		&\frac{d}{dt}\int_\Omega\frac{|\nabla v|^q}{v^{q-1}}+\frac{q}{4(q+\sqrt{2}+1)^2}\int_\Omega v^{-q+1}|\nabla v|^{q-2}|D^2 v|^2+\frac{q}{4(q+\sqrt{2})^2} \int_\Omega \frac{|\nabla v|^{q+2}}{v^{q+1}}\nonumber\\
&\quad+(q-1)^2\int_\Omega u\frac{|\nabla v|^q} {v^{q-1}}\\
		\leq & q(q+\sqrt{2}-2)\int_\Omega u v^{-q+2}|\nabla v|^{q-2}|D^2 v|+c_1(q)\int_\Omega v.\nonumber
	\end{align}
	Now employing Young's inequality we can estimate
	\begin{align*}
		&q(q+\sqrt{2}-2)\int_\Omega u v^{-q+2}|\nabla v|^{q-2}|D^2v|\\
		\leq &\frac{q}{4(q+\sqrt{2}+1)^2}\int_\Omega v^{-q+1}|\nabla v|^{q-2}|D^2v|^2+\frac{q^2(q+\sqrt{2}+1)^2(q+\sqrt{2}-2)^2}{q}\int_\Omega u^2v^{-q+3}|\nabla v|^{q-2}\\
		= &\frac{q}{4(q+\sqrt{2}+1)^2}\int_\Omega v^{-q+1}|\nabla v|^{q-2}|D^2v|^2\\
		&\quad+\int_\Omega\bigg\{\frac{q}{8(q+\sqrt{2})^2}\frac{|\nabla v|^{q+2}}{v^{q+1}}\bigg\}^{\frac{q-2}{q+2}}\cdot
		\bigg\{\bigg(\frac{8(q+\sqrt{2})^2}{q}\bigg)^{\frac{q-2}{q+2}}
\cdot256q^5u^2v^{\frac{4}{q+2}}\bigg\}\\
		\leq &\frac{q}{4(q+\sqrt{2}+1)^2}\int_\Omega v^{-q+1}|\nabla v|^{q-2}|D^2v|^2+\frac{q}{8(q+\sqrt{2})^2}\int_\Omega\frac{|\nabla v|^{q+2}}{v^{q+1}} \\ &\quad+\bigg\{\frac{8(q+\sqrt{2})^2}{q}\bigg\}^{\frac{q-2}{4}}
		\cdot\bigg\{256q^5\bigg\}^{\frac{q+2}{4}}\int_{\Omega}u^{\frac{q+2}{2}}v,
	\end{align*}
which along with \eqref{3.14}  arrives at \eqref{3.1**} with desired $C(q)$ readily.
\end{proof}

On the basis of  inequality \eqref{3.1**}, 
we next establish the uniform $L^2$-boundedness of $u$, which is a starting point for the subsequent bootstrap argument.




\begin{lemma}\label{L2bound}
    Let $\Omega\subset\mathbb{R}^2$ be a smoothly bounded domain, $a>0$ and $b>0$.
    Suppose that \eqref{1.5}--\eqref{1.6} hold with $\alpha\ge1$, and that \eqref{1.8} is valid.
    Then there exists a constant $C>0$ such that
    \begin{equation}\label{3.8}
        \|u(\cdot,t)\|_{L^2(\Omega)} \le C \quad \text{for all } t>0.
    \end{equation}
\end{lemma}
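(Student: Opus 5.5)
We test the first equation with $u$ and couple the resulting inequality with Lemma \ref{lemma3.3} for $q=2$. Since $\Delta(\phi(v)u)=\nabla\cdot(\phi(v)\nabla u+u\phi'(v)\nabla v)$, integrating by parts gives
\begin{equation*}
\frac12\frac{d}{dt}\int_\Omega u^2+\int_\Omega\phi(v)|\nabla u|^2+b\int_\Omega u^3
= a\int_\Omega u^2-\int_\Omega u\phi'(v)\nabla u\cdot\nabla v .
\end{equation*}
We then use Young's inequality on the cross term:
\begin{equation*}
\Big|\int_\Omega u\phi'(v)\nabla u\cdot\nabla v\Big|\le\frac12\int_\Omega\phi(v)|\nabla u|^2+\frac12\int_\Omega\frac{\phi'^2(v)}{\phi(v)}u^2|\nabla v|^2 .
\end{equation*}
The weight $\phi'^2/\phi$ is the quantity that the degeneracy assumption \eqref{1.6} controls. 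Near $s=0$ it behaves like $s^{2\alpha-2}/s^{\alpha}=s^{\alpha-2}$. On $[s_0,\|v_0\|_{L^\infty(\Omega)}]$ it is bounded by continuity, since $\phi>0$ there, and \eqref{2.2} keeps $v$ in this range from above. Because $\alpha\ge1$, we obtain $\phi'^2(v)/\phi(v)\le c\,v^{-1}$ for all $v\in(0,\|v_0\|_{L^\infty(\Omega)}]$.

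The cross term is therefore bounded by $c\int_\Omega u^2|\nabla v|^2/v$. We split it with Young's inequality using exponents $3/2$ and $3$:
\begin{equation*}
c\int_\Omega u^2\frac{|\nabla v|^2}{v}\le\frac b2\int_\Omega u^3+c'\int_\Omega\frac{|\nabla v|^6}{v^3}.
\end{equation*}
The last integral is not directly a dissipation term of \eqref{3.1**}, which for $q=2$ only supplies $\int_\Omega|\nabla v|^4/v^3$. To close the estimate we instead invoke Lemma \ref{lemma3.3} with $q=4$. This gives $\frac{d}{dt}\int_\Omega|\nabla v|^4/v^3$ together with the dissipation $\int_\Omega|\nabla v|^6/v^5$, and $|\nabla v|^6/v^3\le\|v_0\|_{L^\infty(\Omega)}^2|\nabla v|^6/v^5$. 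The price is the source term $C(4)\int_\Omega u^3v\le C\|v_0\|_{L^\infty(\Omega)}\int_\Omega u^3$ on the right-hand side of \eqref{3.1**}.

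\textbf{Main obstacle.} We must balance these constants: the $v$-functional produces $u^3$ with a fixed constant, while $b$ may be small. The plan is to form $y(t)=\int_\Omega u^2+\delta\int_\Omega|\nabla v|^4/v^3$. We choose $\delta$ small enough that $\delta C(4)\|v_0\|_{L^\infty(\Omega)}\int_\Omega u^3\le\frac b4\int_\Omega u^3$. We then need $c'\int_\Omega|\nabla v|^6/v^3$ to be absorbed by $\delta\frac{4}{8(4+\sqrt2)^2}\int_\Omega|\nabla v|^6/v^5$. This fails for small $\delta$ unless the Young splitting in the previous step is reweighted, so we redo it as
\begin{equation*}
c\int_\Omega u^2\frac{|\nabla v|^2}{v}\le\varepsilon\int_\Omega u^3+C_\varepsilon\int_\Omega\frac{|\nabla v|^6}{v^3}.
\end{equation*}
Here $C_\varepsilon\sim\varepsilon^{-2}$, which is fatal as it stands. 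The better route is to keep the remaining factor of $v$: using $|\nabla v|^6/v^3=v^2\cdot|\nabla v|^6/v^5$, on the set where $v$ is small the weight $v^2$ is small. If that does not suffice, we fall back on the factor $v^{\alpha-1}$ available when $\alpha>1$ is exploited, or use the term $(q-1)^2\int_\Omega u|\nabla v|^q/v^{q-1}$ of \eqref{3.1**} with $q=2$. Indeed, $c\int_\Omega u^2|\nabla v|^2/v\le\int_\Omega u|\nabla v|^2/v+c^2\int_\Omega u^3|\nabla v|^2/v$ does not close either. The most promising option is to pair $\int_\Omega u|\nabla v|^4/v^3$ from \eqref{3.1**} with $q=4$ against the cross term by Young's inequality,
\begin{equation*}
u^2\frac{|\nabla v|^2}{v}\le\eta\,u\frac{|\nabla v|^4}{v^3}+\frac1{4\eta}u^3v,
\end{equation*}
where $u^3v\le\|v_0\|_{L^\infty(\Omega)}u^3$. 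Combining all these pieces with a suitable choice of $\delta,\eta$ so that every $u^3$ coefficient stays below $b/2$ is the delicate step.

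Once an inequality $y'+\kappa\int_\Omega u^3+\kappa\int_\Omega|\nabla v|^6/v^5\le C\int_\Omega u^2+C$ is obtained, we finish as follows. We use $\int_\Omega u^2\le\epsilon\int_\Omega u^3+C_\epsilon$ and, via Young's inequality, $\int_\Omega|\nabla v|^4/v^3\le\int_\Omega|\nabla v|^6/v^5+C\int_\Omega v$. This yields $y'+\kappa' y\le C$, and ODE comparison gives the uniform $L^2$ bound \eqref{3.8}. As an alternative, \eqref{2.5*} combined with a uniform Gronwall argument also suffices.
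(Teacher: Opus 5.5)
There is a genuine gap, and it is exactly at the step you call delicate: no choice of $\delta$ and $\eta$ closes the estimate for general data. Write $K\int_\Omega u^2|\nabla v|^2/v$ for your bound on the cross term and set $V:=\|v_0\|_{L^\infty(\Omega)}$. Take your most promising variant. Absorbing $K\eta\int_\Omega u|\nabla v|^4/v^3$ into the term $9\delta\int_\Omega u|\nabla v|^4/v^3$ from \eqref{3.1**} with $q=4$ forces $\eta\le 9\delta/K$. At the same time the total coefficient of $\int_\Omega u^3$ must stay below $b$ (up to a harmless factor), i.e.\ $\frac{KV}{4\eta}+\delta C(4)V<b$. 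Substituting $\eta=9\delta/K$ and minimizing over $\delta$ shows this is possible only if $\sqrt{C(4)}\,KV\lesssim b$. Your first variant via $\int_\Omega|\nabla v|^6/v^3$ leads to the analogous requirement $KV\lesssim b$, with constants depending on $C(4)$ and the $q=4$ dissipation coefficient.

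This is structural, not bookkeeping. Your bound $K\int_\Omega u^2|\nabla v|^2/v$ and every term generated by $\int_\Omega|\nabla v|^q/v^{q-1}$ are homogeneous of degree one in $v$, whereas $b\int_\Omega u^3$ has degree zero. So as long as you discard $\int_\Omega\phi(v)|\nabla u|^2$ and absorb pointwise by Young's inequality, the ratio $V/b$ cannot be removed. The obstruction is felt where $v$ is comparable to $V$, so smallness of $v^2$ elsewhere does not help. Your remaining suggestions add no new ingredient, and the concluding ODE step presupposes an inequality that is never derived. As written, the argument gives the lemma only under a smallness condition on $\|v_0\|_{L^\infty(\Omega)}$ relative to $b$, which the statement does not assume.

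The missing idea, which is what the paper uses, is to avoid absorbing the source of the $v$-functional into $b\int_\Omega u^3$ at all. First, apply Lemma \ref{lemma3.3} only with $q=2$. Its source $\int_\Omega u^2v$ is already controlled in time average by \eqref{2.5*}, which comes from the logistic term alone. Hence $\sup_{t>0}\int_\Omega|\nabla v|^2/v<\infty$, and consequently $\int_t^{t+1}\int_\Omega|\nabla v|^4/v^3\le C$, with no smallness needed.

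Second, keep the degenerate diffusion. Estimate $\int_\Omega u^2v^{\alpha-2}|\nabla v|^2\le\|u^2v^{\alpha-1/2}\|_{L^2(\Omega)}\big(\int_\Omega|\nabla v|^4/v^3\big)^{1/2}$, and apply the two-dimensional embedding $W^{1,1}(\Omega)\hookrightarrow L^2(\Omega)$ to $u^2v^{\alpha-1/2}$. The $L^1$-norm of its gradient is bounded by $C(\int_\Omega u^2)^{1/2}$ times the square roots of $\int_\Omega v^\alpha|\nabla u|^2$ and of the cross term itself. This gives \eqref{3.12}, a bound that is linear in $y=\int_\Omega u^2+1$, with coefficient $g(t)=c(\int_\Omega|\nabla v|^4/v^3+1)$ that is only uniformly integrable over unit time intervals.

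Third, the resulting inequality $y'\le g(t)y$, together with $\int_t^{t+1}y\le C$ from \eqref{2.5*}, yields the uniform bound by a uniform Gronwall argument. Your closing remark about a uniform Gronwall argument points in this direction, but without such a product-form estimate of the cross term there is nothing to apply it to.
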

\begin{proof}Applying Lemma \ref{lemma3.3} to $q:=2$, we can find some $c_1>0$ and $c_2>0$ such that
	\begin{align}\label{3.9*}
		\frac{d}{dt}\int_\Omega \frac{|\nabla v|^2}{v}+\frac{1}{128}\int_\Omega \frac{|\nabla v|^{4}}{v^3}+\int_\Omega u\frac{|\nabla v|^2}{v}\leq c_1\bigg\{\int_{\Omega}u^{2}+\int_\Omega v\bigg\}
	\end{align}
and
    \begin{equation}\label{3.11**}
        \frac{d}{dt}\int_\Omega \frac{|\nabla v|^2}{v}+\int_\Omega\frac{|\nabla v|^2}{v}\leq c_2\bigg\{\int_{\Omega}u^{2}+\int_\Omega v\bigg\},
    \end{equation}
    where the latter follows from the elementary inequality
    $\int_\Omega\frac{|\nabla v|^2}{v}\leq\frac{1}{128}\int_{\Omega}\frac{|\nabla v|^4}{v^3}+32\int_{\Omega}v.$

 In view of \eqref{2.5*},  \eqref{3.11**} implies that $\int_{\Omega}\frac{|\nabla v|^2}{v}(\cdot,t)\leq c_3$ for some $c_3>0$  and all $t>0$.
 Integrating \eqref{3.9*} over the interval $(t,t+1)$ then gives
	$$\int_t^{t+1}\int_{\Omega}\frac{|\nabla v|^4}{v^3}
	\leq c_4~~\mbox{for all}~~t>0.$$

Next, testing the first equation in
 \eqref{1.1} by $u$ and applying Young's inequality, along with the assumptions  \eqref{1.6}, we can find $c_5>0$ and $c_6>0$ such that
	\begin{align}\label{3.11}
		\frac{d}{dt}\int_{\Omega}u^2+ c_5\int_{\Omega} v^\alpha|\nabla u|^2+2b\int_{\Omega} u^3\leq c_6\int_{\Omega} u^2v^{\alpha-2}|\nabla v|^2+2a\int_{\Omega} u^2.
	\end{align}
	
    By Young's inequality,  the first term on the right-side of \eqref{3.11}   can be estimated as
    \begin{equation}\label{3.12}
	c_6\int_{\Omega} u^2v^{\alpha-2}|\nabla v|^2\leq c_5\int_\Omega v^\alpha|\nabla u|^2+c_7\bigg\{\int_{\Omega}u^2+1\bigg\}\bigg\{\int_{\Omega}\frac{|\nabla v|^4}{v^3}+1\bigg\}
\end{equation}
        with some $c_7>0$; for a detailed derivation of this estimate, we refer to \cite[Lemma 3.7]{Wang2025}.

Therefore, inserting \eqref{3.12} into \eqref{3.11} leads to
	\begin{align*}
			\frac{d}{dt}\int_{\Omega}u^2+2b\int_{\Omega} u^3
\leq (2a+c_7) \bigg\{\int_{\Omega}u^2+1\bigg\}\bigg\{\int_{\Omega}\frac{|\nabla v|^4}{v^3}+1\bigg\}.
	\end{align*}
Since, by Young's inequality,
\begin{align*}
	b|\Omega|^{-\frac{1}{2}}\bigg\{\int_{\Omega}u^2 +1\bigg\}^{\frac{3}{2}}\leq \sqrt{2}b|\Omega|^{-\frac{1}{2}}\bigg\{\int_{\Omega}u^2\bigg\}^{\frac{3}{2}}
+\sqrt{2}b|\Omega|^{-\frac{1}{2}}\leq2b\int_{\Omega}u^3+2b|\Omega|^{-\frac{1}{2}},
\end{align*}
it follows that
	\begin{align}\label{3.13**}
	\frac{d}{dt}\int_{\Omega}u^2+	b|\Omega|^{-\frac{1}{2}}\bigg\{\int_{\Omega}u^2 +1\bigg\}^{\frac{3}{2}}\leq c_{8}\bigg\{\int_{\Omega}u^2 +1\bigg\}
\bigg\{\int_{\Omega}\frac{|\nabla v|^4}{v^3}+1\bigg\}
\end{align}
where $c_8=2a+c_7+ 2b|\Omega|^{-\frac{1}{2}}$ for all $t>0$.
Writing $y(t):=\int_{\Omega}u^2+1$ and $g(t):=c_{8}\int_{\Omega}\frac{|\nabla v|^4}{v^3}+c_{8}$, \eqref{3.13**} shows
\begin{align*}
	y'(t)+b|\Omega|^{-\frac{1}{2}}y^{\frac{3}{2}}(t)\leq y(t)g(t)
\end{align*}
for all $t>0$, which, by a standard ODE comparison argument, yields \eqref{3.8} with the desired constant $C>0$.
\end{proof}

The following lemma provides a weighted interpolation inequality that will serve as the key tool for handling the cross-diffusive term in the $L^p$-testing procedure. It enables us to dominate mixed integrals of the form
$\int_\Omega u^\beta v^\alpha$ by the available $u$-related dissipation and the weighted gradient dissipation of $v$ from Lemma \ref{lemma3.3}.
The inequality is derived in the two-dimensional setting and does not require convexity of
 $\Omega$.

\begin{lemma}\label{le3.1}
	Let $\Omega \subset \mathbb R^2$ be  a smoothly bounded domain, and suppose that $p>2$, $\alpha>0$ and $1< r<\infty$. Then for any $\eta>0$ one can find a constant $C(\eta,p,r,\alpha,\Omega)>0$
		such that
		\begin{align}\label{3.1}
			\|\varphi^{\frac{p}{2}}\psi^{\frac{\alpha}{2}}\|^2_{L^{r}(\Omega)}\leq \eta \int_{\Omega}\varphi^{p-2}\psi^\alpha|\nabla \varphi|^2+\eta
			\int_{\Omega}\varphi^{p}\psi^{\alpha-2}|\nabla \psi|^2+C(\eta, p,r,\alpha,\Omega)\cdot
			\left\{\int_{\Omega}\varphi\right\}^{p}\left\{\sup\limits_{x\in\Omega}\psi(x)\right\}^\alpha
		\end{align}for all $\varphi, \psi\in C^1(\overline{\Omega})$  with $\varphi,\psi>0$ in $\overline{\Omega}$.
	\end{lemma}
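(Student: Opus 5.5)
The plan is to apply a two-dimensional Gagliardo--Nirenberg inequality to the single function $w:=\varphi^{\frac p2}\psi^{\frac\alpha2}$, whose gradient splits into exactly the two weighted dissipation terms on the right of \eqref{3.1}. Indeed,
\begin{equation*}
\nabla w=\frac p2\varphi^{\frac{p-2}{2}}\psi^{\frac\alpha2}\nabla\varphi+\frac\alpha2\varphi^{\frac p2}\psi^{\frac{\alpha-2}{2}}\nabla\psi,
\end{equation*}
so that
\begin{equation*}
\|\nabla w\|_{L^2(\Omega)}^2\le \frac{p^2}{2}\int_\Omega\varphi^{p-2}\psi^\alpha|\nabla\varphi|^2+\frac{\alpha^2}{2}\int_\Omega\varphi^p\psi^{\alpha-2}|\nabla\psi|^2 .
\end{equation*}
It therefore suffices to show that for every $\eta'>0$ there is $C>0$ with
\begin{equation*}
\|w\|_{L^r(\Omega)}^2\le \eta'\|\nabla w\|_{L^2(\Omega)}^2+C\Big(\int_\Omega\varphi\Big)^p\Big(\sup_\Omega\psi\Big)^\alpha .
\end{equation*}

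For the lower-order part I will use the $L^{2/p}$ quasi-norm of $w$, which fits the scaling of the constant term. Since $\psi\le\sup\psi=:M$,
\begin{equation*}
\int_\Omega w^{\frac2p}=\int_\Omega\varphi\,\psi^{\frac\alpha p}\le M^{\frac\alpha p}\int_\Omega\varphi,
\qquad\text{so}\qquad
\|w\|_{L^{2/p}(\Omega)}^{2}\le M^{\alpha}\Big(\int_\Omega\varphi\Big)^{p},
\end{equation*}
which is exactly the constant term in \eqref{3.1}. The main step is then the Gagliardo--Nirenberg inequality in $\Omega\subset\mathbb R^2$ with a low-exponent ($s=2/p<1$) lower-order norm:
\begin{equation*}
\|w\|_{L^r(\Omega)}\le C_{GN}\|\nabla w\|_{L^2(\Omega)}^{\theta}\|w\|_{L^{s}(\Omega)}^{1-\theta}+C_{GN}\|w\|_{L^{s}(\Omega)},
\end{equation*}
with $\theta\in(0,1)$ determined by $-\frac2r=\theta\cdot 0-(1-\theta)\frac2s$ adjusted, that is, $\frac1r=(1-\theta)\frac1s$, so $\theta=1-\frac{s}{r}=1-\frac{2}{pr}\in(0,1)$. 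The inequality with $s\in(0,1)$ is the known extension of Gagliardo--Nirenberg to quasi-norms (Li--Lankeit type). Because $\Omega$ is a general smooth bounded domain, it holds with the additive lower-order term, and no convexity is needed.

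I expect this step to be the only genuine obstacle: one must justify the Gagliardo--Nirenberg inequality with $s<1$. If citing it is undesirable, I will derive it from the standard version with $s=1$ (using $\|w\|_{L^r}\le C\|\nabla w\|_2^{\theta_1}\|w\|_{L^1}^{1-\theta_1}+C\|w\|_{L^1}$) together with the interpolation $\|w\|_{L^1}\le\|w\|_{L^s}^{\lambda}\|w\|_{L^r}^{1-\lambda}$, and then absorb the resulting power of $\|w\|_{L^r}$ into the left-hand side by Young's inequality.

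Once that inequality is in hand, squaring and applying Young's inequality with exponents $\frac1\theta$ and $\frac1{1-\theta}$ gives
\begin{equation*}
\|w\|_{L^r}^2\le \eta'\|\nabla w\|_2^2+C(\eta')\|w\|_{L^{s}}^2 .
\end{equation*}
Choosing $\eta'=\eta\min\{2/p^2,2/\alpha^2\}$ and inserting the two displayed bounds yields \eqref{3.1}. The constant depends only on $\eta,p,r,\alpha,\Omega$. The hypotheses $p>2$ and $r>1$ ensure $\theta\in(0,1)$, and positivity and $C^1$ regularity of $\varphi,\psi$ make $w\in W^{1,2}(\Omega)$ with the chain rule valid.
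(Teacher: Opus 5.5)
Your proposal is correct and follows essentially the same route as the paper. Both apply the Gagliardo--Nirenberg inequality to $w=\varphi^{p/2}\psi^{\alpha/2}$ with the $L^{2/p}$ quasi-norm and $\theta=1-\frac{2}{pr}$, then use Young's inequality, the Hölder bound $\|w\|_{L^{2/p}(\Omega)}^2\le(\sup_\Omega\psi)^\alpha(\int_\Omega\varphi)^p$, and the expansion of $|\nabla w|^2$. Your optional reduction of the $s<1$ case to the $s=1$ case, via interpolation and absorption of $\|w\|_{L^r(\Omega)}$, is a valid self-contained justification of the step the paper simply cites.
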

	\begin{proof} From the version of the Gagliardo--Nirenberg inequality (\cite{Winkler(2010)}),   it follows that for $1<r<\infty$ there exists $c_1=c_1(r,\Omega)>0$ such that
		\begin{align}\label{3.1*}
			\|\varphi^{\frac{p}{2}}\psi^{\frac{\alpha}{2}}\|^2_{L^{r}(\Omega)}&\leq c_1\|\nabla
			(\varphi^{\frac{p}{2}}\psi^{\frac{\alpha}{2}})\|^{2\theta}_{L^{2}(\Omega)}
\|\varphi^{\frac{p}{2}}\psi^{\frac{\alpha}{2}}\|^{2(1-\theta)}_{L^{\frac{2}{p}}(\Omega)}
			+c_1\|\varphi^{\frac{p}{2}}\psi^{\frac{\alpha}{2}}\|_{L^{\frac{2}{p}}(\Omega)}^2,
		\end{align}
		with $\theta:=1-\frac{2}{rp}$.
		For any $\eta_1>0$,
		the Young inequality leads to
		\begin{align*}
			c_1\|\nabla
			(\varphi^{\frac{p}{2}}\psi^{\frac{\alpha}{2}})\|^{2\theta}_{L^{2}(\Omega)}
			\|\varphi^{\frac{p}{2}}\psi^{\frac{\alpha}{2}}\|^{2(1-\theta)}_{L^{\frac{2}{p}}(\Omega)}
			&\leq \eta_1 \|\nabla (\varphi^{\frac{p}{2}}\psi^{\frac{\alpha}{2}})\|_{L^{2}(\Omega)}^2
			+c_2(\eta_1,\theta)\|\varphi^{\frac{p}{2}}\psi^{\frac{\alpha}{2}}\|_{L^{\frac{2}{p}}(\Omega)}^2,
		\end{align*}
		where $c_2(\eta_1,\theta)={\eta_1}^{\frac{\theta}{\theta-1}} {c_1}^{\frac{1}{1-\theta}}.
		$
		At this position,  an application  of the H\"{o}lder inequality yields
		\begin{equation}\label{3.2d}
			\begin{split}
				\|\varphi^{\frac{p}{2}}\psi^{\frac{\alpha}{2}}\|_{L^{\frac2{p}}(\Omega)}^2
&=\bigg\{\int_\Omega\varphi\psi^{\frac{\alpha}{p}}\bigg\}^{p}	\leq\bigg\{\sup\limits_{x\in\Omega}|\psi(x)|\bigg\}^{\alpha}\cdot\bigg\{\int_\Omega\varphi\bigg\}^{p}.
			\end{split}
		\end{equation}
		In addition, a simple calculation shows that
		\begin{align}\label{3.2e}
			\eta_1\|\nabla
			(\varphi^{\frac{p}{2}}\psi^{\frac{\alpha}{2}})\|_{L^{2}(\Omega)}^2
			&=\eta_1\int_\Omega\bigg|\frac{p}{2}\varphi^{\frac{p-2}{2}}\psi^{\frac{\alpha} 2}\nabla\varphi+\frac{\alpha}2 \varphi^{\frac{p}{2}}\psi^{\frac{\alpha-2} 2}\nabla\psi\bigg|^2\nonumber\\
			&\leq \eta_1p^2\int_\Omega\varphi^{p-2}\psi^{\alpha}|\nabla\varphi|^2
			+\eta_1\alpha^2 \int_\Omega\varphi^{p}\psi^{\alpha-2}|\nabla\psi|^2\nonumber\\
			&\leq\eta\int_\Omega\varphi^{p-2}\psi^{\alpha}|\nabla\varphi|^2
			+\eta\int_\Omega\varphi^{p}\psi^{\alpha-2}|\nabla\psi|^2
		\end{align}
		by the choice of	$
		\eta_1:=\min\{\frac{\eta}{p^2},\frac{\eta}{\alpha^2}\}$. From \eqref{3.1*}--\eqref{3.2e}, it follows that \eqref{3.1} holds with the desired $C(\eta,p,r,\alpha,\Omega)>0$. 
	\end{proof}

Lemma \ref{le3.1} can be further refined to yield explicit control of $\int_\Omega \varphi^\beta\psi^\alpha$ in terms of the dissipative quantities $\int_\Omega \varphi^{p-2}\psi^\alpha|\nabla\varphi|^2$ and the singular weighted integral $\int_\Omega \varphi\psi^{-q+1}|\nabla\psi|^q$, provided that the parameters $\alpha,\beta,p,q$ satisfy certain compatibility conditions. This refinement is recorded in the following lemma.

\begin{lemma}\label{le3.2}
	Let $\Omega\subset\mathbb{R}^2$ be a smoothly bounded domain, $L>0$, $p_0\geq1$, $p>2$, $q>2+\frac{2p}{p_0}$, $\alpha\geq1$ and $\beta\in [1,p+p_0)$. Then for any $\eta\in(0,1)$, there exists $C=C(\eta,L,p,\beta,p_0,q,\alpha,\Omega
	)>0$ such that for all  $\varphi\in C^1(\overline \Omega)$ and $\psi \in C^1(\overline \Omega)$ fulfilling $\varphi> 0$, $\psi>0$ in $\overline \Omega$ and $\int_{\Omega}\varphi^{p_0}\leq L$, we have
	\begin{align*}
		\int_{\Omega}\varphi^{\beta} \psi^{\alpha}&\leq \eta \int_{\Omega}\varphi^{p-2}\psi^{\alpha}|\nabla
		\varphi|^2+\eta \int_{\Omega}\varphi{\psi}^{-q+1}|\nabla  \psi|^q+C\left\{\sup\limits_{x\in\Omega}\psi(x)+1\right\}^\tau\left\{\int_{\Omega}\varphi+\left(\int_{\Omega}\varphi\right)^{p}\right\}
	\end{align*}
	with $\tau:=\frac{pq(\alpha-1)}{q-2}+1$.
\end{lemma}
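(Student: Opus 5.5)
The plan is to reduce everything to Lemma \ref{le3.1}, with three additional ingredients: a Young-type conversion of the quadratic weighted gradient of $\psi$ into the $q$-th power singular integral, a H\"older interpolation against the bound $\int_\Omega\varphi^{p_0}\le L$, and a scaling argument in $\psi$ that produces the factor $\{\sup\psi+1\}^\tau$.

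\textbf{Step 1: reduction to $\psi\le1$.} First I would prove the statement under the extra normalization $\sup_\Omega\psi\le 1$, with $\tau$ replaced by $0$. For general $\psi$ I then set $M:=\sup\psi+1$ and $\tilde\psi:=\psi/M$. Under this rescaling:
\begin{itemize}
\item $\int_\Omega\varphi^\beta\psi^\alpha$ and $\int_\Omega\varphi^{p-2}\psi^\alpha|\nabla\varphi|^2$ both scale like $M^\alpha$;
\item $\int_\Omega\varphi\psi^{-q+1}|\nabla\psi|^q$ is $1$-homogeneous in $\psi$, hence scales like $M$.
\end{itemize}
Applying the normalized inequality to $\tilde\psi$ with parameter $\tilde\eta:=\eta M^{1-\alpha}\le\eta$ therefore returns exactly the desired dissipative terms. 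The price is the constant $C(\tilde\eta)M^\alpha$. If $C(\tilde\eta)$ grows like a power of $\tilde\eta^{-1}$, this becomes a power of $M$. I would track the exponents in Steps 2--3 to check that the power comes out as $\tau=\frac{pq(\alpha-1)}{q-2}+1$. The factor $\frac{q}{q-2}$ should come from the Young exponents in Step 2, and the factor $\alpha-1$ from $M^{1-\alpha}$.

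\textbf{Step 2: converting the $\psi$-gradient term.} In the normalized case I would write
$$\varphi^p\psi^{\alpha-2}|\nabla\psi|^2=\big(\varphi^{2/q}\psi^{-2(q-1)/q}|\nabla\psi|^2\big)\cdot\varphi^{p-2/q}\psi^{\alpha-2+2(q-1)/q},$$
and apply Young's inequality with exponents $\frac q2$ and $\frac q{q-2}$. This gives
$$\int_\Omega\varphi^p\psi^{\alpha-2}|\nabla\psi|^2\le\varepsilon\int_\Omega\varphi\psi^{-q+1}|\nabla\psi|^q+C_\varepsilon\int_\Omega\varphi^\gamma\psi^{\alpha+\frac{2(\alpha-1)}{q-2}},\qquad \gamma:=p+\frac{2(p-1)}{q-2}.$$
Since $\alpha\ge1$ and $\psi\le1$, the extra power of $\psi$ can be dropped. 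The hypothesis $q>2+\frac{2p}{p_0}$ gives $\frac{2(p-1)}{q-2}<p_0$, so $\gamma<p+p_0$.

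\textbf{Step 3: interpolation and closing.} For any $\beta'\in[1,p+p_0)$ I would fix $\kappa\in(0,1)$ close to $1$ and $r$ large, and split
$$\varphi^{\beta'}\psi^\alpha=\big(\varphi^{pr/2}\psi^{\alpha r/2}\big)^{\theta}\varphi^{p_0(1-\theta)}\psi^{\alpha(1-\kappa)},\qquad \theta:=\frac{2\kappa}{r}.$$
Matching exponents of $\varphi$ requires $\beta'=p_0+p\kappa-p_0\theta$. As $r\to\infty$ the right-hand side approaches $p_0+p\kappa$, so any $\beta'<p+p_0$ is attainable for suitable $\kappa,r$; if $\beta'\le p_0$ the statement is trivial by H\"older. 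H\"older's inequality with exponents $\frac1\theta$ and $\frac1{1-\theta}$ then gives
$$\int_\Omega\varphi^{\beta'}\psi^\alpha\le\|\varphi^{p/2}\psi^{\alpha/2}\|_{L^r}^{2\kappa}L^{1-\theta}.$$
Because $\kappa<1$, Young's inequality yields $\delta X+C_\delta$, where $X:=\|\varphi^{p/2}\psi^{\alpha/2}\|_{L^r(\Omega)}^2$. I would use this estimate both for $\beta'=\beta$ and for $\beta'=\gamma$. Lemma \ref{le3.1} with a small $\eta_1$, combined with Step 2, then gives
$$X\le\eta_1\int_\Omega\varphi^{p-2}\psi^\alpha|\nabla\varphi|^2+\eta_1\varepsilon\int_\Omega\varphi\psi^{-q+1}|\nabla\psi|^q+\eta_1C_\varepsilon\big(\delta X+C_\delta\big)+C\Big(\int_\Omega\varphi\Big)^p.$$
Choosing $\delta$ so that $\eta_1C_\varepsilon\delta\le\frac12$ absorbs $X$ and closes the estimate. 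The constants $C_\delta$ involve $L$, which should be compensated by the lower-order term $\int_\Omega\varphi$ through a further H\"older step.

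\textbf{Main obstacle.} I expect the main difficulty to be this self-referential absorption together with precise bookkeeping of how the constants depend on $\eta$. That bookkeeping is what must match the prescribed exponent $\tau$ after the rescaling in Step 1. The exponent condition $\gamma<p+p_0$, by contrast, is exactly what the hypothesis on $q$ is designed to deliver.
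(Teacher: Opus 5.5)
\textbf{Step 3 produces the wrong kind of remainder.} Your Step 2 is exactly the paper's Young step, but Step 3 cannot give an inequality of the stated form.

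Taking $\kappa<1$ and then applying Young, $X^{\kappa}L^{1-\theta}\le \delta X+C_\delta L^{(1-\theta)/(1-\kappa)}$, creates a free additive constant. Your ``trivial by H\"older'' treatment of $\beta'\le p_0$ is no better: it yields at best $C(L)\,(\int_\Omega\varphi)^{(p_0-\beta')/(p_0-1)}$, a sublinear power of $\int_\Omega\varphi$. This case is not marginal, since $\gamma\le p_0$ can occur when $p_0$ is large.

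Neither remainder is admissible. The last term of the lemma, $C\{\sup\psi+1\}^\tau\{\int_\Omega\varphi+(\int_\Omega\varphi)^p\}$, is arbitrarily small when $\int_\Omega\varphi$ is small while $\int_\Omega\varphi^{p_0}$ stays of order $L$. So no further H\"older step can turn $C_\delta$ into a multiple of $\int_\Omega\varphi$: the bound $\int_\Omega\varphi^{p_0}\le L$ may enter only multiplicatively.

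The paper achieves this by effectively taking $\kappa=1$:
\begin{itemize}
\item For $\beta\in[\frac{pq}{q-2},p+p_0)$ and $\rho=\frac{p_0}{p_0+p-\beta}$, H\"older gives $\int_\Omega\varphi^\beta\psi^\alpha\le L^{(\beta-p)/p_0}\|\varphi^{p/2}\psi^{\alpha/2}\|^2_{L^{2\rho}(\Omega)}$. The factor $L^{(\beta-p)/p_0}$ is absorbed into the smallness parameter of Lemma \ref{le3.1}, whose remainder is $(\int_\Omega\varphi)^p(\sup\psi)^\alpha$.
\item The $\varphi^\gamma$ term from your Step 2 is absorbed into the left-hand side, not into $X$. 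After $\psi^{(\alpha q-2)/(q-2)}\le(\sup\psi)^{2(\alpha-1)/(q-2)}\psi^\alpha$, a pointwise Young inequality between $\varphi^\beta$ and $\varphi$ applies, because $\gamma=\frac{pq-2}{q-2}<\frac{pq}{q-2}\le\beta$. It bounds $\frac{\eta}{2}(\sup\psi)^{\frac{2(\alpha-1)}{q-2}}\int_\Omega\varphi^\gamma\psi^\alpha$ by $\frac12\int_\Omega\varphi^\beta\psi^\alpha+c(\sup\psi)^{\frac{2(\alpha-1)(\beta-1)}{(q-2)(\beta-\gamma)}}\int_\Omega\varphi\psi^\alpha$. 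This is where the factor $\int_\Omega\varphi$ comes from.
\item Smaller $\beta$ are reduced to $\beta=\frac{pq}{q-2}$ via $\varphi^\beta\le\varphi^{pq/(q-2)}+\varphi$.
\end{itemize}

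\textbf{The exponent $\tau$ is not established.} You leave it unverified, and Step 1 as you formulate it cannot produce it. Using the single parameter $\tilde\eta=\eta M^{1-\alpha}$ also shrinks the coefficient of $\int_\Omega\varphi^{p-2}\psi^\alpha|\nabla\varphi|^2$.

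To see why that is fatal, test the normalized inequality with $\psi$ constant and a bump $\varphi$ of height $h\simeq\tilde\eta^{-1/(p+p_0-\beta)}$ and $\int_\Omega\varphi^{p_0}\simeq L$. Its constant must then grow at least like $\tilde\eta^{-(\beta-1)/(p+p_0-\beta)}$. You would therefore end with the power $\alpha+(\alpha-1)\frac{\beta-1}{p+p_0-\beta}$, which exceeds $\tau$ as $\beta\to p+p_0$ whenever $\alpha>1$.

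The fix is to rescale only the coefficient of $\int_\Omega\varphi\psi^{-q+1}|\nabla\psi|^q$, to $\varepsilon:=\eta M^{1-\alpha}$. The coefficient of the $\varphi$-gradient term, and hence the parameter in Lemma \ref{le3.1}, should stay independent of $M$. With that choice and the absorption described above, the Young constant $C_\varepsilon\simeq\varepsilon^{-2/(q-2)}$ is raised to the power $\frac{\beta-1}{\beta-\gamma}$. This gives $M^{\alpha+\frac{2(\alpha-1)(\beta-1)}{(q-2)(\beta-\gamma)}}$, which is largest at $\beta=\frac{pq}{q-2}$, where it equals $\tau$. That is the same exponent the paper obtains directly, without normalizing $\psi$.
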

\begin{proof}It is observed that the restriction $q>2+\frac{2p}{p_0}$ ensures that $\frac{pq}{q-2}<p+p_0$.
	Firstly, we verify this lemma in the case
	\begin{equation}\label{3.6}
		\beta \in \left[\frac{pq}{q-2},p+p_0\right).
	\end{equation}
	Let
	$$\rho:=\frac{p_0}{p_0-\beta+p}\quad and \quad \rho_*:=\frac{p_0}{\beta-p}.$$
	Due to \eqref{3.6}, it follows that
	$
		\rho>1$ and $\rho_*>1.
	$
	Thanks to  $\int_{\Omega}\varphi^{p_0}\leq L$, the H\"{o}lder inequality, the Young inequality and Lemma \ref{le3.1},  we conclude that for any $\eta>0$, there exist
	positive constants  $c_1=c_1(\eta,p,\beta,L)$ and $c_2=c_2(\eta,p,q)$  such that
	\begin{align*}
		\int_{\Omega}\varphi^{\beta}\psi^\alpha&=\int_{\Omega}(\varphi^{\frac{p}{2}}\psi^{\frac{\alpha}{2}})^2\varphi^{\beta-p}\nonumber
		\\
		&\leq\|\varphi^{\frac{p}{2}}\psi^{\frac{\alpha}{2}}\|^2_{L^{2\rho}(\Omega)}\cdot\|\varphi^{\beta-p}\|_{L^{\rho_*}(\Omega)}\nonumber
		\\
		&=\|\varphi^{\frac{p}{2}}\psi^{\frac{\alpha}{2}}\|^2_{L^{2\rho}(\Omega)}
		\cdot\|\varphi\|_{L^{p_0}(\Omega)}^{\beta-p}\nonumber
		\\
		&\leq L^{\frac {\beta-p}{p_0}}\|\varphi^{\frac{p}{2}}\psi^{\frac{\alpha}{2}}\|^2_{L^{2\rho}(\Omega)}\nonumber
		\\
		&\leq \frac{\eta}{2}\int_{\Omega}\varphi^{p-2}\psi^\alpha|\nabla
		\varphi|^2
+\frac{\eta}{2}\int_{\Omega}\varphi^{p}\psi^{\alpha-2}|\nabla\psi|^2
+c_1\left\{\int_{\Omega}\varphi\right\}^{p}\cdot
\left\{\sup\limits_{x\in\Omega}\psi(x)\right\}^\alpha\nonumber
		\\
		&\leq \frac{\eta}{2}\int_{\Omega}\varphi^{p-2}\psi^\alpha|\nabla
		\varphi|^2+\frac{\eta}{2}\int_{\Omega}\varphi\frac{|\nabla
			\psi|^q}{\psi^{q-1}}+\frac {\eta}2\int_{\Omega}\varphi^{\frac{qp-2}{q-2}}\psi^{\frac{\alpha q-2}{q-2}}+c_1\left\{\int_{\Omega}\varphi\right\}^{p}
\left\{\sup\limits_{x\in\Omega}\psi(x)\right\}^\alpha\nonumber
		\\
			&\leq \frac{\eta}{2}\int_{\Omega}\varphi^{p-2}\psi^\alpha|\nabla
		\varphi|^2+\frac{\eta}{2}\int_{\Omega}\varphi\frac{|\nabla
			\psi|^q}{\psi^{q-1}}+\frac {\eta}2\left\{\sup\limits_{x\in\Omega}\psi(x)\right\}^{\frac{2(\alpha-1)}{q-2}}\int_{\Omega}\varphi^{\frac{qp-2}{q-2}}\psi^\alpha\\
			&\quad+c_1\left\{\int_{\Omega}\varphi\right\}^{p}
\left\{\sup\limits_{x\in\Omega}\psi(x)\right\}^\alpha\nonumber
		\\
		&\leq \frac{\eta}{2}\int_{\Omega}\varphi^{p-2}\psi^\alpha|\nabla
		\varphi|^2+\frac{\eta}{2}\int_{\Omega}\varphi\frac{|\nabla
			\psi|^q}{\psi^{q-1}}+
		\frac{1}{2}\int_{\Omega}\varphi^{\beta}\psi^\alpha
+c_2\left\{\sup\limits_{x\in\Omega}\psi(x)\right\}^{\frac{2(\alpha-1) (\beta-1)}{(q-2)(\beta-\frac{qp-2}{q-2})}}\int_{\Omega}\varphi\psi^\alpha\\
		&\quad+c_1\left\{\int_{\Omega}\varphi\right\}^{p}
\left\{\sup\limits_{x\in\Omega}\psi(x)\right\}^\alpha\\
		&\leq \frac{\eta}{2}\int_{\Omega}\varphi^{p-2}\psi^\alpha|\nabla
		\varphi|^2+\frac{\eta}{2}\int_{\Omega}\varphi\frac{|\nabla
			\psi|^q}{\psi^{q-1}}+
		\frac{1}{2}\int_{\Omega}\varphi^{\beta}\psi^\alpha
+c_2\left\{\sup\limits_{x\in\Omega}\psi(x)+1\right\}^\frac{\alpha q(\beta-p)-2(\beta-1)}{(q-2)\beta-(qp-2)}\int_{\Omega}\varphi\\
		&\quad+c_1\left\{\int_{\Omega}\varphi\right\}^{p}
\left\{\sup\limits_{x\in\Omega}\psi(x)+1\right\}^\alpha
	\end{align*}
	and thus
	\begin{align}\label{3.7}
		\int_{\Omega}\varphi^{\beta}\psi^\alpha&\leq \eta\int_{\Omega}\varphi^{p-2}\psi^\alpha|\nabla
		\varphi|^2+\eta\int_{\Omega}\varphi{\psi}^{-q+1}|\nabla  \psi|^q\nonumber\\
		&\quad+c_3\left\{\sup\limits_{x\in\Omega}\psi(x)+1\right\}^\tau\left\{\int_{\Omega}\varphi
+\left\{\int_{\Omega}\varphi\right\}^{p}\right\}
	\end{align}
with
$$\tau:=\frac{ pq(\alpha-1)}{q-2}+1=\sup_{\beta\in\left[\frac{pq}{q-2},p+p_0\right)}\big\{\frac{\alpha q(\beta-p)-2(\beta-1)}{(q-2)\beta-(qp-2)}\big\}.
$$
	Furthermore, it is observed that  for $\beta \in [1, \frac{qp}{q-2})$, there exists $c_4=c_4(p,q)>0$ such that $$\int_{\Omega}\varphi^{\beta}\psi^\alpha\leq
	\int_{\Omega}\varphi^{\frac{qp}{q-2}}\psi^\alpha+c_4\int_{\Omega}\varphi\psi^\alpha $$
by the Young inequality. 	The latter along with \eqref{3.7}  completes the proof readily.
\end{proof}

\textbf{Remark.}
Lemma~\ref{le3.2} is the key tool for handling the cross-diffusive term $\phi'(v)u^{p-1}\nabla v\cdot\nabla u$: it enables us to control the mixed integral $\int_\Omega u^\beta v^\alpha$ by the available $u$-related dissipation and the weighted gradient dissipation of $v$ from Lemma~\ref{lemma3.3},
  with parameters chosen precisely to match the degenerate structure.
\vskip1mm

Thanks to the $L^2(\Omega)$-bound of
$u$ from Lemma \ref{L2bound}, we now proceed to bootstrap the integrability of $u$ to arbitrary $L^p(\Omega)$ with $p>2$. This is accomplished by demonstrating that the functional
\begin{equation}\label{3.24}
	\mathcal F(t):=\int_\Omega u^p+\int_\Omega v^{-q+1}|\nabla v|^q
\end{equation}
possesses a quasi-energy structure.

\begin{lemma}\label{le3.6}
	Let $\Omega\subset\mathbb{R}^2$ be a smoothly bounded domain, $\alpha\geq1$.  Then for any $p>2$ and $p+2<q<2p$, one can find a constant $C=C(p,q)$ such that 	for all $t>0$
	\begin{equation}\label{3.17}
			\int_\Omega u^p+\int_\Omega v^{-q+1}|\nabla v|^q\leq C.
	\end{equation}
\end{lemma}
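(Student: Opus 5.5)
We plan to show that $\mathcal F$ from \eqref{3.24} satisfies a differential inequality $\mathcal F'+\mathcal F\le C$, from which \eqref{3.17} follows by ODE comparison.

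\textbf{Step 1: testing the $u$-equation.} We test the first equation in \eqref{1.1} by $pu^{p-1}$. Writing $\Delta(\phi(v)u)=\nabla\cdot(\phi(v)\nabla u+u\phi'(v)\nabla v)$ and using \eqref{1.6} together with the bound $v\le\|v_0\|_{L^\infty}$ from \eqref{2.2}, we expect
\[
\frac{d}{dt}\int_\Omega u^p+c_1\int_\Omega u^{p-2}v^\alpha|\nabla u|^2+pb\int_\Omega u^{p+1}\le c_2\int_\Omega u^{p}v^{\alpha-2}|\nabla v|^2+pa\int_\Omega u^p .
\]
This is obtained by applying Young's inequality to $p(p-1)\int u^{p-1}|\phi'(v)||\nabla u||\nabla v|$, using $|\phi'(v)|\le Cv^{\alpha-1}$, and absorbing half of the result into the dissipation $\phi(v)\ge cv^\alpha$. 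The condition \eqref{1.6} is only stated near $0$, but on $[s_0,\|v_0\|_\infty]$ the function $\phi$ is bounded below and $\phi'$ is bounded, so the global bounds $\phi(s)\ge cs^\alpha$ and $|\phi'(s)|\le Cs^{\alpha-1}$ hold on $(0,\|v_0\|_\infty]$.

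\textbf{Step 2: the cross term.} We split
\[
u^pv^{\alpha-2}|\nabla v|^2=\Big(u^{2/q}v^{-(q-1)\cdot 2/q}|\nabla v|^2\Big)\cdot u^{p-2/q}v^{\alpha-2+2(q-1)/q},
\]
and apply Young's inequality with exponents $q/2$ and $q/(q-2)$. This gives
\[
\delta\int_\Omega u\frac{|\nabla v|^q}{v^{q-1}}+C_\delta\int_\Omega u^{\frac{pq-2}{q-2}}v^{\frac{(\alpha-2)q+2(q-1)}{q-2}} .
\]
The $v$-exponent is $\frac{\alpha q-2}{q-2}\ge\alpha$, because $\alpha\ge1$. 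Since $v$ is bounded, the second integral is therefore at most $C\int_\Omega u^\beta v^\alpha$ with $\beta=\frac{pq-2}{q-2}$.

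We then invoke Lemma \ref{le3.2} with $p_0=2$, $L$ the $L^2$ bound from Lemma \ref{L2bound}, and $\varphi=u$, $\psi=v$. It requires $q>2+p$ and $\beta<p+2$. The second condition reads $\frac{pq-2}{q-2}<p+2$, which is equivalent to $2p-2<2q-4$, i.e. $q>p+1$; this holds. The lemma bounds $\int u^\beta v^\alpha$ by
\[
\eta\int_\Omega u^{p-2}v^\alpha|\nabla u|^2+\eta\int_\Omega u|\nabla v|^qv^{1-q}+C ,
\]
where $C$ is uniform thanks to \eqref{2.1} and \eqref{2.2}.

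\textbf{Step 3: the $v$-functional.} We add \eqref{3.1**} from Lemma \ref{lemma3.3}. Its term $(q-1)^2\int u|\nabla v|^q/v^{q-1}$ absorbs the $\delta$- and $\eta$-terms from Step 2. Its right-hand side contains $C(q)\int u^{(q+2)/2}v$. Since $q<2p$, we have $(q+2)/2<p+1$, so Young's inequality bounds this by $\frac{pb}{2}\int u^{p+1}+C$. The term $pa\int u^p$ is likewise absorbed by $\int u^{p+1}$.

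\textbf{Step 4: closing the inequality.} It remains to produce $+\mathcal F$ on the left-hand side. For the first component, $\int u^{p+1}\ge \int u^p-C$. For the second, Young's inequality gives $\int|\nabla v|^q/v^{q-1}\le \epsilon\int|\nabla v|^{q+2}/v^{q+1}+C_\epsilon\int v$, and the first term is controlled by the dissipation in \eqref{3.1**}. This yields $\mathcal F'+\mathcal F\le C$, and hence \eqref{3.17}.

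\textbf{Main obstacle.} The delicate step is Step 2: one must choose the Young exponents so that the power of $u$ produced stays below $p+p_0=p+2$ and the power of $v$ stays at least $\alpha$. It is exactly this requirement that forces the window $p+2<q<2p$.
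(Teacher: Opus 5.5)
Your proposal is correct and follows the paper's proof in all essentials: the same functional $\mathcal F$, the same $L^p$-testing combined with Lemma~\ref{lemma3.3}, the same appeal to Lemma~\ref{le3.2} with $p_0=2$ and the $L^2$-bound, and the same absorption of $\int_\Omega u^{(q+2)/2}v$ by $\int_\Omega u^{p+1}$ using $q<2p$. The only deviation is the Young splitting of the cross term: you trade it against $\int_\Omega u\,v^{1-q}|\nabla v|^q$, which gives $\beta=\frac{pq-2}{q-2}$ (admissible once $q>p+1$), while the paper trades it against $\int_\Omega v^{-q-1}|\nabla v|^{q+2}$, which gives $\beta=\frac{p(q+2)}{q}$ (admissible once $q>p$); both work, but your closing remark misattributes the window, since $q>p+2$ comes from the hypothesis $q>2+\frac{2p}{p_0}$ of Lemma~\ref{le3.2} and $q<2p$ from your Step~3, not from the Young exponents.
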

\begin{proof}
		From \eqref{2.2} and \eqref{1.6}, it follows that
	there exist positive constants $c_1$ and $c_2$ such that
	\begin{equation}\label{3.17*}
		c_1 v^{\alpha}(x,t)\leq\phi(v(x,t))~~\mbox{and}~|\phi'(v(x,t))|\leq c_2 v^{\alpha-1}(x,t)~~\mbox{for}~(x,t)\in\Omega\times(0,\infty).
	\end{equation}
	Testing the first equation in \eqref{1.1} with $u^{p-1}$ and applying Young's inequality yield
\begin{align}\label{3.16} \frac{1}{p}\frac{d}{dt}\int_{\Omega}u^p&=\int_{\Omega}\Delta(\phi(v) u)u^{p-1}+a\int_{\Omega}u^p-b\int_{\Omega}u^{p+1}\\
		&=-(p-1)\int_{\Omega}\phi(v)u^{p-2}|\nabla u|^2-(p-1)\int_{\Omega}\phi'(v)u^{p-1}\nabla v\cdot\nabla u+a\int_{\Omega}u^p-b\int_{\Omega}u^{p+1}\nonumber\\
		&\leq-\frac{c_1(p-1)}{2}\int_{\Omega}v^\alpha u^{p-2}|\nabla u|^2+\frac{(p-1)c_2^2}{2c_1}\int_{\Omega}u^p v^{\alpha-2}|\nabla v|^2+a\int_{\Omega}u^p-b\int_{\Omega}u^{p+1}\nonumber
\end{align}
for all $t>0$.

 Taking an evident linear combination of  \eqref{3.16} and \eqref{3.1**} with $q\in[2,2p)$ entails that
	\begin{align}\label{3.18}
			\frac{d}{dt}&\bigg(\int_{\Omega}u^p+\int_{\Omega}v^{-q+1}|\nabla v|^{q}\bigg)+\int_{\Omega}u^p+\int_{\Omega}v^{-q+1}|\nabla v|^{q}+\frac{q}{8(q+\sqrt{2})^2}\int_{\Omega}v^{-q-1}|\nabla v|^{q+2}\nonumber\\
			&\quad+\frac{c_1p(p-1)}{2}\int_{\Omega}u^{p-2}v^\alpha |\nabla u|^2+(q-1)^2\int_\Omega u v^{-q+1}|\nabla v|^q\nonumber\\
			&\leq
			\frac{p(p-1)c_2^2}{2c_1}\int_{\Omega}u^p v^{\alpha-2}|\nabla v|^2+\int_{\Omega}v^{-q+1}|\nabla v|^{q}+(ap+1)\int_{\Omega}u^p-bp\int_{\Omega}u^{p+1}\\
			&\quad  +c_3(q)\bigg(\int_{\Omega}u^{\frac{q+2}{2}}v+\int_\Omega v\bigg)\nonumber\\
				&\leq
			\frac{p(p-1)c_2^2}{2c_1}\int_{\Omega}u^p v^{\alpha-2}|\nabla v|^2+\int_{\Omega}v^{-q+1}|\nabla v|^{q}+c_4\bigg(\int_{\Omega}u+\int_{\Omega}v\bigg)-\frac{3bp}{4}\int_{\Omega}u^{p+1}+c_5\int_{\Omega}u^{\frac{q+2}{2}}\nonumber\\
			&\leq
			\frac{p(p-1)c_2^2}{2c_1}\int_{\Omega}u^p v^{\alpha-2}|\nabla v|^2+\int_{\Omega}v^{-q+1}|\nabla v|^{q}-\frac{bp}{2}\int_{\Omega}u^{p+1}+c_6\bigg(\int_{\Omega}u+\int_{\Omega}v\bigg),\nonumber
	\end{align}
	for some $c_i>0~(i=3,4,5,6)$.
We invoke Young's inequality to see that
\begin{equation}
	\begin{split}
		\int_{\Omega}v^{-q+1}|\nabla v|^{q}&=\int_{\Omega}\bigg\{\frac{q}{16(q+\sqrt{2})^2}v^{-q-1}|\nabla v|^{q+2}\bigg\}^{\frac{q}{q+2}}\cdot\bigg\{\big(\frac{16(q+\sqrt{2})^2}{q}\big)^{\frac{q}{q+2}}v^{\frac{2}{q+2}}\bigg\}\\
		&\leq\frac{q}{16(q+\sqrt{2})^2}\int_{\Omega}v^{-q-1}|\nabla v|^{q+2}+c_7(q)\int_{\Omega} v
	\end{split}
\end{equation}
and
\begin{align}\label{3.19}
		&\quad\frac{p(p-1)c_2^2}{2c_1}\int_{\Omega}u^p v^{\alpha-2}|\nabla v|^2\nonumber\\
		&\leq\frac{q}{16(q+\sqrt{2})^2}\int_{\Omega}v^{-q-1}|\nabla v|^{q+2}+c_8(p,q)\int_\Omega u^{\frac{p(q+2)}{q}}v^{\frac{\alpha(q+2)-2}{q}}\\
		&\leq\frac{q}{16(q+\sqrt{2})^2}\int_{\Omega}v^{-q-1}|\nabla v|^{q+2}+c_8(p,q)\|v_0\|_{L^{\infty}(\Omega)}^{\frac{2(\alpha-1)}{q}}\int_\Omega u^{\frac{p(q+2)}{q}}v^\alpha.\nonumber
\end{align}
It is observed that for any fixed $p>2$, one can find $q>1$ such that
	\begin{equation*}
		q\in(p+2,2p)~~\mbox{and}~~
		\frac{p(q+2)}{q}<p+2.
	\end{equation*}
Hence, thanks to \eqref{3.8}, the application of Lemma \ref{le3.2} to $\beta:=	\frac{p(q+2)}{q}$ and $p_0:=2$ yields
	\begin{align}\label{3.23}
		c_8(p,q)\|v_0\|_{L^{\infty}(\Omega)}^{\frac{2(\alpha-1)}{q}}\int_\Omega u^{\frac{p(q+2)}{q}}v^\alpha
		&\leq c_9(p,q)\int_\Omega u^{\frac{p(q+2)}{q}}v^\alpha\nonumber\\
		&\leq\frac{c_1p(p-1)}{4} \int_{\Omega}u^{p-2}v^\alpha|\nabla
		u|^2+\frac{(q-1)^2}{2} \int_{\Omega}u{v}^{-q+1}|\nabla v|^q\\
		&\quad+c_{10}\left\{\sup\limits_{x\in\Omega}v(x)+1\right\}^{\frac{ pq(\alpha-1)}{q-2}+1}\left\{\int_{\Omega}u+\left(\int_{\Omega}u\right)^{p}\right\}.\nonumber
	\end{align}
		From \eqref{3.18}-\eqref{3.23}, it follows that there exists $c_{11}>0$ such that
		\begin{align*}
			&~~\frac{d}{dt}\bigg(\int_{\Omega}u^p+\int_{\Omega}v^{-q+1}|\nabla v|^{q}\bigg)+\int_{\Omega}u^p+\int_{\Omega}v^{-q+1}|\nabla v|^{q}\nonumber\\
			&\quad+\frac{c_1p(p-1)}{4}\int_{\Omega}u^{p-2}v^\alpha |\nabla u|^2+\frac{(q-1)^2}{2}\int_\Omega u v^{-q+1}|\nabla v|^q+\frac{bp}{2}\int_{\Omega}u^{p+1}\\
			&\leq c_{10}\left\{\sup\limits_{x\in\Omega}v(x)+1\right\}^{\frac{\alpha pq}{q-2}+1}\left\{\int_{\Omega}u+\left(\int_{\Omega}u\right)^{p}\right\}+c_{11}\bigg(\int_{\Omega} v+\int_{\Omega} u\bigg).
\nonumber
		\end{align*}
Therefore, due to \eqref{2.1} and \eqref{2.2}, there exists some constant $c_{12}>0$ satisfying
		\begin{equation}\label{3.26*}
			\frac{d}{dt}\mathcal{F}(t)+\mathcal{F}(t)\leq c_{12},
		\end{equation}
	which, via an ordinary differential inequality argument, implies the uniform boundedness of $\mathcal{F}(t)$ and thus leads to \eqref{3.17} readily.
\end{proof}

The pointwise gradient estimate $|\nabla v|\le Cv^{1-r}$ below will allow us
 to carry out the Moser iteration and derive the desired $L^\infty$-bound for $u$.

\begin{lemma}\label{lem3.5}
	Let $\Omega\subset\mathbb{R}^{2}$ be a smoothly bounded domain and $r\in(0,1)$. Then there exists $C>0$ such that for all $x\in\Omega$ and $t>0$,
	$$
	|\nabla v(x,t)|\leq C v^{1-r}(x,t).
	$$
\end{lemma}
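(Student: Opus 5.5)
We prove the pointwise bound through the substitution $w:=v^{r}$, or equivalently by working with $z:=v^{r}$ and showing that $\nabla z$ is uniformly bounded. Indeed $\nabla z = r v^{r-1}\nabla v$, so $\|\nabla z(\cdot,t)\|_{L^\infty(\Omega)}\le C$ gives $|\nabla v|\le (C/r)\,v^{1-r}$, which is the claim. The plan therefore reduces to a uniform $W^{1,\infty}$ estimate for $z$. We derive it from the parabolic equation that $z$ satisfies, using the integrability of $u$ from Lemma \ref{le3.6} together with the weighted gradient bound $\int_\Omega v^{-q+1}|\nabla v|^q\le C$ for large $q$.

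\textbf{Step 1: the equation for $z$.} From the second equation in \eqref{1.1},
\begin{equation*}
z_t=\Delta z+\frac{1-r}{r}\,\frac{|\nabla z|^2}{z}-r\,uz,
\qquad \partial_\nu z=0 \text{ on } \partial\Omega .
\end{equation*}
Since $\frac{|\nabla z|^2}{z}=r^2 v^{r-2}|\nabla v|^2$, we write the right-hand side as $\Delta z+ h$ with
\[
h:=r(1-r)\,v^{r-2}|\nabla v|^2-r\,u v^{r}.
\]
By \eqref{2.2}, $|uv^r|\le C u$. For the first term in $h$ we use Hölder's inequality:
\[
\int_\Omega \big(v^{r-2}|\nabla v|^2\big)^{s}
=\int_\Omega \left(\frac{|\nabla v|^{q}}{v^{q-1}}\right)^{2s/q} v^{\,s(r-2)+\frac{2s(q-1)}{q}} .
\]
The exponent of $v$ equals $s\big(r-\frac{2}{q}\big)$, which is nonnegative once $q\ge 2/r$. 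Taking $s=q/2$ therefore gives
\[
\|v^{r-2}|\nabla v|^2\|_{L^{q/2}(\Omega)}^{q/2}\le \|v_0\|_{L^\infty(\Omega)}^{\frac q2(r-\frac 2q)}\int_\Omega v^{-q+1}|\nabla v|^q\le C.
\]
Lemma \ref{le3.6} provides this bound for every large $q$: choose $p>2$ large and then $q\in(p+2,2p)$. Together with the $L^p$ bound on $u$ for all $p$, this yields $\sup_{t>0}\|h(\cdot,t)\|_{L^{\theta}(\Omega)}\le C$ for some $\theta>2$, and in fact for any prescribed finite $\theta$.

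\textbf{Step 2: semigroup estimate.} Using the variation-of-constants formula on unit time intervals,
\[
z(t)=e^{\Delta}z(t-1)+\int_{t-1}^{t}e^{(t-s)\Delta}h(s)\,ds,
\]
and the Neumann heat semigroup estimate $\|\nabla e^{\sigma\Delta}f\|_{L^\infty}\le C(1+\sigma^{-\frac12-\frac1\theta})\|f\|_{L^\theta}$ from \cite{Winkler(2010)}, which is integrable in $\sigma$ for $\theta>2$, we obtain $\|\nabla z(t)\|_{L^\infty(\Omega)}\le C$ for $t\ge1$. Here $\|e^{\Delta}z(t-1)\|_{W^{1,\infty}}\le C\|z\|_{L^\infty}\le C$. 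For $t\in(0,1)$ we use the same formula started from $z_0=v_0^r$, which lies in $W^{1,\infty}(\Omega)$ because $v_0>0$ in $\overline\Omega$ and $v_0\in W^{1,\infty}(\Omega)$. This gives $\|\nabla z\|_{L^\infty}\le \|\nabla z_0\|_{L^\infty}+C$ on that interval as well.

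\textbf{Main obstacle.} The delicate point is that the nonlinear term $|\nabla z|^2/z$ is singular where $z$ is small. It has to be controlled through the a priori weighted bound rather than through $z$ itself, and this is why $q\ge 2/r$ must be available, that is, why Lemma \ref{le3.6} must hold for arbitrarily large $q$. A possible secondary issue is the case $r\ge 1/2$, where the coefficient $(1-r)/r$ is small. This is harmless, since it only affects constants. For $r$ close to $0$, the requirement $q\ge 2/r$ becomes large but remains admissible, because Lemma \ref{le3.6} covers all $p>2$ and hence every $q$ in the corresponding ranges.
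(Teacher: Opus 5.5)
Your proposal is correct and follows essentially the paper's proof: the same substitution $z=v^{r}$ with the same source term $h=r(1-r)v^{r-2}|\nabla v|^2-ruv^r$, the same use of Lemma~\ref{le3.6} with $q\ge 2/r$ to bound $h$ in $L^\theta(\Omega)$ for some $\theta>2$, and the same Neumann heat semigroup gradient estimate. The only differences are cosmetic: you work on unit time intervals and take $\theta=q/2$ directly, whereas the paper integrates from $0$ using the factor $e^{-\lambda(t-s)}$ and takes $\theta<k/2$ with a H\"older step. Also, for $t\in(0,1)$ you should write $C\|\nabla z_0\|_{L^\infty(\Omega)}$ rather than $\|\nabla z_0\|_{L^\infty(\Omega)}$, since a constant factor is generally needed unless $\Omega$ is convex.
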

\begin{proof}
		According to the second equation of \eqref{1.1}, we can see that for all $t>0$
	\begin{equation}\label{4.1*}
		\partial_t v^r = \Delta v^r + h(x,t),
	\end{equation}
	where
	$
	h(x,t):= r(1-r) v^{r-2}|\nabla v|^2 - ruv^r.
	$
By the variation-of-constants formula, we have
	\begin{equation}\label{4.1a}
	v^r(\cdot,t)=e^{t\Delta}v_0^r+\int_0^t e^{(t-s)\Delta}h(\cdot,s)\,ds\quad\;for\; all\; t>0.
	\end{equation}
Due to the fact that $r<1$, for any $\theta\ge2$,
	\begin{equation}\label{4.2*}
		\int_\Omega |h|^{\theta}
		\leq 2^\theta\int_\Omega v^{(r-2)\theta}|\nabla v|^{2\theta}+2^{\theta}\int_\Omega u^\theta v^{r\theta}.
	\end{equation}
	Now fix $k > \max\{4,\frac{2}{r}\}$ and choose $\theta\in (2,\frac{k}{2})$. Since $k>4$, we can choose some $p>2$ such that $p+2<k< 2p$. Therefore, Lemma \ref{le3.6} applies with this choice of $p$ and $q=k$, yielding  $$\int_\Omega \frac{|\nabla v|^k}{v^{k-1}}\leq c_1.$$ By H\"older's inequality and \eqref{2.2}, there exist constants $c_2,c_3>0$ such that for all $t>0$,

\begin{align}\label{4.4*}
	 2^{\theta}\int_{\Omega} v^{(r-2)\theta}|\nabla v|^{2\theta}
		&= 2^{\theta}\int_{\Omega}\left\{\frac{|\nabla v|^k}{v^{k-1}}\right\}^{\frac{2\theta}{k}}\cdot v^{\frac{2(k-1)\theta}{k}+(r-2)\theta}\nonumber\\
		&\leq 2^{\theta}\left(\int_{\Omega}\frac{|\nabla v|^k}{v^{k-1}}\right)^{\frac{2\theta}{k}}\cdot\left(\int_{\Omega} v^{\left[\frac{2(k-1)\theta}{k}+(r-2)\theta\right]\cdot\frac{k}{k-2\theta}}\right)^{\frac{k-2\theta}{k}}\\
		&\leq 2^{\theta}|\Omega|^{\frac{k-2\theta}{k}}\|v_0\|_{L^{\infty}(\Omega)}^{\frac{2(k-1)\theta}{k}+(r-2)\theta}\left(\int_{\Omega}\frac{|\nabla v|^k}{v^{k-1}}\right)^{\frac{2\theta}{k}}\nonumber\\
		&\leq c_2,\nonumber
		\end{align}
	and
\begin{equation}\label{4.5*}
		2^{\theta}\int_\Omega u^{\theta}v^{r\theta}\leq 2^{\theta}\|v_0\|_{L^\infty(\Omega)}^{r\theta}\int_\Omega u^{\theta}\leq c_3,
	\end{equation}
	where $\frac{2(k-1)\theta}{k}+(r-2)\theta= \theta(r-\frac{2}{k})>0$.

Applying the smoothing estimates for the Neumann heat semigroup from \cite{Winkler(2010)} to \eqref{4.1a} and \eqref{1.8}, we obtain constants $c_4,c_5>0$ such that for all $t>0$,
\begin{align}
		\|\nabla v^r(\cdot,t)\|_{L^{\infty}(\Omega)}
		&= \left\| \nabla e^{t\Delta}  v_0^r + \int_0^t \nabla e^{(t-s)\Delta} h(\cdot,s)\,ds \right\|_{L^{\infty}(\Omega)} \nonumber\\
		&\leq c_4 \|\nabla  v_0^r
\|_{L^{\infty}(\Omega)} + c_4 \int_0^t (1+(t-s)^{-\frac{1}{2}-\frac{1}{\theta}}) e^{-\lambda(t-s)} \|h(\cdot,s)\|_{L^{\theta}(\Omega)}\,ds \\
		&\leq c_4 \|\nabla  v_0^r
\|_{L^{\infty}(\Omega)}
 + c_4 (c_2+c_3)^{\frac 1 {\theta} } \int_0^t (1+(t-s)^{-\frac{1}{2}-\frac{1}{\theta}}) e^{-\lambda(t-s)}\,ds \nonumber\\
		&\leq c_5,\nonumber
	\end{align}
	due to $\int_0^{\infty} \sigma^{-\frac{1}{2}-\frac{1}{\theta}} e^{-\lambda\sigma}d \sigma<\infty$ and $\|\nabla v_0^r\|_{L^\infty(\Omega)}<\infty$.
Thus $\|\nabla v^r(\cdot,t)\|_{L^\infty(\Omega)}\le c_5$ for all $t>0$, which yields
	$$
	|\nabla v(x,t)| = r^{-1} v^{1-r}(x,t)|\nabla v^r(x,t)|
	\le \frac{c_5}{r} v^{1-r}(x,t).
	$$
	This completes the proof.
\end{proof}

With the above preparation, we are now in a position to establish the announced $L^\infty(\Omega)$-bound for $u$ by means of the following Moser-type iterative argument. Our procedure will particularly involve a modified recursive inequality from Lemma 6.3 of  \cite{WinklerJDE}.
\begin{lemma}\label{lemma3.9}
	Let $\tau\geq1,\rho\geq1,q\geq0$ and $\{N_k\}_{k\in\{1,2,3,...\}}\subset[1,\infty)$ be such that
	\begin{equation*}
		N_{k}\leq \rho^k N_{k-1}^{2+q\cdot 2^{-k}}+\rho^k \cdot\tau^{2^k}~~~~\mbox{for all}~k\geq1.
	\end{equation*}
	Then
	\begin{equation*}
		\liminf\limits_{k\to\infty}N_k^{\frac{1}{2^k}}\leq(2\sqrt{2}\rho^3\tau^{1+\frac{q}{2}}N_0)^{e^{\frac{q}{2}}}.
	\end{equation*}
\end{lemma}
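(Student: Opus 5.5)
The natural route is to take logarithms and analyze the resulting linear recursion for the exponents. We may assume \(N_0\ge1\) is given, as the bound involves \(N_0\). Set \(a_k:=2^{-k}\ln N_k\ge0\). Since all quantities are at least \(1\), the elementary inequality \(x+y\le 2\max\{x,y\}\) turns the hypothesis into
\begin{equation*}
N_k\le 2\rho^k\max\Big\{N_{k-1}^{2+q2^{-k}},\ \tau^{2^k}\Big\}.
\end{equation*}
Taking logarithms and dividing by \(2^k\) gives
\begin{equation*}
a_k\le \frac{\ln 2+k\ln\rho}{2^k}+\max\Big\{\big(1+q2^{-k-1}\big)a_{k-1},\ \ln\tau\Big\}.
\end{equation*}
Writing \(b_k:=\max\{a_k,\ln\tau\}\), and using \(1+q2^{-k-1}\ge1\) together with \(\ln\tau\ge0\), this yields the one-step recursion
\begin{equation*}
b_k\le \big(1+q2^{-k-1}\big)b_{k-1}+\frac{\ln2+k\ln\rho}{2^k}.
\end{equation*}

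The next step iterates this linear inequality. Set \(P_k:=\prod_{j=1}^k(1+q2^{-j-1})\). From \(\ln(1+x)\le x\) we get \(P_k\le\exp(q\sum_{j\ge1}2^{-j-1})=e^{q/2}\). Unrolling the recursion then gives
\begin{equation*}
b_k\le P_k\Big(b_0+\sum_{j=1}^k\frac{\ln2+j\ln\rho}{2^j}\Big)\le e^{q/2}\big(b_0+\ln 2+2\ln\rho\big),
\end{equation*}
because \(\sum_{j\ge1}2^{-j}=1\) and \(\sum_{j\ge1} j2^{-j}=2\).

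Finally, \(b_0=\max\{\ln N_0,\ln\tau\}\le \ln N_0+\ln\tau\) since both terms are nonnegative. Hence
\begin{equation*}
a_k\le b_k\le e^{q/2}\ln\big(2\rho^2\tau N_0\big).
\end{equation*}
This is bounded above by \(e^{q/2}\ln(2\sqrt2\rho^3\tau^{1+q/2}N_0)\), since \(\rho,\tau\ge1\). Exponentiating gives \(N_k^{2^{-k}}\le(2\sqrt2\rho^3\tau^{1+q/2}N_0)^{e^{q/2}}\) for every \(k\), which is stronger than the claimed \(\liminf\) bound.

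The only delicate point is handling the additive \(\tau^{2^k}\) term through the maximum. This must be done without letting the prefactor \(1+q2^{-k-1}\) multiply \(\ln\tau\) in an uncontrolled way. Passing to \(b_k\) resolves this, because the growth is then absorbed into the same product \(P_k\). The constant we obtain is in fact sharper than the one stated, so the stated extra powers \(\sqrt2\), \(\rho\) and \(\tau^{q/2}\) merely leave slack.
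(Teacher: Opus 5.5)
Your proof is correct. The paper gives no argument of its own for this lemma; it only refers to a modified version of Lemma 6.3 in \cite{WinklerJDE}. Your logarithmic linearization is therefore a self-contained proof rather than a variant of something in the text.

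The key device is sound. You pass from $a_k=2^{-k}\ln N_k$ to $b_k=\max\{a_k,\ln\tau\}$. Because $b_{k-1}\ge\ln\tau\ge0$ and $1+q2^{-k-1}\ge1$, both entries of the maximum are dominated by $(1+q2^{-k-1})b_{k-1}$. So the additive $\tau^{2^k}$ term is absorbed into the same product $P_k\le e^{q/2}$ instead of being amplified.

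This buys something over the bare statement. You bound $N_k^{2^{-k}}$ for every $k$, not only the $\liminf$, and you get the sharper constant $(2\rho^2\tau N_0)^{e^{q/2}}$. The factors $\sqrt2$, $\rho$ and $\tau^{q/2}$ in the stated constant are pure slack.

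One remark on your justification for $N_0\ge1$ (``the bound involves $N_0$''): it undersells the point. The statement only imposes $N_k\ge1$ for $k\ge1$, but $N_0\ge1$ is genuinely necessary. Take $\rho=\tau=1$, $q=0$, $N_0=\frac1{10}$ and $N_k=1$ for $k\ge1$. The hypothesis holds, while the asserted bound $\frac{2\sqrt2}{10}<1$ fails. Your proof uses this assumption exactly in the step $b_0\le\ln N_0+\ln\tau$. It is satisfied in the paper's application thanks to the $+1$ in \eqref{3.36}.
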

We now apply the above recursive inequality to prove the $L^\infty$-bound for $u$.

\begin{lemma}\label{le3.8Moser}
	Let $\Omega\subset\mathbb{R}^2$ be a smoothly bounded domain.  Then there exists a constant $C>0$ such that for all $t>0$,
	\begin{equation*}
		\|u(\cdot,t)\|_{L^\infty(\Omega)}\leq C.
	\end{equation*}

\end{lemma}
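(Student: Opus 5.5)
We argue by a Moser iteration on $\int_\Omega u^{p_k}$ with $p_k:=2^k$, $k\ge k_0$, using the recursion of Lemma \ref{lemma3.9}. The key observation is that the dangerous cross term becomes harmless once the pointwise bound of Lemma \ref{lem3.5} is used. Fix $r\in(0,\min\{1,\alpha/2\})$ (or any $r\in(0,1)$ when $\alpha\ge 2$). Then $|\nabla v|\le c v^{1-r}$, and together with \eqref{3.17*} and $v\le\|v_0\|_{L^\infty(\Omega)}$ this gives
\[
v^{\alpha-2}|\nabla v|^2\le c\,v^{\alpha-2r}\le c'\,v^{\alpha}\cdot v^{-2r}.
\]
We will instead absorb the term using only the degenerate weight.

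\textbf{Step 1 (one-step inequality).} Testing by $u^{p-1}$ as in \eqref{3.16} gives
\[
\frac{d}{dt}\int_\Omega u^p+\frac{c_1p(p-1)}{2}\int_\Omega v^\alpha u^{p-2}|\nabla u|^2+bp\int_\Omega u^{p+1}\le c\,p^2\int_\Omega u^p v^{\alpha-2r}+ap\int_\Omega u^p.
\]
Since $\alpha-2r>0$ and $v$ is bounded, the first term on the right is at most $c p^2\int_\Omega u^p$. So the cross-diffusive contribution reduces to a pure $L^p$ term with a polynomial-in-$p$ coefficient, and the weighted gradient dissipation is not even needed to absorb it. The right-hand side $cp^2\int_\Omega u^p$ is then controlled by the logistic term. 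By Young's inequality,
\[
cp^2\int_\Omega u^p\le \frac{bp}{2}\int_\Omega u^{p+1}+C p^{p+1}|\Omega|,
\]
but this produces a constant $\sim p^{p+1}$, which after taking $p$-th roots grows like $p$, so it cannot close an iteration. We therefore interpolate instead: $\int_\Omega u^{p}\le \epsilon\int_\Omega u^{p+1}+\ldots$ is not enough, and we must pass through $\int_\Omega u^{p/2}$. We use
\[
\int_\Omega u^p\le \Big(\int_\Omega u^{p+1}\Big)^{\frac{p}{p+2}}\Big(\int_\Omega u^{p/2}\Big)^{\frac{2}{p+2}},
\]
which is H\"older's inequality since $p=\tfrac{p}{p+2}(p+1)+\tfrac{2}{p+2}\cdot\tfrac p2$. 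Young's inequality then gives
\[
(cp^2+1)\int_\Omega u^p\le \frac{bp}{2}\int_\Omega u^{p+1}+C\rho^{k}\Big(\int_\Omega u^{p/2}\Big)^{2}
\]
with $p=2^k$, the constant growing at most geometrically in $k$ because the Young exponents $\tfrac{p+2}{p}$ and $\tfrac{p+2}{2}$ produce factors like $(cp)^{(p+2)/2}\cdot p^{-p/2}\le \rho^k$ after the natural bookkeeping. This is the step to check most carefully.

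\textbf{Step 2 (ODE comparison and iteration).} Adding $\int_\Omega u^p$ to both sides and dropping the nonnegative terms, we obtain
\[
y_k'+y_k\le C\rho^k\Big(\sup_{s>0}\int_\Omega u^{p_{k-1}}(\cdot,s)\Big)^2,\qquad y_k(t):=\int_\Omega u^{p_k}(\cdot,t),
\]
and hence
\[
N_k:=\max\Big\{1,\ \sup_{t>0}y_k(t)\Big\}\le \rho^k N_{k-1}^2+\rho^k\Big(\int_\Omega u_0^{p_k}+1\Big)\le \rho^kN_{k-1}^2+\rho^k\tau^{2^k}
\]
with $\tau:=\|u_0\|_{L^\infty(\Omega)}+1$ (enlarging $\tau$ to absorb $|\Omega|$). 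The starting value $N_{k_0}$ is finite by Lemma \ref{le3.6}. Lemma \ref{lemma3.9} with $q=0$ then yields
\[
\liminf_{k\to\infty}N_k^{1/2^k}<\infty,
\]
and since $\|u(\cdot,t)\|_{L^\infty(\Omega)}=\lim_k\|u(\cdot,t)\|_{L^{p_k}(\Omega)}\le\liminf_k N_k^{1/2^k}$, the uniform $L^\infty$ bound follows.

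The main obstacle is Step 1's constant tracking: the factor $p^2$ coming from the cross term must only cost a geometric factor $\rho^k$. If the interpolation above loses too much, the fallback is to use the dissipation $\int_\Omega v^\alpha u^{p-2}|\nabla u|^2$ together with Lemma \ref{le3.1}, applied with $\varphi=u^{1/2}$-type substitutions, in the standard Gagliardo--Nirenberg way. If $\alpha<2r$ cannot be avoided, this would be exactly where the $q\cdot2^{-k}$ exponent of Lemma \ref{lemma3.9} enters.
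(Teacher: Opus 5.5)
Your Step 1 does not close, and the problem is structural rather than a matter of bookkeeping. Once you bound $v^{\alpha-2r}$ by a constant and drop the gradient term, the cross-diffusive contribution carries the coefficient $cp^2$, while the logistic term carries only $bp$. The $p$-th inequality therefore only forces $u\lesssim cp/b$. Concretely, in your Young step with $\theta=\frac{p}{p+2}$ and $\frac{bp}{2}\int_\Omega u^{p+1}$ absorbed, the leftover coefficient of $\int_\Omega u^{p/2}$ is
\[
\frac{2}{p+2}\,(cp^2)^{\frac{p+2}{2}}\Big(\frac{b(p+2)}{2}\Big)^{-\frac p2},
\]
which is of order $(2c/b)^{p/2}p^{p/2+1}$. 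It is not $(cp)^{(p+2)/2}p^{-p/2}$; you lost a power of $p$ in the base, and even that expression is not bounded by $\rho^k$. With $p=2^k$, the factor $p^{p/2}=2^{k2^{k-1}}$ can neither be written as $\rho^k$ nor be absorbed into $\tau^{2^k}$. So Lemma \ref{lemma3.9} does not apply, and iterating gives only $\|u\|_{L^{p_k}(\Omega)}\lesssim p_k$.

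No other interpolation between Lebesgue norms can repair this. Take an exponentially distributed profile $E$ and set $U:=\frac{c}{2b}E$. Then $\int_\Omega U^{p+1}=\frac{c}{2b}(p+1)\int_\Omega U^p$, hence $cp^2\int_\Omega U^p-bp\int_\Omega U^{p+1}\ge 0$ for all $p\ge1$. So the inequalities you retain are satisfied by an unbounded function whose $L^p$ norms are finite and grow like $p$. Your remark that the weighted dissipation ``is not even needed'' is exactly backwards.

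What is missing is the degenerate dissipation, and to use it you must keep the weight $v^{\alpha-2r}$. The dissipation controls only $\nabla(u^{p/2}v^{\alpha/2})$ and says nothing where $v\approx0$, so replacing $v^{\alpha-2r}$ by a constant destroys precisely the link you need. The paper writes the dissipation as $\frac{c_1}{4}\int_\Omega|\nabla w|^2$ with $w:=u^{p_k/2}v^{\alpha/2}$ and a $k$-independent constant. It takes $r<\min\{1,\alpha/4\}$ and applies H\"older:
\[
\int_\Omega u^{p_k}v^{\alpha-2r}\le\|w\|_{L^s(\Omega)}^{\frac{2(\alpha-2r)}{\alpha}}\Big(\int_\Omega u^{p_k/2}\Big)^{\frac{4r}{\alpha}},\qquad s:=\frac{2(\alpha-2r)}{\alpha-4r}.
\]
This is followed by the two-dimensional Gagliardo--Nirenberg inequality $\|w\|_{L^s}^{2(\alpha-2r)/\alpha}\le c\|\nabla w\|_{L^2}\|w\|_{L^1}^{(\alpha-4r)/\alpha}+c\|w\|_{L^1}^{2(\alpha-2r)/\alpha}$, together with $\|w\|_{L^1}\le\|v_0\|_{L^\infty}^{\alpha/2}N_{k-1}$. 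Young's inequality then gives
\[
c_3p_k^2\int_\Omega u^{p_k}v^{\alpha-2r}\le\frac{c_1}{4}\int_\Omega|\nabla w|^2+c_6p_k^4N_{k-1}^2,
\]
with only a polynomial factor. The logistic term is used only for $p_k(a+b)\int_\Omega u^{p_k}-bp_k\int_\Omega u^{p_k+1}\le p_k(a+b)|\Omega|\big(\frac{a+b}{b}\big)^{p_k}$, where the ratio of coefficients does not depend on $p_k$; your $\tau$ must therefore also include $\frac{a+b}{b}$.

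Your stated fallback points in this direction, but as written it does not supply this step: Lemma \ref{le3.1} has a constant depending on $p$ in an untracked way, and you have already discarded the weight. With this estimate in place, your Step 2 goes through essentially as you wrote it.
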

\begin{proof}
For arbitrary $T>0$, set $p_k:=2^k$ for $k\geq0$, and define	
	\begin{equation}\label{3.36}
	N_k(T):=\sup\limits_{0<t\leq T}\int_\Omega u^{p_k}(\cdot,t)+1,\qquad k\in\{0,1,2,\cdots\}.
\end{equation}
By Proposition \ref{pro1.1}, the solution is classical on $[0,T]$, whence $N_k(T)<\infty$ for each fixed $k$ and $T$. Moreover, according to Lemma \ref{le2.4}, $N_0(T)\le m+1$ with $m$ independent of $T$.

We shall derive a $T$-independent recursive inequality for $N_k(T)$  for $k\geq1$; the desired  uniform bound
will then follow by letting $T\to\infty$.
To this end, we recall from \eqref{1.6} and \eqref{2.2} that
\begin{equation*}
	c_1 v^{\alpha}(x,t)\leq\phi(v(x,t))~~\mbox{and}~~|\phi'(v(x,t))|\leq c_2 v^{\alpha-1}(x,t)~~\mbox{for}~(x,t)\in\Omega\times(0,\infty)
\end{equation*}
for some constants $c_1>0$ and $c_2>0$.

Testing the first equation in \eqref{1.1} with $p_k u^{p_k-1}$, we  arrive at
	\begin{align*}
		\frac{d}{dt}\int_{\Omega}u^{p_k}+b\int_{\Omega}u^{p_k}
		&=p_k\int_{\Omega}\Delta(\phi(v) u)u^{p_k-1}+(ap_k+b)\int_{\Omega}u^{p_k}-bp_k\int_{\Omega}u^{p_k+1}\nonumber\\
		&\leq-c_1p_k(p_k-1)\int_{\Omega}u^{p_k-2}v^\alpha|\nabla u|^2
		+c_2 p_k(p_k-1)\int_{\Omega}v^{\alpha-1}u^{p_k-1}|\nabla v||\nabla u|\nonumber\\
		&\quad+p_k(a+b)\int_{\Omega}u^{p_k}-bp_k\int_{\Omega}u^{p_k+1}\\
		&\leq-\frac{c_1p_k(p_k-1)}{2}\int_{\Omega}u^{p_k-2}v^\alpha|\nabla u|^2+\frac{p_k(p_k-1)c_2^2}{2c_1}\int_{\Omega}v^{\alpha-2}u^{p_k}|\nabla v|^2\\
		&\quad+p_k(a+b)\int_{\Omega}u^{p_k}-bp_k\int_{\Omega}u^{p_k+1}.\nonumber
	\end{align*}
Thanks to Lemma \ref{lem3.5}, there exists $c_r>0$ such that we have
	\begin{align}			&\frac{d}{dt}\int_{\Omega}u^{p_k}+b\int_{\Omega}u^{p_k}+\frac{c_1}{4}\int_\Omega\big|\nabla(u^{\frac{p_k}{2}}
v^{\frac{\alpha}{2}})\big|^2\\
		\leq&\big(\frac{p_k(p_k-1)c_2^2}{2c_1}+\frac{c_1\alpha^2}{4}\big)\int_{\Omega}u^{p_k}v^{\alpha-2}|\nabla v|^2
		+p_k(a+b)\int_{\Omega}u^{p_k}-bp_k\int_{\Omega}u^{p_k+1}\nonumber\\
		\leq&c_{\tau}\big(\frac{p_k(p_k-1)c_2^2}{2c_1}+\frac{c_1\alpha^2}{4}\big)\int_{\Omega}u^{p_k}v^{\alpha-2r}
		+p_k(a+b)\int_{\Omega}u^{p_k}-bp_k\int_{\Omega}u^{p_k+1}\nonumber
		\end{align}
			for all $r\in(0,1)$.
	Since $p_k(p_k-1)\leq p_k^2$ and $\frac{p_k(p_k-1)}{2}\geq\frac{p_k^2}{4}$,  there exists a constant $c_3>0$ independent of $k$ and $T$ such that
	\begin{equation}\label{3.37}
		\begin{split}
			&\frac{d}{dt}\int_\Omega u^{p_k}+b\int_\Omega u^{p_k}+\frac{c_1}{4}\int_\Omega\big|\nabla(u^{\frac{p_k}{2}}v^{\frac{\alpha}{2}})\big|^2\\
\leq &c_3p_k^2\int_{\Omega}u^{p_k}v^{\alpha-2r}+p_k(a+b)\int_\Omega u^{p_k}-bp_k\int_{\Omega}u^{p_k+1}.
		\end{split}
	\end{equation}
 By H\"{o}lder's inequality and the choice $0<r<\min\{1,\frac{\alpha}{4}\}$, we have
	\begin{align}\label{4.10*}
		\int_{\Omega} u^{p_k} v^{\alpha-2r}
		&= \int_{\Omega} \left(u^{\frac{p_k}{2}}v^{\frac{\alpha}{2}}\right)^{\frac{2(\alpha-2r)}{\alpha}} \cdot u^{\frac{2r p_k}{\alpha}}\nonumber \\
		&\leq \left(\int_{\Omega} \left(u^{\frac{p_k}{2}}v^{\frac{\alpha}{2}}\right)^{\frac{2(\alpha-2r)}{\alpha-4r}}\right)^{\frac{\alpha-4r}{\alpha}} \cdot \left(\int_{\Omega} u^{\frac{p_k}{2}}\right)^{\frac{4r}{\alpha}} \\
		&\leq N_{k-1}^{\frac{4r}{\alpha}}(T) \cdot \left\| u^{\frac{p_k}{2}}v^{\frac{\alpha}{2}} \right\|_{L^{\frac{2(\alpha-2r)}{\alpha-4r}}(\Omega)}^{\frac{2(\alpha-2r)}{\alpha}},\nonumber
	\end{align}
where, by the definition of $N_k(T)$ in \eqref{3.36},
	\begin{equation*}
		\begin{split}
			\int_\Omega u^{\frac{p_k}{2}}=\int_\Omega u^{p_{k-1}}\leq N_{k-1}(T).
		\end{split}
	\end{equation*}
 Here, the condition $\alpha\geq1>4r$ allows us to invoke the Gagliardo-Nirenberg inequality, which guarantees the existence of a constant $c_4>0$ independent of $k$ and $T$ such that
 \begin{equation}
 	\begin{split}
 \left\| u^{\frac{p_k}{2}}v^{\frac{\alpha}{2}} \right\|_{L^{\frac{2(\alpha-2r)}{\alpha-4r}}(\Omega)}^{\frac{2(\alpha-2r)}{\alpha}}
 \leq c_4 \left\| \nabla\left(u^{\frac{p_k}{2}}v^{\frac{\alpha}{2}}\right) \right\|_{L^2(\Omega)} \cdot \left\| u^{\frac{p_k}{2}}v^{\frac{\alpha}{2}} \right\|_{L^1(\Omega)}^{\frac{\alpha-4r}{\alpha}} + c_4 \left\| u^{\frac{p_k}{2}}v^{\frac{\alpha}{2}} \right\|_{L^1(\Omega)}^{\frac{2(\alpha-2r)}{\alpha}}.
 \end{split}
\end{equation}
  Moreover, by the H\"{o}lder inequality,
 \begin{equation}\label{4.12*}
 	\left\| u^{\frac{p_k}{2}}v^{\frac{\alpha}{2}} \right\|_{L^1(\Omega)} \leq \|v_0\|_{L^{\infty}(\Omega)}^{\frac{\alpha}{2}} \cdot N_{k-1}(T).
 \end{equation}
  Hence it follows from \eqref{4.10*}--\eqref{4.12*} that
 \begin{align}
 		c_3 p_k^2 \int_{\Omega} u^{p_k} v^{\alpha-2r}
 		&\leq c_3 c_4 p_k^2 N_{k-1}^{\frac{4r}{\alpha}}(T) \cdot \left\| \nabla\left(u^{\frac{p_k}{2}}v^{\frac{\alpha}{2}}\right) \right\|_{L^2(\Omega)} \cdot \left\| u^{\frac{p_k}{2}}v^{\frac{\alpha}{2}} \right\|_{L^1(\Omega)}^{\frac{\alpha-4r}{\alpha}} \nonumber\\
 		&\quad + c_3 c_4 p_k^2 N_{k-1}^{\frac{4r}{\alpha}}(T) \cdot \left\| u^{\frac{p_k}{2}}v^{\frac{\alpha}{2}} \right\|_{L^1(\Omega)}^{\frac{2(\alpha-2r)}{\alpha}}\nonumber\\
 	&= \left\{ \frac{c_1}{4}\int_{\Omega}\left|\nabla\left(u^{\frac{p_k}{2}}v^{\frac{\alpha}{2}}\right)\right|^2 \right\}^{\frac{1}{2}} \cdot \left(\frac{4}{c_1}\right)^{\frac{1}{2}} c_3 c_4 p_k^2 N_{k-1}^{\frac{4r}{\alpha}}(T) \cdot \left\| u^{\frac{p_k}{2}}v^{\frac{\alpha}{2}} \right\|_{L^1(\Omega)}^{\frac{\alpha-4r}{\alpha}}\nonumber \\
 	&\quad + c_3 c_4 p_k^2 N_{k-1}^{\frac{4r}{\alpha}}(T)  \cdot \left\| u^{\frac{p_k}{2}}v^{\frac{\alpha}{2}} \right\|_{L^1(\Omega)}^{\frac{2(\alpha-2r)}{\alpha}}\\
 	&\leq \frac{c_1}{4}\int_{\Omega}\left|\nabla\left(u^{\frac{p_k}{2}}v^{\frac{\alpha}{2}}\right)\right|^2 + \left(\frac{4}{c_1}(c_3 c_4)^{2}\right) p_k^{4} N_{k-1}^{\frac{8r}{\alpha}}(T)  \cdot \left\| u^{\frac{p_k}{2}}v^{\frac{\alpha}{2}} \right\|_{L^1(\Omega)}^{\frac{2\alpha-8r}{\alpha}}\nonumber\\
 	&\quad + c_3 c_4 p_k^2 N_{k-1}^{\frac{4r}{\alpha}}(T)  \cdot \left\| u^{\frac{p_k}{2}}v^{\frac{\alpha}{2}} \right\|_{L^1(\Omega)}^{\frac{2(\alpha-2r)}{\alpha}}\nonumber\\
 	&\leq  \frac{c_1}{4}\int_{\Omega}\left|\nabla\left(u^{\frac{p_k}{2}}v^{\frac{\alpha}{2}}\right)\right|^2 + c_5 p_k^{4} N_{k-1}^{2}(T)  \cdot \Big\{ \|v_0\|_{L^{\infty}(\Omega)}^{\alpha-4r} + \|v_0\|_{L^{\infty}(\Omega)}^{\alpha-2r} \Big\}\nonumber \\
 	&\leq \frac{c_1}{4}\int_{\Omega}\left|\nabla\left(u^{\frac{p_k}{2}}v^{\frac{\alpha}{2}}\right)\right|^2 + c_6p_k^{4} N_{k-1}^{2}(T),\nonumber
 \end{align}
 where $c_5,c_6>0$ are independent of $k$ and $T$.
	It follows from \eqref{3.37} that
	\begin{align*}
		\frac{d}{dt}\int_\Omega u^{p_k}+b\int_\Omega u^{p_k}&\leq c_{6}p_k^4N_{k-1}^{2}+p_k(a+b)\int_\Omega u^{p_k}-bp_k\int_{\Omega}u^{p_k+1}\\
		&\leq c_{7}p_k^{4} N_{k-1}^{2}+p_k\int_{\{u\geq\frac{a+b}{b}\}} u^{p_k}(a+b-bu)+p_k\int_{\{u\leq\frac{a+b}{b}\}}u^{p_k}(a+b-bu)\\
		&\leq c_{7}p_k^{4} N_{k-1}^{2}+p_k(a+b)\int_{\{u\leq\frac{a+b}{b}\}}u^{p_k}\\
		&\leq c_{7}p_k^{4} N_{k-1}^{2}(T)+p_k|\Omega|(a+b)\bigg(\frac{a+b}{b}\bigg)^{p_k}.
	\end{align*}
Hence, integrating this differential inequality over $(0,t)$ yields
	\begin{align}
		\int_\Omega u^{p_k}(\cdot,t)\leq \int_\Omega u_0^{p_k}+ \frac{c_{7}}{b}p_k^{4} N_{k-1}^{2}(T)+p_k|\Omega|\bigg(\frac{a+b}{b}\bigg)^{p_k+1}.
	\end{align}
	Utilizing the definition of $N_k(T)$ provided in  \eqref{3.36}, we obtain that
	\begin{align}
			N_k(T)&\leq1+\int_\Omega u_0^{p_k}+p_k|\Omega|\bigg(\frac{a+b}{b}\bigg)^{p_k+1}+ c_{8}p_k^{4} N_{k-1}^{2}(T)\nonumber\\
			&\leq p_k+p_k|\Omega|\|u_0\|_{L^\infty(\Omega)}^{p_k}+p_k|\Omega|\bigg(\frac{a+b}{b}\bigg)^{p_k+1}+ c_{8}p_k^{4} N_{k-1}^{2}(T)
	\end{align}
with $c_8:=(\frac{c_7}{b}+1)$ independent of $k,T$. It is observed that
	$$1+|\Omega|\|u_0\|_{L^\infty(\Omega)}^{p_k}+|\Omega|\bigg(\frac{a+b}{b}\bigg)^{p_k+1} \leq \tau^{p_k}$$
	with
$$\tau:=1+(|\Omega|+1)\|u_0\|_{L^\infty(\Omega)}+(|\Omega|+1)\bigg(\frac{a+b}{b}\bigg)^2,$$
and
	\begin{align*}
		c_{8}p_k^{4}&=c_{8}2^{4k}
		=e^{\ln c_{8}+4k\ln 2}
		\leq e^{(\ln c_{8}+4\ln2)k}
		=\big\{e^{(\ln c_{8}+4\ln2)}\big\}^k
	\end{align*}
with $\rho:=\max\{2,e^{(\ln c_{8}+4\ln2)}\}$,
this entails that
\begin{equation*}
		N_k(T)\leq\rho^k\tau^{p_{k}}+\rho^k N_{k-1}^{2}(T).
\end{equation*}

Applying Lemma \ref{lemma3.9} with $q=0$ yields
$$
\|u(\cdot,t)\|_{L^\infty(\Omega)}
\le \liminf_{k\to\infty} N_k^{1/2^k}(T)
\le 2\sqrt{2}\rho^3\tau N_0(T)
\quad\text{for all }0<t\le T.
$$
Since $T>0$ is arbitrary, letting $T\to\infty$ yields the desired uniform bound of $u$ for all $t>0$ and hence the proof is  complete.
	\end{proof}

At this point, we combine the preceding results to prove Theorem \ref{theorem1.1}.
\begin{proof}[Proof of Theorem \ref{theorem1.1}]
	According to Lemma \ref{le3.8Moser}, there exists constant $c_1>0$ such that
	\begin{equation*}
		\| u(\cdot,t)\|_{L^{\infty}(\Omega)}\leq c_1~~\mbox{for all}~~t>0.
	\end{equation*}
	Subsequently, by smoothing estimates for the Neumann heat semigroup (see Lemma \ref{le2.5}), we obtain
	\begin{equation}\label{3.31}\|\nabla v(\cdot,t)\|_{L^\infty(\Omega)}\leq c_2~~\mbox{for all}~~t>0.\end{equation} This completes the proof of Theorem \ref{theorem1.1}.
\end{proof}

\section{Large time behavior of solutions}
This section is devoted to the proof of the convergence properties stated in
Theorem \ref{theorem1.2}. Our aim is to show that the quadratic logistic
damping drives the solution pair \((u,v)\) toward the homogeneous state
\((a/b,0)\) at an exponential rate, despite the degeneracy of \(\phi\) at
zero. The starting point is to verify the convergence property of
$\int^\infty_0\int_\Omega v^{\alpha-2}|\nabla v|^2$ in the case of $\alpha>1$.
\begin{lemma}\label{lem4.1}
 Under the assumption of Theorem \ref{theorem1.2},  we  have
	\begin{equation}\label{4.1}
		\int^\infty_0\int_\Omega v^{\alpha-2}|\nabla v|^2\leq  \frac1{\alpha(\alpha-1)}\int_\Omega v_0^{\alpha}.
	\end{equation}
\end{lemma}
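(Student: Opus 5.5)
The plan is to test the second equation of \eqref{1.1} with $v^{\alpha-1}$. Since $\alpha>1$, the function $v^\alpha$ is an admissible Lyapunov-type quantity. Because $v>0$ in $\overline\Omega\times(0,\infty)$ by Proposition \ref{pro1.1}, the function $v^{\alpha-1}$ is smooth there. Integrating by parts under the Neumann condition will give, for all $t>0$,
\begin{equation*}
\frac{1}{\alpha}\frac{d}{dt}\int_\Omega v^\alpha=\int_\Omega v^{\alpha-1}\Delta v-\int_\Omega uv^{\alpha}
=-(\alpha-1)\int_\Omega v^{\alpha-2}|\nabla v|^2-\int_\Omega uv^\alpha .
\end{equation*}
Since $u\ge0$, we can drop the last term, which leaves
\begin{equation*}
\frac{d}{dt}\int_\Omega v^\alpha+\alpha(\alpha-1)\int_\Omega v^{\alpha-2}|\nabla v|^2\le0\qquad\mbox{for all }t>0 .
\end{equation*}

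Next we integrate over $(s,t)$ with $0<s<t$, discard the nonnegative term $\int_\Omega v^\alpha(\cdot,t)$, and let $s\searrow0$. This will use the continuity $v\in C^0(\overline\Omega\times[0,\infty))$ from Proposition \ref{pro1.1}, which gives $\int_\Omega v^\alpha(\cdot,s)\to\int_\Omega v_0^\alpha$. The result is
\[
\int_0^t\int_\Omega v^{\alpha-2}|\nabla v|^2\le\frac{1}{\alpha(\alpha-1)}\int_\Omega v_0^\alpha .
\]
Letting $t\to\infty$ and applying monotone convergence then yields \eqref{4.1}.

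The only delicate point is near $t=0$, where $v$ is only $C^0$ up to the initial time, so the identity should be derived on $(s,t)$ and the limit $s\searrow0$ taken afterwards, as above. Positivity of $v$ means the weight $v^{\alpha-2}$ is harmless for all $t>0$, including when $\alpha<2$. The standing assumption $\alpha>1$ is exactly what makes the coefficient $\alpha(\alpha-1)$ positive, so I expect no real obstacle here.
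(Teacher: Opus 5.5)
Your proposal is correct and follows essentially the same route as the paper's proof: test the $v$-equation with $v^{\alpha-1}$, drop the nonnegative term $\alpha\int_\Omega uv^\alpha$, and integrate in time. Your extra step of integrating over $(s,t)$ and letting $s\searrow0$ (using continuity of $v$ up to $t=0$ and monotone convergence) is a harmless refinement that the paper leaves implicit.
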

\begin{proof}
	Multiplying the $v$-equation of \eqref{1.1} by $v^{\alpha-1}$ and integrating over $\Omega$, we have
	\begin{equation*}
		\frac{d}{dt}\int_\Omega v^ \alpha + \alpha (\alpha-1) \int_\Omega v^{\alpha-2}|\nabla v|^2=-\alpha \int_\Omega u v^{\alpha}
	\end{equation*}
	and then
	\begin{equation*}
		\alpha (\alpha-1)\int_0^\infty\int_\Omega v^{\alpha-2}|\nabla v|^2+\alpha \int_0^\infty\int_\Omega u v^{\alpha}\leq\int_\Omega v_0^{\alpha},
	\end{equation*}
	which shows  \eqref{4.1} readily.
\end{proof}

The estimate \eqref{4.1} enables us to derive both a uniform positive lower bound for $\int_\Omega u(\cdot,t)$ and the integrability of $\int_1^\infty\int_\Omega (u-\frac{a}{b})^2$.

\begin{lemma}\label{lem4.2} Assume that the hypotheses of Theorem \ref{theorem1.2} hold. Then
\begin{equation}\label{4.2}
		\int^\infty_1\int_\Omega(u-\frac{a}{b})^2< \infty.
	\end{equation}
		Moreover, there exists a constant $\kappa> 0 $ such that
	\begin{equation}\label{4.3}
		\int_\Omega u(\cdot,t)\geq \kappa\quad \hbox{ for all}~~t>1.
	\end{equation}
\end{lemma}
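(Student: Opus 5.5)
We will use a relative-entropy functional for $u$ around the constant $a/b$, namely
\[
\mathcal E(t):=\int_\Omega\Big(u-\frac ab-\frac ab\ln\frac{bu}{a}\Big)\ge0,
\]
which is well defined since $u>0$ for $t>0$ by Proposition \ref{pro1.1}. Testing the first equation in \eqref{1.1} with $1-\frac{a}{bu}$ and integrating by parts gives
\[
\frac{d}{dt}\mathcal E(t)=-\frac ab\int_\Omega\frac{\nabla(\phi(v)u)\cdot\nabla u}{u^2}+\int_\Omega(au-bu^2)\Big(1-\frac{a}{bu}\Big).
\]
The reaction term equals $-b\int_\Omega(u-\frac ab)^2$. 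For the diffusion term we expand $\nabla(\phi(v)u)=\phi(v)\nabla u+\phi'(v)u\nabla v$, which produces
\[
-\frac ab\int_\Omega\phi(v)\frac{|\nabla u|^2}{u^2}-\frac ab\int_\Omega\phi'(v)\frac{\nabla u\cdot\nabla v}{u}.
\]
Young's inequality, combined with \eqref{3.17*} (that is, $\phi\ge c_1v^\alpha$ and $|\phi'|\le c_2v^{\alpha-1}$), bounds the cross term by half the good term plus $C\int_\Omega v^{\alpha-2}|\nabla v|^2$. We therefore obtain
\[
\mathcal E'(t)+b\int_\Omega\Big(u-\frac ab\Big)^2\le C\int_\Omega v^{\alpha-2}|\nabla v|^2 .
\]

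Integrating this inequality over $(1,T)$ and invoking Lemma \ref{lem4.1}, which applies because $\alpha>1$, gives
\[
b\int_1^T\int_\Omega\Big(u-\frac ab\Big)^2\le\mathcal E(1)+\frac{C}{\alpha(\alpha-1)}\int_\Omega v_0^\alpha .
\]
Letting $T\to\infty$ yields \eqref{4.2}, provided $\mathcal E(1)<\infty$. This finiteness holds because $u(\cdot,1)$ is continuous and strictly positive on the compact set $\overline\Omega$, and hence $\ln u(\cdot,1)$ is bounded. We start the integration at $t=1$ rather than $t=0$ precisely because $u_0$ may vanish somewhere, in which case $\mathcal E(0)$ could be infinite.

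For \eqref{4.3} we use the same inequality once more. Dropping the nonnegative term on its left and integrating from $1$ to $t$ gives
\[
\mathcal E(t)\le \mathcal E(1)+C\int_0^\infty\!\!\int_\Omega v^{\alpha-2}|\nabla v|^2=:M\quad\mbox{for all }t>1.
\]
Since $u-\frac ab-\frac ab\ln\frac{bu}a\ge0$ and $u\ge0$, we get
\[
-\frac ab\int_\Omega\ln\frac{bu}{a}\le \mathcal E(t)\le M .
\]
Suppose, aiming for a contradiction, that $\int_\Omega u(\cdot,t_k)\to0$ along some sequence $t_k>1$. Jensen's inequality gives
\[
\frac1{|\Omega|}\int_\Omega\ln u\le\ln\Big(\frac1{|\Omega|}\int_\Omega u\Big)\to-\infty ,
\]
so $-\frac ab\int_\Omega\ln\frac{bu}{a}(\cdot,t_k)\to+\infty$, which contradicts $\mathcal E(t_k)\le M$. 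Hence $\int_\Omega u(\cdot,t)$ has a uniform positive lower bound $\kappa$ on $t>1$. One can also make $\kappa$ explicit: Jensen yields $\ln\big(\frac{b}{a|\Omega|}\int_\Omega u\big)\ge-\frac{bM}{a|\Omega|}$.

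The main obstacle is absorbing the cross-diffusive term, and it is handled exactly by weighting with $\phi(v)\ge c_1v^\alpha$. After this absorption the residual is $\frac{a^2}{b^2}\frac{\phi'(v)^2}{\phi(v)}|\nabla v|^2\le C v^{\alpha-2}|\nabla v|^2$, and it is finite in space-time only because $\alpha>1$ via Lemma \ref{lem4.1}.
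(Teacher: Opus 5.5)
Your proof is essentially the paper's argument: the same relative entropy $\int_\Omega(u-u_*-u_*\ln\frac{u}{u_*})$ with $u_*=\frac ab$, the same Young absorption of the cross term via $\phi\ge c_1v^\alpha$ and $|\phi'|\le c_2v^{\alpha-1}$, integration from $t=1$ combined with Lemma \ref{lem4.1}, and Jensen's inequality for the lower mass bound.

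There is one small slip. The inequality $-\frac ab\int_\Omega\ln\frac{bu}{a}\le\mathcal E(t)$ is false as written (take $u\equiv\frac{a}{2b}$). Since $-\frac ab\ln\frac{bu}{a}=\bigl(u-\frac ab-\frac ab\ln\frac{bu}{a}\bigr)-u+\frac ab$, the correct bound is $\mathcal E(t)+\frac ab|\Omega|$. This matches the paper's estimate $\int_\Omega\ln\frac{u}{u_*}\ge-|\Omega|-\frac{c_3}{u_*}$. Your contradiction argument is unaffected, and the explicit estimate becomes $\ln\bigl(\frac{b}{a|\Omega|}\int_\Omega u\bigr)\ge-1-\frac{bM}{a|\Omega|}$.
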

\begin{proof}
	By \eqref{2.2} and \eqref{1.6}, there exist positive constants $c_1$ and $c_2$ such that
	\begin{equation}\label{4.4}
		c_1 v^{\alpha}(x,t)\leq\phi(v(x,t))~~\mbox{and}~|\phi'(v(x,t))|\leq c_2 v^{\alpha-1}(x,t)~~\mbox{for}~(x,t)\in\Omega\times(0,\infty).
	\end{equation}
	
Set $u_*:=\frac ab$. Due to $u>0$,  testing the $u$-equation in \eqref{1.1} by
	$(u-u_*)/u$  and integrating by parts, we obtain
  that
	\begin{align}\label{4.5}
			\frac{d}{dt}\int_\Omega(u-u_*-u_*\ln\frac{u}{u_*})
			&=\int_\Omega\Delta(\phi(v)u)\frac{u-u_*}{u}+a\int_\Omega(u-u_*)-b\int_\Omega u(u-u_*)\nonumber\\
			&=-\int_\Omega\frac{|\nabla u|^2u_*}{u^2}\phi(v)-\int_\Omega\frac{u_*}{u}\phi'(v)\nabla v\cdot\nabla u-b\int_\Omega(u-u_*)^2\\
			&\leq-\frac{1}{2}\int_\Omega\frac{|\nabla u|^2u_*}{u^2}\phi(v)+\frac{1}{2}\int_\Omega\frac{|\phi'(v)|^2u_*}{\phi(v)}|\nabla v|^2-b\int_\Omega(u-u_*)^2\nonumber\\
			&\leq\frac{c_2^2u_*}{2c_1}\int_\Omega v^{\alpha-2}|\nabla v|^2 -b\int_\Omega(u-u_*)^2.\nonumber
	\end{align}
	Since  $u-u_*-u_*\ln\frac{u}{u_*}\geq 0$,  integrating \eqref{4.5} from $1$ to $t$ gives
\begin{equation}\label{46f}
		b\int^t_1\int_\Omega(u-u_*)^2\leq  \int_\Omega (u(\cdot,1)-u_*-u_*\ln\frac{u(\cdot,1)}{u_*})+
		\frac{c_2^2u_*}{2c_1}   \int^t_1\int_\Omega v^{\alpha-2}|\nabla v|^2.
	\end{equation}
	Letting $t\to\infty$ in  \eqref{46f} and using \eqref{4.1}, we deduce \eqref{4.2}.

To prove \eqref{4.3}, we first observe from \eqref{4.5} and \eqref{4.1} that
	\[
		\int_\Omega\left(u(\cdot,t)-u_*-u_*\ln\frac{u(\cdot,t)}{u_*}\right)
		\leq c_3
		\qquad\mbox{for all }t>1,
	\]
	where
	\[
		c_3:=\int_\Omega\left(u(\cdot,1)-u_*-u_*\ln\frac{u(\cdot,1)}{u_*}\right)
		+\frac{c_2^2u_*}{2c_1}\int_1^\infty\int_\Omega v^{\alpha-2}|\nabla v|^2.
	\]
	Hence, for every $t>1$,
	\begin{align*}
		\int_\Omega\ln\frac{u(\cdot,t)}{u_*}
		\geq -|\Omega|-\frac{c_3}{u_*}=:c_4
	\end{align*}
	So Jensen's inequality yields	$
	\int_\Omega u(\cdot,t)=\frac{a}b \int_\Omega e^{\ln \frac{u(\cdot,t)}{u_*}}\geq \frac {a|\Omega|}b e^{\frac{c_4}{|\Omega|}}
	=:\kappa$ for all $t>1$.
\end{proof}

We next convert  the spatio-temporal estimate \eqref{4.2} into  the convergence of $u$ as $t\to\infty$.
\begin{lemma}\label{lemma4.3}
	Under the assumptions of Theorem \ref{theorem1.2}, we have
\begin{equation}\label{4.7}
u(\cdot,t)\to\frac{a}{b}
		\quad\mbox{in }L^2(\Omega)\quad\mbox{as }t\to\infty.
	\end{equation}
\end{lemma}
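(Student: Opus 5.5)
The plan is to upgrade the space–time integrability \eqref{4.2} to genuine convergence by showing that $y(t):=\int_\Omega (u(\cdot,t)-\frac ab)^2$ is \emph{uniformly continuous} on $[1,\infty)$, more precisely that its time derivative is bounded above by an integrable quantity plus a bounded one. Then an integrable, nonnegative, uniformly continuous function must tend to zero. Equivalently, I use a Barbalat-type argument.

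Concretely, I test the $u$-equation by $u-u_*$ with $u_*=a/b$, as in the proof of Lemma \ref{L2bound}. This gives
\[
\frac{1}{2}\frac{d}{dt}\int_\Omega (u-u_*)^2=-\int_\Omega \phi(v)|\nabla u|^2-\int_\Omega \phi'(v)u\nabla v\cdot\nabla u-b\int_\Omega u(u-u_*)^2 ,
\]
using $au-bu^2=-bu(u-u_*)$. With \eqref{4.4} and Young's inequality the cross term is absorbed, so that
\[
\frac{d}{dt}y(t)\le \frac{c_2^2}{c_1}\|u\|_{L^\infty}^2\int_\Omega v^{\alpha-2}|\nabla v|^2=:g(t)\qquad\text{for all }t>1.
\]
By Theorem \ref{theorem1.1} and Lemma \ref{lem4.1}, $g\in L^1(1,\infty)$. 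This one-sided bound is already enough. Suppose $y(t_k)\ge 2\varepsilon$ along $t_k\to\infty$. For $t\in[t_k-1,t_k]$ we get $y(t)\ge y(t_k)-\int_{t}^{t_k}g\ge 2\varepsilon-\int_{t_k-1}^\infty g\ge \varepsilon$ for $k$ large. Hence $\int_{t_k-1}^{t_k}y\ge\varepsilon$ for infinitely many disjoint intervals, which contradicts \eqref{4.2}. Therefore $y(t)\to0$, which is \eqref{4.7}.

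The main point to check is that the differential inequality really has no bad right-hand side beyond $g$. The reaction term is nonpositive since $u>0$, and the diffusion-plus-cross term yields exactly $\frac{1}{2}\int_\Omega\phi'(v)^2u^2|\nabla v|^2/\phi(v)\le C\|u\|_\infty^2\int_\Omega v^{\alpha-2}|\nabla v|^2$. This is where $\alpha>1$ enters, through Lemma \ref{lem4.1}, together with the uniform $L^\infty$ bound from Theorem \ref{theorem1.1}. The rest of the argument is routine.
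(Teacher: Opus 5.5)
Your proposal is correct and follows essentially the paper's proof. It uses the same test function $u-\frac ab$ and the same Young absorption, giving $y'(t)\le C\int_\Omega v^{\alpha-2}|\nabla v|^2$, whose right-hand side is integrable by Theorem~\ref{theorem1.1} and Lemma~\ref{lem4.1}, and then combines this with \eqref{4.2}. The only difference is cosmetic: in the final step the paper picks $t_j\to\infty$ with $y(t_j)\to0$ and integrates forward, while you argue by contradiction and integrate backward over disjoint unit intervals; both are equally valid.
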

\begin{proof}
	Set
	\begin{equation}\label{4.8}
		y(t):=\frac12\int_\Omega\left(u(\cdot,t)-\frac{a}{b}\right)^2,
		\qquad
		h(t):=\int_\Omega v^{\alpha-2}|\nabla v|^2.
	\end{equation}
	Testing the first equation in \eqref{1.1} with $u-a/b$ and integrating by
	parts, we obtain
	\begin{align}\label{4.9}
		y'(t)
		&=-\int_\Omega\phi(v)|\nabla u|^2
		-\int_\Omega u\phi'(v)\nabla u\cdot\nabla v
		-b\int_\Omega u\left(u-\frac{a}{b}\right)^2.
	\end{align}
	By Young's inequality, Theorem
	\ref{theorem1.1} and \eqref{4.4}, there exists a constant $C>0$ such that
	\begin{align*}
		-\int_\Omega u\phi'(v)\nabla u\cdot\nabla v
		&\leq \frac12\int_\Omega\phi(v)|\nabla u|^2
		+\frac12\|u(\cdot,t)\|_{L^\infty(\Omega)}^2
		\int_\Omega\frac{|\phi'(v)|^2}{\phi(v)}|\nabla v|^2\\
		&\leq \frac12\int_\Omega\phi(v)|\nabla u|^2+C h(t),
	\end{align*}
	which along with \eqref{4.9} then entails
\begin{align}\label{4.10}
y'(t)\leq C h(t).
\end{align}	
	From \eqref{4.2},  it follows that $y\in L^1((1,\infty))$  and thereby there is a sequence $t_j\to\infty$ such that
	$y(t_j)\to0$.  Integrating \eqref{4.10} from $t_j$ to $t> t_j$ gives
	$
	y(t)\leq y(t_j)+C\int_{t_j}^{\infty}h(s)\,ds.
	$
Thanks to \eqref{4.1},
	 $\int_{t_j}^{\infty}h(s)\,ds\to 0$ as $t_j\to\infty$ and hence we arrive at
$y(t)\to0 $ as $t\to\infty$, that is,
	\begin{equation}\label{4.5a}
		\|u(\cdot,t)-a/b\|_{L^2(\Omega)}\to0\quad\mbox{as }t\to\infty.
	\end{equation}
\end{proof}
The asymptotic behavior of $u$ stated in Lemma \ref{lemma4.3} can lead to the exponential decay  of $v$ in $W^{1,\infty}(\Omega)$. Indeed, we have
\begin{lemma}\label{lemma4.4}
Under the assumptions of Theorem \ref{theorem1.2}, there exist
$C>0$ and $\lambda>0$ such that
\begin{equation}\label{4.12}
\|v(\cdot,t)\|_{W^{1,\infty}(\Omega)}\le C e^{-\lambda t}
\quad\text{for all }t>0.
\end{equation}
\end{lemma}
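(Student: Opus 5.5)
The plan is to first get exponential decay of $v$ in a weak norm using the $L^2$-convergence of $u$ from Lemma \ref{lemma4.3}, and then upgrade this to $W^{1,\infty}(\Omega)$ by interpolation with the uniform bounds of Theorem \ref{theorem1.1}.

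\textbf{Step 1: a lower bound on $u$ in measure.} By Lemma \ref{lemma4.3}, $u(\cdot,t)\to a/b$ in $L^2(\Omega)$. Hence there is $t_0>1$ with
\[
\Big|\big\{x:\ u(x,t)<\tfrac{a}{2b}\big\}\Big|\le \tfrac{|\Omega|}{4}\qquad\text{for all } t\ge t_0 .
\]

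\textbf{Step 2: exponential decay of $\int_\Omega v^2$.} Testing the $v$-equation by $v$ gives
\[
\tfrac12\tfrac{d}{dt}\int_\Omega v^2+\int_\Omega|\nabla v|^2+\int_\Omega uv^2=0 .
\]
I then need a coercivity inequality of the form
\[
\int_\Omega|\nabla w|^2+\int_\Omega u w^2\ge \delta\int_\Omega w^2 ,
\]
uniformly over all $u$ satisfying the measure condition of Step 1. I would prove this by a contradiction/compactness argument. Normalize $\|w_j\|_{L^2}=1$ and suppose the left-hand side tends to zero. Then $w_j$ converges strongly in $L^2$ to a constant $c$ with $|c|^2|\Omega|=1$. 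Since $u\ge a/(2b)$ on a set of measure at least $\frac34|\Omega|$, we get
\[
\int_\Omega u w_j^2\ge \tfrac{a}{2b}\int_{\{u\ge a/(2b)\}} w_j^2 ,
\]
and the right-hand side stays bounded away from zero. This requires absolute continuity estimates that are uniform over sets of large measure; they follow from strong $L^2$ convergence. The result is $\int_\Omega v^2\le Ce^{-2\delta t}$.

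A simpler alternative is to use $\int_\Omega uv^2\ge \frac{a}{2b}\int_\Omega v^2-\frac{a}{2b}\int_{\{u<a/(2b)\}}v^2$ together with Poincaré's inequality, $\int_\Omega|\nabla v|^2\ge c_P\int_\Omega(v-\bar v)^2$. Bounding the bad set via $\bar v^2|\{u<a/(2b)\}|$ plus the oscillation term gives the same conclusion with explicit constants. I expect this coercivity step to be the main obstacle. The difficulty is that $u\to a/b$ only in $L^2$, so a pointwise lower bound on $u$ is unavailable. If the measure-based argument gets awkward, I would use the Poincaré splitting above.

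\textbf{Step 3: upgrade to $W^{1,\infty}(\Omega)$.} By Theorem \ref{theorem1.1}, $u$ is bounded in $L^\infty$ and $v$ is bounded in $W^{1,\infty}$. Parabolic regularity then gives a uniform bound for $v$ in $C^{1+\gamma}(\overline\Omega)$ for $t\ge1$. Here I would apply the semigroup estimates behind Lemma \ref{le2.5} to the variation-of-constants formula on time windows $[t-1,t]$, with $uv$ bounded in $L^\infty$, to bound $v$ in $W^{2,p}$ for some $p>2$. A Gagliardo--Nirenberg interpolation in two dimensions then yields
\[
\|v\|_{W^{1,\infty}}\le C\|v\|_{W^{2,p}}^{\theta}\|v\|_{L^2}^{1-\theta}
\]
for some $\theta\in(0,1)$, so $\|v(\cdot,t)\|_{W^{1,\infty}}\le Ce^{-(1-\theta)\delta t}$ for $t\ge t_0$. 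For $t<t_0$ the bound follows from Theorem \ref{theorem1.1} by enlarging $C$, which gives \eqref{4.12} with $\lambda=(1-\theta)\delta$.
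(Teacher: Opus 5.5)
Your outline is sound but takes a different route from the paper in both steps.

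\textbf{The $L^2$ decay.} The paper needs no coercivity lemma. It writes $\int_\Omega uv^2=\frac ab\int_\Omega v^2+\int_\Omega(u-\frac ab)v^2$ and bounds the last term by $\|u-\frac ab\|_{L^2}\|v\|_{L^4}^2$. It then uses the two-dimensional inequality $\|v\|_{L^4}^2\le C(\|\nabla v\|_{L^2}\|v\|_{L^2}+\|v\|_{L^2}^2)$ to absorb this term once $\|u-\frac ab\|_{L^2}$ is small. This gives the explicit bound $\frac{d}{dt}\|v\|_{L^2}^2\le-\frac ab\|v\|_{L^2}^2$. Your measure/compactness coercivity is valid and more robust, since it uses only convergence of $u$ in measure and works in any dimension; the price is a non-explicit $\delta$. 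If you take the Poincar\'e route instead, first weight $\int_\Omega uv^2$ by a small $\theta$ with $\theta\frac{a}{2b}\le\frac{c_P}{2}$, using $u\ge0$. Without this, the oscillation term need not be absorbable when $a/b$ is large compared with $c_P$.

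\textbf{The upgrade to $W^{1,\infty}$.} The paper uses no regularity beyond Theorem~\ref{theorem1.1}. Boundedness of $\nabla v$, together with $\|v\|_{L^\infty}\le C\|\nabla v\|_{L^4}^{2/3}\|v\|_{L^2}^{1/3}+C\|v\|_{L^2}$, gives decay of $\|v\|_{L^\infty}$. The pointwise estimate $|\nabla v|\le Cv^{1-r}$ of Lemma~\ref{lem3.5} then turns this into $\|\nabla v\|_{L^\infty}\le C\|v\|_{L^\infty}^{1-r}$.

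Your route genuinely needs a uniform bound strictly above $W^{1,\infty}$, and one claim there must be corrected. With $uv$ merely in $L^\infty$, the semigroup estimates do not give $\sup_t\|v(\cdot,t)\|_{W^{2,p}}<\infty$, because $\|e^{\sigma\Delta}f\|_{W^{2,p}}\le C\sigma^{-1}\|f\|_{L^p}$ is not integrable at $\sigma=0$. Maximal regularity would only give $W^{2,p}$ in an $L^p$-in-time sense. Use fractional powers instead: $\|(-\Delta+1)^\beta e^{\sigma\Delta}f\|_{L^p}\le C\sigma^{-\beta}\|f\|_{L^p}$ with $\beta<1$ and $2\beta-\frac2p>1+\gamma$. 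On windows $[t-1,t]$ this yields the uniform $C^{1+\gamma}(\overline\Omega)$ bound you state. Interpolating $C^{1+\gamma}$ against $L^2$ then completes your argument. Lemma~\ref{lem3.5} is exactly what lets the paper skip this extra regularity step.
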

\begin{proof}
Testing the second equation in \eqref{1.1} by $v$ yields
\begin{equation}\label{4.13}
\frac12\frac d{dt}\int_\Omega v^2+\int_\Omega|\nabla v|^2+\frac ab\int_\Omega v^2
=-\int_\Omega(u-\frac ab)v^2.
\end{equation}
By H\"{o}lder's inequality and the two-dimensional Gagliardo--Nirenberg
inequality, we have
\begin{align}\label{4.14}
\left|\int_\Omega(u-\frac ab)v^2\right|
\le & C_{GN}\,\varepsilon(t)\,(\|\nabla v\|_{L^2}\|v\|_{L^2}+\|v\|_{L^2}^2)\nonumber\\
\le & \frac{1}{2} \|\nabla v\|_{L^2}^2
+ \Big(\frac{C_{GN}^2 \varepsilon^2(t)}{2}+C_{GN} \varepsilon(t)\Big)\|v\|_{L^2}^2,
\end{align}
with
$\varepsilon(t):=\|u(\cdot,t)-a/b\|_{L^2(\Omega)}$.

By Lemma \ref{lemma4.3}, $\varepsilon(t)\to0$ as $t\to\infty$; hence
there exists $t_0>0$ such that $C_{GN}^2 \varepsilon^2(t)+C_{GN} \varepsilon(t)<\frac a{2b}$ for all
$t>t_0$.
Therefore substituting \eqref{4.14} into \eqref{4.13}, we obtain
  that  for  all $t>t_0$,
$
\frac d{dt}\|v(\cdot,t)\|_{L^2}^2\le-\frac ab\,\|v(\cdot,t)\|_{L^2}^2,
$
which yields the exponential decay of $\|v(\cdot,t)\|_{L^2}$. Furthermore, by Theorem \ref{theorem1.1}, \(\|\nabla v\|_{L^\infty(\Omega)}\le C\) and hence \(\|\nabla v\|_{L^4(\Omega)}\le C|\Omega|^{1/4}\). According to the two-dimensional
Gagliardo--Nirenberg inequality
\[
\|v\|_{L^\infty(\Omega)}
\le C_{GN}\|\nabla v\|_{L^4(\Omega)}^{2/3}
\|v\|_{L^2(\Omega)}^{1/3}
+C_{GN}\|v\|_{L^2(\Omega)},
\]
the exponential decay of \(\|v\|_{L^2(\Omega)}\) implies that of
\(\|v\|_{L^\infty(\Omega)}\). Hence combining this with \(\|\nabla v\|_{L^\infty}\le c_1\|v\|_{L^\infty}^{1-r}\) from Lemma \ref{lem3.5}, we arrive at \eqref{4.12} readily.
\end{proof}

Having established the exponential decay of \(v\) in \(W^{1,\infty}(\Omega)\)
in Lemma \ref{lemma4.4}, we next apply the reciprocal-power functional
to derive the exponential convergence of \(u\) to \(a/b\) in \(L^2(\Omega)\).

\begin{lemma}\label{lem4.5}
	Let the assumptions of Theorem \ref{theorem1.2} hold. Then  there  exist constants $\mu>0$ and $C>0$ such that
	\begin{equation}\label{4.15}
		\|u(\cdot,t)-a/b\|_{L^2(\Omega)}\leq Ce^{-\mu t}
		\qquad\mbox{for all }t>0.
	\end{equation}
\end{lemma}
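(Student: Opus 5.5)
We plan to follow the route announced in the introduction: show that $E(t):=\int_\Omega\Psi(u/u_*)$ with $u_*:=a/b$ and $\Psi(s)=s^{-4}+4s-5$ decays exponentially, and then convert this into an $L^2$ bound. Note that $\Psi\ge0$, $\Psi(1)=\Psi'(1)=0$ and $\Psi''(s)=20s^{-6}>0$.

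\textbf{Step 1: the evolution of $E$.} Testing the $u$-equation with $\Psi'(u/u_*)/u_*=(4/u_*)(1-(u/u_*)^{-5})$ and integrating by parts gives
\begin{equation*}
\frac{d}{dt}E=-\frac{20}{u_*^2}\int_\Omega \Big(\frac{u}{u_*}\Big)^{-6}\big(\phi(v)|\nabla u|^2+u\phi'(v)\nabla u\cdot\nabla v\big)+\frac{4}{u_*}\int_\Omega\Big(1-\Big(\frac{u}{u_*}\Big)^{-5}\Big)bu(u_*-u).
\end{equation*}
For the reaction term, set $s=u/u_*$. It equals $-4a\int_\Omega s(s-1)(1-s^{-5})$. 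An elementary inequality should give $s(s-1)(1-s^{-5})\ge c\,\Psi(s)$ for all $s>0$, with $c$ as close to $1/2$ as one likes. This is the source of the rate $a$ in Remark \ref{rem:rate}. The check is that near $s=1$ both sides behave like $5(s-1)^2$ versus $10(s-1)^2$; as $s\to0$ they behave like $s^{-4}$ versus $s^{-4}$; and as $s\to\infty$ like $s^2$ versus $4s$.

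The cross term is handled by Young's inequality against half of the diffusion. It is bounded by
\begin{equation*}
C\int_\Omega s^{-6}u^2\frac{|\phi'(v)|^2}{\phi(v)}|\nabla v|^2\le C\int_\Omega s^{-4}v^{\alpha-2}|\nabla v|^2.
\end{equation*}
By Lemma \ref{lem3.5} and Lemma \ref{lemma4.4} we have $v^{\alpha-2}|\nabla v|^2\le Cv^{\alpha-2r}\le Ce^{-(\alpha-2r)\lambda t}$, where $\alpha-2r>0$ for $r<\min\{1,\alpha/2\}$. Hence
\begin{equation*}
E'\le -2\mu_0E+Ce^{-(\alpha-2r)\lambda t}\int_\Omega s^{-4}\le -2\mu_0E+Ce^{-(\alpha-2r)\lambda t}(E+C),
\end{equation*}
using $s^{-4}\le\Psi(s)+5$.

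\textbf{Step 2: main obstacle.} The main difficulty is the weight $s^{-4}$. It is singular as $u\to0$, and no lower bound for $u$ is available. The reciprocal power was chosen precisely so that the perturbation $\int_\Omega s^{-4}$ is controlled by $E$ itself. This gives a linear ODI $E'\le(-2\mu_0+Ce^{-\delta t})E+Ce^{-\delta t}$ with $\delta=(\alpha-2r)\lambda$. Because $\int_0^\infty e^{-\delta t}\,dt<\infty$, Gronwall's lemma yields $E(t)\le Ce^{-2\mu t}$ for any $\mu<\min\{2a c,\delta\}/2$, matching the rate in Remark \ref{rem:rate}. Finiteness of $E(0)$ fails if $u_0$ vanishes somewhere, so we start at $t=1$ instead. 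There $u(\cdot,1)>0$ on $\overline\Omega$ by Proposition \ref{pro1.1}, so $E(1)<\infty$. On $(0,1)$ the claimed bound is trivial by Theorem \ref{theorem1.1}.

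\textbf{Step 3: conclusion.} Since $u\le C$ by Theorem \ref{theorem1.1}, $s$ ranges in $(0,S]$. On this range $\Psi(s)\ge c_S(s-1)^2$ by strict convexity, with $\Psi(s)\to\infty$ as $s\to0$. Hence $\|u-u_*\|_{L^2}^2\le CE(t)\le Ce^{-2\mu t}$, which proves \eqref{4.15}.
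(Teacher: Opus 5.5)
Your proposal is correct and follows the paper's proof essentially step for step: the same functional $E(t)=\int_\Omega\Psi(u/u_*)$, the cross term bounded by $C\int_\Omega s^{-4}\frac{|\phi'(v)|^2}{\phi(v)}|\nabla v|^2\le Ce^{-(\alpha-2r)\lambda t}(E+5|\Omega|)$ via Lemmas \ref{lem3.5} and \ref{lemma4.4}, Gronwall from $t=1$, and the Taylor bound $\Psi(s)\ge 10M^{-6}(s-1)^2$ on $(0,M]$ coming from the $L^\infty$ bound. The one point to tighten is the reaction inequality: it holds exactly with $c=\frac12$ for all $s>0$ by the identity $4(s^{-4}-s)(1-s)-2\Psi(s)=2(s-1)^4(2s^2+2s+1)s^{-4}\ge0$ (your asymptotic checks plus continuity only give some $c>0$, which already suffices for the lemma), and this yields $-2aE$ and the rate $\mu<\frac12\min\{2a,(\alpha-2r)\lambda\}$ of Remark \ref{rem:rate}; your stated range $\mu<\min\{2ac,\delta\}/2$ should read $\min\{4ac,\delta\}/2$, a harmless slip.
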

\begin{proof}
	Set $u_*:=a/b$, $s(x,t):=u(x,t)/u_*$, and
	\[
		\Psi(s):=s^{-4}+4s-5,
		\qquad E(t):=\int_\Omega\Psi(s(\cdot,t)).
	\]
	By the positivity of $u(\cdot,t)$ for all $t>0$, the functional $E(t)$ is well defined on $(0,\infty)$.   A direct calculation gives
	\begin{equation}\label{4.16}
		4(s^{-4}-s)(1-s)-2\Psi(s)
		=\frac{2(s-1)^4(2s^2+2s+1)}{s^4}\geq0.
	\end{equation}
	Since $s_t=u_t/u_*$, we get
	\begin{align}\label{4.17}
		E'(t)
		&=\int_\Omega\frac{\Psi'(s)}{u_*}\Delta(\phi(v)u)
		+a\int_\Omega\Psi'(s)s(1-s)\nonumber\\
		&=\int_\Omega\frac{\Psi'(s)}{u_*}\Delta(\phi(v)u)-4a\int_\Omega(s^{-4}-s)(1-s)\\
		&\leq\int_\Omega\frac{\Psi'(s)}{u_*}\Delta(\phi(v)u)-2aE(t)\nonumber
	\end{align}
	by \eqref{4.16}.
	On the other hand, by integration by parts, we have
		\begin{align}
		&\int_\Omega\frac{\Psi'(s)}{u_*}\Delta(\phi(v)u)\nonumber\\
=&-\int_\Omega\frac{\Psi''(s)}{u_*^2}\phi(v)|\nabla u|^2
		-\int_\Omega\frac{\Psi''(s)s}{u_*}\phi'(v)\nabla u\cdot\nabla v\nonumber\\
=&-\int_\Omega\frac{\Psi''(s)\phi(v)}{u_*^2}
		\left|\nabla u+\frac{u_*s\phi'(v)}{2\phi(v)}\nabla v\right|^2+\frac14\int_\Omega\Psi''(s)s^2
		\frac{|\phi'(v)|^2}{\phi(v)}|\nabla v|^2\nonumber\\
\leq& 5\int_\Omega s^{-4}
		\frac{|\phi'(v)|^2}{\phi(v)}|\nabla v|^2.\label{4.18}
	\end{align}
	Fix $r\in(0,\min\{1,\alpha/2\})$, then by assumption \eqref{1.6}, Lemma
	\ref{lem3.5} and Lemma \ref{lemma4.4}, we  can see that there exist $c_i>0$, ($i=1,2$), such that
	\begin{equation}\label{4.19}
		G(t):=\left\|\frac{|\phi'(v(\cdot,t))|^2}{\phi(v(\cdot,t))}
		|\nabla v(\cdot,t)|^2\right\|_{L^\infty(\Omega)}\leq c_1\left\|v^{\alpha-2}|\nabla v|^2\right\|_{L^\infty(\Omega)}
		\leq c_2 e^{-(\alpha-2r)\lambda t}.
	\end{equation}
	Since $\int_\Omega s^{-4}(\cdot,t)\leq E(t)+5|\Omega|$, the combination of  \eqref{4.17}-\eqref{4.19} leads to
	\begin{equation}\label{4.20}
		E'(t)+(2a-5G(t))E(t)\leq25|\Omega|G(t),
	\end{equation}
	and thereby for every fixed  $ \mu_1\in(0,\min\{2a,(\alpha-2r)\lambda\})$  there exists a constant $c_3>0$ such that
	for all $t\geq1$,
\begin{equation}\label{4.21}	E(t)\leq c_3 e^{-\mu_1 t}. \end{equation}

Furthermore, by Theorem~\ref{theorem1.1},
$
\sup_{t>0}\|u(\cdot,t)\|_{L^\infty(\Omega)}<\infty$.
 We therefore set
\[
M:=\max\left\{1,\frac{1}{u_*}
\sup_{t>0}\|u(\cdot,t)\|_{L^\infty(\Omega)}\right\}.
\]
Then \(0<s(x,t)\leq M\) for all
\(x\in\overline{\Omega}\) and \(t>0\).
On the other hand, due to  \(\Psi(1)=\Psi'(1)=0\), Taylor's formula yields
	\[
	\Psi(s)
	=\int_1^s (s-\xi)\Psi''(\xi)\,d\xi
	\geq 10M^{-6}(s-1)^2
	\qquad\mbox{for} \, \,0<s\leq M.
	\]
Therefore
\[
	E(t)=\int_\Omega\Psi(s(\cdot,t))
	\geq 10M ^{-6}\int_\Omega(s(\cdot,t)-1)^2,
	\]
	which, along with \eqref{4.21}, implies that
\eqref{4.15} holds with $\mu:=\frac{\mu_1} 2$ for all \(t>0\) after enlarging \(C\) if necessary.
\end{proof}

We are now in a position to complete the proof of Theorem \ref{theorem1.2}.
\begin{proof}[Proof of Theorem \ref{theorem1.2}]
The estimate \eqref{1.10} follows from Lemma \ref{lemma4.4}.
To prove \eqref{1.11a}, let \(1\le p<\infty\). By Lemma \ref{lem4.5},
there exist \(C>0\) and \(\mu>0\) such that
\[
\left\|u(\cdot,t)-\frac{a}{b}\right\|_{L^2(\Omega)}
\le C e^{-\mu t}.
\]
If \(1\le p<2\), H\"{o}lder's inequality gives
\[
\left\|u(\cdot,t)-\frac{a}{b}\right\|_{L^p(\Omega)}
\le |\Omega|^{1/p-1/2}
\left\|u(\cdot,t)-\frac{a}{b}\right\|_{L^2(\Omega)}
\le C_p e^{-\mu t}.
\]
If \(2\le p<\infty\), Theorem \ref{theorem1.1} and interpolation yield
\[
\left\|u(\cdot,t)-\frac{a}{b}\right\|_{L^p(\Omega)}
\le \left\|u(\cdot,t)-\frac{a}{b}\right\|_{L^\infty(\Omega)}^{1-2/p}
\left\|u(\cdot,t)-\frac{a}{b}\right\|_{L^2(\Omega)}^{2/p}
\le C_p e^{-\frac{2\mu}{p}t}.
\]
Thus \eqref{1.11a} holds with
$
\eta_p=\mu\,(1\le p<2)$, $
\eta_p=\frac{2\mu}{p}\,(p\ge2)
$.

	It remains to prove the weak-$*$ convergence in $L^\infty(\Omega)$. Set $f(t):=u(\cdot,t)-a/b$. Taking $p=1$ in \eqref{1.11a} gives
	$\|f(t)\|_{L^1(\Omega)}\to0$ as $t\to\infty$. Moreover, Theorem \ref{theorem1.1}
	provides $M>0$ such that $\|f(t)\|_{L^\infty(\Omega)}\leq M$ for all
	$t>0$. For any $\zeta\in L^1(\Omega)$, choose
	$\zeta_j\in L^\infty(\Omega)$ satisfying
	$\|\zeta_j-\zeta\|_{L^1(\Omega)}\to0$. Then
	\[
		\left|\int_\Omega f(t)\zeta\right|
		\leq M\|\zeta-\zeta_j\|_{L^1(\Omega)}
		+\|\zeta_j\|_{L^\infty(\Omega)}\|f(t)\|_{L^1(\Omega)}.
	\]
	Letting first $t\to\infty$ and then $j\to\infty$, we conclude that
	$\int_\Omega f(t)\zeta\to0$ for every $\zeta\in L^1(\Omega)$.
	This proves \eqref{1.12}.\end{proof}
\vspace{.2cm}
\noindent{\bf Conflict of interest}:
The authors declare that they have no conflicts of interest to this work.

\noindent{\bf Ethics approval}:
Ethics approval is not required for this research.

\noindent{\bf Data availability statement}: Data sharing is not applicable to this article. 

\noindent{\bf Acknowledgments}: This work was supported by the National Natural Science Foundation of
China under Grant (12071030, 12171498, 12271186).


\begin{thebibliography}{99}
	




\bibitem{Bellomo}
N. Bellomo, A. Bellouquid, Y. S. Tao, M. Winkler, Towards a mathematical theory of Keller-Segel models of pattern formation in biological tissues, Math. Models Methods Appl. Sci., 25 (2015), 1663-1763.

\bibitem{CKL}
Y. Choi, Y. Kim, J. Lee, Dweller-wanderer interaction for chemotactic phenomena, SIAM J. Appl. Math., 85(5) (2025), 2167-2189.

\bibitem{Colin}
R. Colin, B. Ni, L. Laganenka, V. Sourjik, Multiple functions of flagellar motility and chemotaxis in bacterial physiology,
FEMS Microbiology Reviews,  45  (2021),  1--19.


\bibitem{Fu(2012)}
X. Fu, L. H. Tang,  C. Liu, J. D. Huang, T. Hwa, P. Lenz, Stripe formation in bacterial systems with density-suppressed motility, Phys. Rev. Lett., 108 (2012), 198102.

\bibitem{Fujie2021}
K. Fujie, J. Jiang, Comparison methods for a Keller-Segel-type model of pattern formations with density-suppressed motilities, Calc. Var. Partial Differ. Equ., 60 (2021), 92.

\bibitem{Gude2020}
S. Gude, E. Pin\c{c}e, K. M. Taute, A.-B. Seinen, T. S. Shimizu, S. J. Tans, Bacterial coexistence driven by motility and spatial competition,
Nature, 578 (2020), 588--592.





\bibitem{Jin2018}
H. Jin, Y. Kim, Z. Wang, Boundedness, stabilization and pattern formation driven by density suppressed motility, SIAM J. Appl. Math., 78 (2018), 1632-1657.

\bibitem{Jin2020}
H. Jin, Z. Wang, Critical mass on the Keller-Segel system with signal-dependent motility, Proc. Am. Math. Soc., 148 (2020), 4855-4873.


\bibitem{Kawasaki}
K. Kawasaki, A. Mochizuki, M. Matsushita, T. Umeda, N. Shigesada, Modeling spatiotemporal patterns generated by bacillus subtilis, J. Theor. Biol., 188 (1997), 177-185.


\bibitem{KellerSegel(1970)}
E. F. Keller, L. A. Segel, Initiation of slime mold aggregation viewed as an instability, J. Theor. Biol., 26 (1970), 399-415.


\bibitem{Lankeit Wang(2017)}
J. Lankeit, Y. Wang, Global existence, boundedness and stabilization in a high-dimensional chemotaxis system with consumption, Discrete Contin. Dyn. Syst., 37 (2017), 6099-6121.

\bibitem{Lankeit}
J. Lankeit, M. Winkler, Depleting the signal: Analysis of chemotaxis-consumption models--A survey, Stud. Appl. Math., 151 (2023), 1197-1229.





\bibitem{Leyva(2013)}
J. F. Leyva, C. M\'{a}laga, R. G. Plaza, The effects  of  nutrient chemotaxis  on bacterial aggregation patterns with non-linear degenerate cross diffusion, Physica A, 392 (2013), 5644-5662.



\bibitem{Li Zhao2021}
D. Li, J. Zhao, Global boundedness and large time behavior of solutions to a chemotaxis-consumption system with signal-dependent motility, Z. Angew. Math. Phys., 72 (2021), 57.

\bibitem{Li Winkler2022}
G. Li, M. Winkler, Relaxation in a Keller-Segel-consumption system involving signal-dependent motilities, Commun. Math. Sci., 21(2) (2023), 299-322.

\bibitem{Li Winkler2022R}
G. Li, M. Winkler, Refined regularity analysis for a Keller-Segel-consumption system involving signal-dependent motilities, Appl. Anal., 103 (2024), 45-64.

\bibitem{Li Wang Pan2021}
X. Li, L. Wang, X. Pan, Boundedness and stabilization in the chemotaxis consumption model with signal-dependent
motility, Z. Angew. Math. Phys., 72 (2021), 1-18.


\bibitem{Liu2011}
C. Liu, X. Fu, L. Liu, X. Ren, C. K. L. Chau, S. Li, L. Xiang, H. Zeng, G. Chen, L.-H. Tang, P. Lenz, X. Cui, W. Huang, T. Hwa, J.-D. Huang, Sequential establishment of stripe patterns in an expanding cell population, Science, 334 (2011), 238-241.



\bibitem{Lv2020}
W. Lv, Global existence for a class of chemotaxis-consumption systems with signal-dependent motility and generalized logistic source, Nonlinear Anal., Real World Appl., 56 (2020), 103160.




\bibitem{Maini}
P. Maini, M. Myerscough, K. Winters, J. Murray, Bifurcating spatially heterogeneous solutions in a chemotaxis model for biological pattern generation, Bull. Math. Biol., 53 (1991), 701-719.





\bibitem{Osaki(2002)}
K. Osaki, T. Tsujikawa, A. Yagi, M. Mimura, Exponential attractor for a chemotaxis-growth system of equations, Nonlinear Anal.,
 51 (2002), 119-144.

\bibitem{Tao2011}
Y. Tao,
Boundedness in a chemotaxis model with oxygen consumption by bacteria,
J. Math. Anal. Appl., 381(2) (2011), 521--529.

\bibitem{Tao Winkler(2012)}
Y. Tao, M. Winkler, Eventual smoothness and stabilization of large-data solutions in a three-dimensional chemotaxis system with consumption of chemoattractant, J. Differ. Equ., 252 (2012), 2520-2543.


\bibitem{Tao Winkler(2017)}
Y. Tao, M. Winkler, Effects of signal-dependent motilities in a Keller-Segel-type reaction-diffusion system, Math. Models Methods Appl. Sci., 27 (2017), 1645-1683.


\bibitem{Wang2023JDE}
L. Wang,  Global dynamics for a chemotaxis consumption system with signal-dependent motility and logistic source, J. Differ. Equ., 348 (2023), 191-222.

\bibitem{Wang2025}
L. Wang, C. Mu, Global solvability and eventual smoothness in a degenerate
migration-consumption system with logistic source, Calc. Var. Partial Differ. Equ., 64 (2025), 264.

\bibitem{Wang2026}
L. Wang, C. Mu, R. Huang,
Global existence and eventual smoothness of a nutrient consumption system involving local sensing and proliferation,
J. Differ. Equ., 453 (2026), 113913.


\bibitem{Winkler(2010)}
M. Winkler, Aggregation vs. global diffusive behavior in the higher-dimensional Keller-Segel model, J. Differ. Equ., 248 (2010), 2889-2905.

\bibitem{Winkler(2010CPDE)}
M. Winkler, Boundedness in the higher-dimensional parabolic-parabolic chemotaxis system with logistic source, Commun. Partial Differ. Equ., 35  (2010), 1516-1537.



\bibitem{WinklerDCDSB} M. Winkler, Approaching logarithmic singularities in quasilinear chemotaxis-consumption systems with signal-dependent sensitivities, Discrete Contin. Dyn. Syst. Ser. B, 27 (2022), 6565-6587.


\bibitem{Winkler2022Moser}
M. Winkler, Application of the Moser-Trudinger inequality in the construction of global solutions to a strongly degenerate migration model, Bull. Math. Sci., 13 (2023), 2250012.


\bibitem{WinklerZAMP}
M. Winkler, Global generalized solvability in a strongly degenerate taxis-type parabolic system modeling migration-consumption interaction, Z. Angew. Math. Phys., 74 (2023), 32.


\bibitem{Winkler(2023Non)}
M. Winkler, Stabilization despite pervasive strong cross-degeneracies in a nonlinear
diffusion model for migration-consumption interaction, Nonlinearity, 36 (2023), 4438-4469.


\bibitem{WinklerANS}
M. Winkler, A degenerate migration-consumption model in domains of arbitrary dimension, Adv. Nonlinear Stud., 24 (2024), 592-615.

\bibitem{Winkler2024}
M. Winkler, A quantitative strong parabolic maximum principle and application to a taxis-type migration consumption model involving signal-dependent degenerate diffusion,
Ann. Inst. H. Poincar\'e Anal. NonLin\'eaire, 41 (2024), 95-127.

\bibitem{WinklerJDE}
M. Winkler, $L^{\infty}$ bounds in a two-dimensional doubly degenerate nutrient taxis system with general cross-diffusive flux, J. Differ. Equ., 400 (2024), 423-456.


\bibitem{Xiao2023}
Y. Xiao, J. Jiang, Global existence and uniform boundedness in a fully parabolic Keller-Segel system with non-monotonic signal-dependent motility, J. Differ. Equ., 354 (2023), 403-429.


\bibitem{Xiao2024}
Y. Xiao, J. Jiang, Prevention of infinite-time blowup by slightly super-linear degradation in a Keller-Segel system with density-suppressed motility, Nonlinearity, 37 (2024), 095007.


\bibitem{Coon2017}
C. Yoon, Y. Kim, Global existence and aggregation in a Keller-Segel model with Fokker-Planck diffusion, Acta Appl. Math., 149 (2017), 101-123.


\bibitem{Zhang(2021)}
W. Zhang, Stabilization in a high-dimensional chemotaxis system
involving arbitrary superlinear degradation, Math. Meth. Appl. Sci., 44 (2021), 11444-11455.

\end{thebibliography}
\end{document}